\newtheorem{theorem}{Theorem}[section]
\newtheorem{lemma}{Lemma}[section]
\theoremstyle{definition}
\newcommand{\FHol}{\mathcal{F}_{\textnormal{H\"{o}l}}} %
\newcommand{\Fselfsim}{{\mathcal{F}_{\textnormal{self-sim}}}} %
\newcommand{\Fselfsimalt}{{\overline{\mathcal{F}}_{\textnormal{self-sim}}}} %
\newcommand{\FGN}{{\mathcal{F}_{\textnormal{GN}}}} %
\newcommand{\length}{{\textnormal{length}}}
\newcommand{\dtest}{{d_{\textnormal{test}}}}
\newcommand{\Ksupp}{{C_K}}
\newcommand{\cvalconst}{{\bar c_K}}
\newcommand{\psisupp}{{C_\psi}}
\title{Adaptation Bounds for Confidence Bands under Self-Similarity}
\author{Timothy B. Armstrong\thanks{email: timothy.armstrong@yale.edu.  Thanks
    to Richard Nickl helpful comments and discussion.}  \\
Yale University}
\date{\today}
\begin{document}

\maketitle

\begin{abstract}
We derive bounds on the scope for a confidence band to adapt to the unknown
regularity of a nonparametric function that is observed with noise, such as a regression function or density, under the
self-similarity condition proposed by Gin\'{e} and Nickl \cite{gine_confidence_2010}.  
We find that adaptation can only be achieved up to a term that depends
on the choice of the constant used to define self-similarity, and that this term becomes
arbitrarily large for conservative choices of the self-similarity constant.
We construct a confidence band that achieves this bound, up to a constant term
that does not depend on the self-similarity constant.
Our results suggest that care must be taken in choosing and interpreting the
constant that defines self-similarity, since the dependence of adaptive confidence
bands on this constant cannot be made to disappear asymptotically.
\end{abstract}

\section{Introduction}

Consider the problem of constructing a confidence band for a function that is
observed with noise, such as a regression function or density.  It will be convenient to state our
results in the white noise model
\begin{align*}%
  Y(t)=\int_0^t f(s)\, ds+\sigma_n W(t),
  \quad \sigma_n=\sigma/\sqrt{n} %
\end{align*}
which maps to the regression or density setting 
with $n$ playing the
role of sample size \cite{brown_asymptotic_1996,nussbaum_asymptotic_1996}.
Here $f:\mathbb{R}\to\mathbb{R}$ is an unknown function, $W(t)$ is a standard
Brownian motion and $Y(t)$ is observed with $\sigma_n$ treated as known.  
To obtain good estimates and confidence
bands, one must impose some regularity on the function $f$.  This is typically
done by assuming that $f$ is in a derivative smoothness class, such as the
H\"{o}lder class $\FHol(\gamma,B)$, which
formalizes the notion that the $\gamma$th derivative is bounded by $B$:
\begin{align*}%
  \FHol(\gamma,B)=\{f \colon \textnormal{ for all }t,t'\in \mathbb{R},  |f^{(\lfloor \gamma\rfloor)}(t)-f^{(\lfloor \gamma\rfloor)}(t')|\le B |t-t'|^{\gamma-\lfloor \gamma\rfloor}\}
\end{align*}
where $\lfloor \gamma \rfloor$ denotes the greatest integer strictly less than $\gamma$.
We are interested in constructing a confidence band for $f$ on an interval,
which we take to be $[0,1]$.
A confidence band is a collection of random intervals 
$\mathcal{C}_n(x)=\mathcal{C}_n(x;Y)$ for $x\in[0,1]$ that depend on the data $Y$ observed at noise level $\sigma_n=\sigma/\sqrt{n}$.
Following the standard definition, we say that $\mathcal{C}_n(\cdot)$ is a
confidence band with coverage $1-\alpha$ over the class $\mathcal{F}$ if
\begin{align}\label{coverage_eq}
  \inf_{f\in\mathcal{F}}P_f\left(\text{for all }x\in [0,1],\, f(x)\in \mathcal{C}_n(x)\right)
  \ge 1-\alpha
\end{align}
where $P_f$ denotes probability when $Y(t)$ is drawn according to $f$.  Although we focus on the interval $[0,1]$, to avoid boundary issues, we will
assume that $Y(t)$ is observed on
an interval $[-\eta,1+\eta]$ for some $\eta>0$.

Using knowledge of the class $\FHol(\gamma,B)$, one can construct estimators and confidence bands that are near-optimal in a minimax sense.  In practice, however, it can
be difficult to specify $\gamma$ and $B$ a priori.  This has led to the
paradigm of adaptation: one seeks estimators and confidence bands that are
nearly optimal for all $\gamma$ and $B$ in some range
without a priori knowledge of $\gamma$ or $B$.  Such procedures 
are called ``adaptive.''
Unfortunately, while it is possible to construct estimators that adapt to the
unknown value of $\gamma$ and $B$,
(see \cite{tsybakov_pointwise_1998} and references therein), it follows from \cite{low_nonparametric_1997} that adaptive confidence band 
construction over derivative smoothness classes is impossible.

To recover the possibility of adaptive confidence band construction,
\cite{gine_confidence_2010} propose
an additional condition known as ``self-similarity'' (see also \cite{picard_adaptive_2000}), which uses a
constant $\varepsilon>0$ to rule out functions such that the level of regularity
is statistically difficult to detect.  Imposing these additional conditions
leads to a class $\Fselfsim(\gamma,B,\varepsilon)\subsetneq
\FHol(\gamma,B)$.
\cite{gine_confidence_2010} derive confidence bands that are rate-adaptive to
the unknown parameter $\gamma$ over these smaller classes, and they show that the set
$\FHol(\gamma,B)\backslash
\cup_{\varepsilon>0}\Fselfsim(\gamma,B,\varepsilon)$ of functions ruled out by
this assumption (as $\varepsilon\to 0$) is small in a certain topological sense.
A subsequent literature has further examined the use of self-similarity and
related assumptions in forming adaptive confidence bands (see references below).

These results provide a promising approach to constructing a
confidence band such that the width reflects the unknown
regularity $\gamma$ of the function $f$.
However, these confidence bands require a priori knowledge of other 
regularity parameters, including
$\varepsilon$, either explicitly or through unspecified constants and
sequences that must be chosen in a way that depends on $\varepsilon$ 
in order to guarantee coverage for a given sample size or noise level.
Furthermore, these choices have a first order asymptotic effect on the width of
the confidence band, and making an asymptotically conservative choice by taking
$\varepsilon=\varepsilon_n\to 0$ leads to a slightly slower rate of convergence.
This has led to concern about whether self-similarity assumptions can lead
to a ``practical'' approach to confidence band construction (see, for example, the discussion
on pp. 2388-2389 of \cite{hoffmann_adaptive_2011}): while
self-similarity removes the need to specify the order $\gamma$ of the
derivative, currently available methods still require specifying other
regularity parameters.
Can one construct a confidence band that is fully adaptive without specifying
any of the regularity parameters $\gamma$, $B$ or $\varepsilon$?

An implication of the results in this paper is that it is impossible to achieve
such a goal.
In particular, we show that a confidence band that is adaptive over
classes $\Fselfsim(\gamma,B,\varepsilon)$ over a range of $\gamma$ or $B$ must
necessarily pay an adaptation penalty proportional to
$\varepsilon^{-1/(2\gamma+1)}$.  As a consequence, adaptive confidence bands in
self-similarity classes require explicit specification of the self-similarity
constant $\varepsilon$, and taking
$\varepsilon=\varepsilon_n\to 0$ requires paying a penalty in the rate.
On a more positive note, once $\varepsilon$ is given, we construct a confidence
band that is ``practical'' in the sense that it is valid for a fixed
sample size or noise level in Gaussian settings, and it does not depend
on additional unspecified constants or sequences once $\varepsilon$ is given.

To describe these results formally,
let $\mathcal{I}_{n,\alpha,\mathcal{F}}$ denote the set of confidence bands
that satisfy the coverage requirement (\ref{coverage_eq}).
Subject to this coverage requirement, we compare worst-case length of $\mathcal{C}_n$ over a possibly smaller class $\mathcal{G}$.  Letting $\length(\mathcal{A})=\sup \mathcal{A}-\inf\mathcal{A}$ denote the length of a set $\mathcal{A}$, let
\begin{align*}
  R_\beta(\mathcal{C}_n;\mathcal{G})
  =\sup_{f\in \mathcal{G}} q_{\beta,f}\left(\sup_{x\in[0,1]}\length(\mathcal{C}_n(x))\right)
\end{align*}
where $q_{\beta,f}$ denotes the $\beta$ quantile when $Y\sim f$.
Following \cite{cai_adaptation_2004}, define
\begin{align*}
  R^*_{n,\alpha,\beta}(\mathcal{G},\mathcal{F})
  =\inf_{\mathcal{C}_n(\cdot)\in \mathcal{I}_{n,\alpha,\mathcal{F}}} R_\beta(\mathcal{C}_n;\mathcal{G})
\end{align*}
to be the optimal worst-case length over $\mathcal{G}$ of a band with coverage
over $\mathcal{F}$, where $\mathcal{G}\subseteq\mathcal{F}$.
A minimax confidence band over the set $\mathcal{F}$ is one that achieves the
bound $R^*_{n,\alpha,\beta}(\mathcal{F},\mathcal{F})$.
Given a family $\mathcal{F}(\tau)$ of function classes indexed by a regularity
parameter $\tau\in\mathcal{T}$, the goal of adaptive confidence band construction
is to find a single confidence band $\mathcal{C}_n(\cdot)$ that is close to
achieving this bound for each $\mathcal{F}(\tau)$, while also maintaining coverage
$1-\alpha$ for each
$\mathcal{F}(\tau)$ (so that
$\mathcal{C}_n(\cdot)\in\mathcal{I}_{n,\alpha,\cup_{\tau\in\mathcal{T}}\mathcal{F}(\tau)}$).
Suppose that a confidence band $\mathcal{C}_n(\cdot)\in\mathcal{I}_{n,\alpha,\cup_{\tau\in\mathcal{T}}\mathcal{F}(\tau)}$ achieves this goal up to a
factor $A_n(\tau)$:
\begin{align*}
  R_\beta(\mathcal{C}_n;\mathcal{F}(\tau)) 
  \le A_n(\tau)R^*_{n,\alpha,\beta}(\mathcal{F}(\tau),\mathcal{F}(\tau))
  \text{ all } \tau\in\mathcal{T}
\end{align*}
(in the present setting, $A_n(\tau)$ will not depend on $\alpha$ or $\beta$ once
$n$ is large enough).
We will call such a band \emph{adaptive} to $\tau$ up to the \emph{adaptation
  penalty} $A_n(\tau)$.
If the adaptation penalty is bounded as a function of $n$, we will say that the
confidence band is (rate) adaptive (this corresponds to what
\cite{cai_adaptation_2004} call ``strongly adaptive'').
Note that
$R^*_{n,\alpha,\beta}(\mathcal{F}(\tau),\cup_{\tau\in\mathcal{T}}\mathcal{F}(\tau))\allowbreak
/R^*_{n,\alpha,\beta}(\mathcal{F}(\tau),\mathcal{F}(\tau))$
provides a lower bound for the adaptation penalty of any confidence band $\mathcal{C}_n(\cdot)$.

\begin{sloppypar}
For H\"{o}lder classes,
$R^*_{n,\alpha,\beta}(\FHol(\gamma,B),\FHol(\gamma,B))$ decreases at the
rate $(n/\log n)^{-\gamma/(2\gamma+1)}$.
A confidence band that is rate adaptive to $\gamma$
would achieve this rate simultaneously for all $\gamma$ in some set
$[\underline\gamma,\overline\gamma]$ while maintaining coverage over $\cup_{\gamma\in[\underline\gamma,\overline\gamma]}\FHol(\gamma,B)$.
However, as noted above, the results of \cite{low_nonparametric_1997} imply
that this is impossible.
Indeed,
$R^*_{n,\alpha,\beta}(\FHol(\gamma,B),\cup_{\gamma'\in[\underline\gamma,\overline\gamma]}\FHol(\gamma',B))$
decreases at the rate $(n/\log n)^{-\underline\gamma/(2\underline\gamma+1)}$ for
each $\gamma\in[\underline\gamma,\overline\gamma]$, so the adaptation penalty
for H\"{o}lder classes is of order
$(n/\log n)^{\gamma/(2\gamma+1)-\underline\gamma/(2\underline\gamma+1)}$, which is quite severe. 
\end{sloppypar}

To salvage the possibility of adaptation, \cite{gine_confidence_2010} propose
augmenting the H\"{o}lder condition with an auxiliary condition.
Let $K:\mathbb{R}^2\to \mathbb{R}$ be a function, called a kernel, such that
$x\mapsto K(t,x)$ is of bounded variation for each $t$.  Let
$K_j(t,x)=2^jK(2^jt,2^jx)$ for any integer $j$, and let
$\hat f(t,j)=\int K_j(t,x)\, dY(x)$.
This allows for convolution kernels $K(t,x)=\tilde K(t-x)$ (in which case
$2^{-j}$ is the bandwidth) and wavelet projection kernels $K(t,x)=\sum_k
\phi(t-k)\phi(x-k)$
(in which case $\phi$ is the father wavelet and $j$ is the resolution level).
Let $K_jf(t)=\int K_j(t,x)f(x)\, dx$.
Note that $E_f\hat f(t,j)=K_jf(t)$, where $E_f$ denotes expectation when $Y(x)$
is drawn according to $f$, so that the bias is given by $K_jf(t)-f(t)$.
Under appropriate conditions on $K$, an upper bound on this bias for functions
in $\FHol(\gamma,B)$ follows from standard calculations
(see \cite[][Ch. 4]{gine_mathematical_2015}):
\begin{align}\label{tildeCgamma_bound_eq}
  \sup_{t\in[0,1]} |K_jf(t)-f(t)|\le \tilde C B 2^{-j\gamma}
\end{align}
for some constant $\tilde C$.
\cite{gine_confidence_2010} impose such a bound on bias directly, along
with an analogous lower bound.  For $\underline j,b_1,b_2>0$, let
$\FGN(\gamma,b_1,b_2)=\FGN(\gamma,b_1,b_2;K,\underline j)$ denote the set of
functions $f$ satisfying Condition 3 of \cite{gine_confidence_2010}: for all integers
$j\ge\underline j$,
\begin{align}\label{GN_j_condition}
b_12^{-j\gamma}\le \sup_{t\in [0,1]}\left|K_jf(t)-f(t)\right|
\le b_2 2^{-j\gamma}.
\end{align}
Since we will also be imposing H\"{o}lder conditions, which, as noted above,
satisfy the upper bound with $b_2=\tilde C B$, it is natural to
make the lower bound proportional to $B$ as well, by taking $b_1=\varepsilon B$
for some $\varepsilon>0$.  To this end, let
$\Fselfsim(\gamma,B,\varepsilon)=\Fselfsim(\gamma,B,\varepsilon;K,\underline j)$
be the set of functions in $\FHol(\gamma,B)$ such that the lower bound in
(\ref{GN_j_condition}) holds with $b_1=\varepsilon B$ for all integers $j\ge
\underline j$.
By the discussion above, this is equivalent to defining
$\Fselfsim(\gamma,B,\varepsilon;K,\underline j)=\FHol(\gamma,B)\cap
\FGN(\gamma,\varepsilon B,C B;K,\underline j)$ for any $C\ge \tilde C$.
We will refer to $\varepsilon$ as a ``self-similarity constant,'' and we will
call the class $\Fselfsim$ a ``self-similarity class.''
Note that, by defining $\varepsilon$ to be (up to a constant) the ratio of the
upper and lower bounds on the bias, we are separating the role of
self-similarity and the smoothness constant.
In particular, the self-similarity constant is scale invariant.
See Section \ref{alternative_self_sim_sec} for alternative formulations of the
notion of a ``self-similarity constant.''

\begin{sloppypar}
Our main results are efficiency bounds that have implications for the adaptation
penalty $A_n(\gamma,B)$ for confidence bands that adapt to the
regularity parameters $(\gamma,B)$ over a rich enough set $\mathcal{T}$ in the
self-similarity class $\Fselfsim(\varepsilon,\gamma,B)$.  In particular, our
results imply the existence of a constant $C_{*}>0$ such that, for
large enough $n$, the adaptation penalty for any confidence band must satisfy
the lower bound
$C_*\varepsilon^{-1/(2\gamma+1)}<A_n(\gamma,B)$.  Furthermore, we
construct a confidence band with adaptation penalty
$A_n(\gamma,B)<C^*\varepsilon^{-1/(2\gamma+1)}$, where $C^*<\infty$
(the constants $C_*$ and $C^*$ do not depend on $\varepsilon$ but may depend on the set $\mathcal{T}$ over which
adaptation is required).
For the lower bounds, we consider separately the cases of adaptation to $B$ with
$\gamma$ known (i.e. $\mathcal{T}=\gamma\times [\underline B,\overline B]$) and
adaptation to $\gamma$ with $B$ known (i.e.
$\mathcal{T}=[\underline\gamma,\overline\gamma]\times B$).  In both cases, the
lower bound gives the same $\varepsilon^{-1/(2\gamma+1)}$ term.
We also consider the possibility of ``adapting to the self-similarity constant''
and find that that this is not possible: if we allow $\varepsilon$ to be in some
set $[\underline\varepsilon,\overline\varepsilon]$, then we obtain a lower bound
proportional to $\underline\varepsilon^{-1/(2\gamma+1)}$.
\end{sloppypar}

Our results relate to the literature deriving confidence bands under
self-similarity conditions.
\cite{gine_confidence_2010} propose a confidence band
that has coverage over $f\in\Fselfsim(\gamma,B,\varepsilon_n)$ for a range of $(\gamma,B)$, where
$\varepsilon_n\to 0$ with the sample size, and they show that it is adaptive up
to a penalty $A_n(\gamma,B)$ where $A_n(\gamma,B)\to\infty$ slowly with the sample size
$n$.  Our lower bounds show that a penalty of this form is unavoidable if one
takes $\varepsilon_n\to 0$.  \cite{bull_honest_2012} and \cite{chernozhukov_anti-concentration_2014}
propose confidence bands with coverage over self-similarity classes with
$\varepsilon$ fixed, and they show that these confidence bands are fully rate
adaptive (i.e. the adaptation penalty $A_n(\gamma,B)$ is bounded as $n$ increases).
Checking whether the adaptation penalty for these confidence bands takes the
optimal form $C^*\varepsilon^{-1/(2\gamma+1)}$ for small $\varepsilon$ appears
to be difficult, and we derive upper bounds using a different confidence band
(although the confidence band we propose builds on ideas in these papers; see
Section \ref{discussion_sec}).

To our knowledge, this paper is the first to derive lower bounds
on adaptation constants for
confidence bands under
self-similarity conditions.
A related question, addressed by \cite{hoffmann_adaptive_2011}
and \cite{bull_honest_2012}, is whether the self-similarity conditions
themselves can be weakened.
These papers derive lower bounds showing that certain ways of relaxing
self-similarity necessarily lead to a penalty in the rate, and our finding that
taking $\varepsilon=\varepsilon_n\to 0$
requires paying such a penalty
complements these results.
In addition, a large literature has considered adaptive confidence sets in related settings under conditions that are similar
to the self-similarity condition used by \cite{gine_confidence_2010}.
In the Gaussian
sequence setting, \cite{szabo_frequentist_2015} propose a condition called a
``polished tail'' condition.
They use this condition to show frequentist coverage of adaptive Bayesian credible sets
(see also \cite{sniekers_adaptive_2015,van_der_pas_uncertainty_2017}).
Other applications of self-similarity type conditions include
high dimensional sparse regression \cite{nickl_confidence_2013},
density estimation on the sphere \cite{kueh_locally_2012},
locally adaptive confidence bands \cite{patschkowski_locally_2019},
binary regression \cite{mukherjee_optimal_2018}
and
$L_p$ confidence sets
\cite{bull_adaptive_2013,carpentier_honest_2013,nickl_sharp_2016} (in contrast
to our setting where $p=\infty$, some range of adaptation is possible even
without self-similarity when $p<\infty$; see \cite{juditsky_nonparametric_2003,robins_adaptive_2006,cai_adaptive_2006}).
Self-similarity is also related to ``signal strength'' conditions used in other
settings, such as ``beta-min'' conditions used to study variable selection in
high dimensional regression (see \cite{buhlmann_statistics_2011}, Section 7.4).

Our lower bounds apply immediately
to confidence bands with coverage under any set $\mathcal{F}$ that weakens
the self-similarity conditions in \cite{gine_confidence_2010}.
This includes,
for certain ranges of regularity constants,
the conditions used in
\cite{bull_honest_2012} and, for adaptation to $B$ with $\gamma$ fixed,
\cite{hoffmann_adaptive_2011}.
\cite{szabo_frequentist_2015} show that their conditions are weaker than a
natural definition of self-similarity in the Gaussian sequence setting.
A full characterization of upper and lower bounds in these and other related
settings is left for future research.

\section{Adaptation Bounds for Self-Similar Functions}\label{adaptation_bounds_sec}

This section states our main results.
We first give lower bounds for
adaptation, separating the role of adaptation to the constant $B$ and the
exponent $\gamma$.  We then construct a confidence band that achieves these
bounds, up to a constant that does not depend on the self-similarity constant $\varepsilon$, simultaneously for all $\gamma$ and $B$ on bounded intervals.
Finally, we provide lower bounds for an alternative formulation of the problem,
and a discussion of our results.

Before stating the formal results, we give a heuristic explanation of the
bounds.
Self-similarity allows for adaptation by bounding the bias at a scale $j_1$
using an estimate of the bias at a different scale $j_2$:
the bias
$\sup_{t\in
  [0,1]}\left|K_{j_1}f(t)-f(t)\right|$ of $\hat f(t,j_1)$
is bounded by
$\varepsilon^{-1} \tilde C 2^{-\gamma(j_1-j_2)} \sup_{t\in
  [0,1]}\left|K_{j_2}f(t)-f(t)\right|$.
If we can get an estimate of this upper bound that
converges more quickly than the estimation error in $\hat f(t,j_1)$ (which turns
out to be possible by taking $j_2$ to increase slightly more slowly than $j_1$),
then we can treat this upper bound as known.  Since $\sup_{t\in
  [0,1]}\left|K_{j_2}f(t)-f(t)\right|$ is
bounded by $\tilde C B 2^{-\gamma j_2}$, this is as good as using the bound
$\varepsilon^{-1} \tilde C^2 B 2^{-\gamma j_1}$
on the bias of $\hat f(t,j_1)$.
Choosing $j_1$ to balance this term with the estimation error in
$\sup_{t\in[0,1]}|\hat f(t,j_1)-K_{j_1}(f)|$ then gives the rate with the
$\varepsilon^{-1/(2\gamma+1)}$ factor.
Note that this gives the same rate and constant as using prior knowledge of the
H\"{o}lder class, but replacing $B$ with $\varepsilon^{-1}B$, up to a constant
that does not depend on $\varepsilon$, $\gamma$ or $B$.

The constructive upper bound in Section \ref{achieving_bounds_sec_main} below uses a confidence band that
formalizes these ideas.  The lower bounds in Section \ref{lower_bounds_sec} show formally that
no further information can be used to improve this confidence band, up to
factors that do not depend on $\varepsilon$, $\gamma$ or $B$.

\subsection{Lower Bounds}\label{lower_bounds_sec}

We now give bounds for adaptation over the classes
$\Fselfsim(\gamma,B,\varepsilon)$.  Proofs of the lower bounds in this section
are given in Section \ref{lower_bounds_proofs_sec}.
We impose the following conditions on the kernel $K$:
\begin{align}\label{kernel_lower_bound_assump}
  \parbox{4.5in}{there exists $\Ksupp<\infty$ such that $K(y,x)=0$ for $|x-y|>\Ksupp$ and, for all $k\in\mathbb{Z}$ and $x,y\in\mathbb{R}$, $K(y,x)=K(y-k,x-k)$.}
\end{align}
These conditions hold for convolution kernels with finite
support, and for wavelet projection kernels for which the father wavelet has
bounded support.

We first consider adaptation to the constant $B$.

\begin{theorem}\label{constant_adaptation_thm_general_kernel}
  Let $\gamma>0$ and let
  $0<2\alpha<\beta<1$.  Let $K$ be a kernel satisfying
  (\ref{kernel_lower_bound_assump}).  There exists $\underline j_{K,\gamma}$,
  $C_{K,\gamma,*}>0$ and $\eta_{K,\gamma}>0$ such that, for any
  $0<\underline B\le B\le \overline B$,
  $\varepsilon\le
  \varepsilon'<\eta_{K,\gamma}$ and $\underline\ell\ge \underline j_{K,\gamma}$,
  \begin{align*}
    &R^*_{n,\alpha,\beta}(\Fselfsim(\gamma,B,\varepsilon';K,\underline\ell), 
\cup_{B'\in[\underline B,\overline
  B]}\Fselfsim(\gamma,B',\varepsilon;K,\underline \ell))  \\
    &\ge (1+o(1))C_{K,\gamma,*}\min\{\varepsilon^{-1}B,\overline B\}^{1/(2\gamma+1)}\left(\sigma_n^2\log (1/\sigma_n)\right)^{\gamma/(2\gamma+1)}.
  \end{align*}
\end{theorem}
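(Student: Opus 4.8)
\textit{Proof idea.} The plan is the usual reduction of a confidence‑band lower bound to a many‑point testing problem, the one genuinely new ingredient being a perturbation family that stays inside a self‑similarity class. Fix any band $\mathcal{C}_n\in\mathcal{I}_{n,\alpha,\mathcal{F}}$ with $\mathcal{F}=\cup_{B'\in[\underline B,\overline B]}\Fselfsim(\gamma,B',\varepsilon;K,\underline\ell)$, and write $\mathcal{G}=\Fselfsim(\gamma,B,\varepsilon';K,\underline\ell)$. Since $R^*_{n,\alpha,\beta}(\mathcal{G},\mathcal{F})\ge R_\beta(\mathcal{C}_n;\mathcal{G})\ge q_{\beta,f_0}\big(\sup_{x}\length(\mathcal{C}_n(x))\big)$ for any $f_0\in\mathcal{G}$, it suffices to produce, for each $n$, an $f_0\in\mathcal{G}$ and functions $f_0+g_{n,1},\dots,f_0+g_{n,M_n}\in\mathcal{F}$ with the $g_{n,k}$ supported on disjoint subintervals of $[0,1]$, of common sup‑norm $a_n$, and such that the mixture $\bar P_n=M_n^{-1}\sum_k P_{f_0+g_{n,k}}$ satisfies $\|\bar P_n-P_{f_0}\|_{\mathrm{TV}}\to 0$. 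Writing $x_k$ for the peak of $g_{n,k}$: coverage of each $f_0+g_{n,k}$ over $\mathcal{F}$ makes the average $M_n^{-1}\sum_k P_{f_0+g_{n,k}}\big((f_0+g_{n,k})(x_k)\notin\mathcal{C}_n(x_k)\big)\le\alpha$; on the event that $f_0$ is covered everywhere (probability $\ge1-\alpha$ under $P_{f_0}$) and that $\sup_x\length(\mathcal{C}_n(x))<a_n$, every $f_0+g_{n,k}$ must be missed at $x_k$, so (passing through the mixture/random‑index representation of $\bar P_n$) that event has $\bar P_n$‑probability $\le\alpha$, hence $P_{f_0}$‑probability $\le\alpha+o(1)$; combining, $P_{f_0}\big(\sup_x\length(\mathcal{C}_n(x))\ge a_n\big)\ge1-2\alpha-o(1)$, and since $2\alpha<\beta$ this forces $q_{\beta,f_0}(\cdot)\ge a_n$ for $n$ large. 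So everything reduces to a construction with $a_n=(1+o(1))C_{K,\gamma,*}\min\{\varepsilon^{-1}B,\overline B\}^{1/(2\gamma+1)}(\sigma_n^2\log(1/\sigma_n))^{\gamma/(2\gamma+1)}$.

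For $f_0$ I would use a lacunary series adapted to $K$: fix a compactly supported test function $\psi$ (the mother wavelet when $K$ is a wavelet projection kernel; a compactly supported H\"older‑$\gamma$ function with enough vanishing moments when $K$ is a convolution kernel) and set $f_0(x)=\lambda_0 B\sum_{m\ge 0}2^{-(\underline\ell+m\rho)\gamma}\sum_{k}\psi\big(2^{\underline\ell+m\rho}x-k\big)$, with the gap $\rho=\rho(\gamma)$ large enough that the two‑sided estimate $\underline c_K B 2^{-j\gamma}\le \sup_{t\in I}\lvert K_j f_0(t)-f_0(t)\rvert\le \overline c_K B 2^{-j\gamma}$ holds for every integer $j\ge\underline\ell$ and every interval $I\subseteq[0,1]$ with $\lvert I\rvert\ge 2^{-\underline\ell}$ — the lower bound because the term at the active resolution just above $j$ dominates the tail, the upper bound from (\ref{tildeCgamma_bound_eq}). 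Choosing $\lambda_0$ so that $\lVert f_0\rVert_{\FHol(\gamma)}\le B/2$ puts $f_0\in\mathcal{G}$ for every $\varepsilon'<\eta_{K,\gamma}:=\underline c_K$, after enlarging $\underline j_{K,\gamma}$ to also absorb the interval‑length requirement above (the two‑sided bias estimates are of the standard type in \cite[Ch.~4]{gine_mathematical_2015}).

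For the perturbation I would take $g_{n,k}(x)=a_n\psi(2^{j_1}x-\tau_k)$ with $M_n\asymp 2^{j_1}$ integers $\tau_k$ spaced so the supports are disjoint subintervals of $[0,1]$. The translation invariance in (\ref{kernel_lower_bound_assump}) gives $\sup_t\lvert K_j g_{n,k}-g_{n,k}\rvert\le c\,a_n$ uniformly in $j,k$, while $g_{n,k}$ and $K_j g_{n,k}$ vanish outside a $(\Ksupp 2^{-j_1}+\Ksupp 2^{-j})$‑neighborhood of $\tau_k 2^{-j_1}$; hence for each $j\ge\underline\ell$ the function $f_0+g_{n,k}$ agrees with $f_0$ on some subinterval of $[0,1]$ of length $\ge 2^{-\underline\ell}$ (once $\underline\ell\ge\underline j_{K,\gamma}$ and $n$ is large), so $\sup_t\lvert K_j(f_0+g_{n,k})-(f_0+g_{n,k})\rvert\ge \underline c_K B 2^{-j\gamma}$, which is $\ge \varepsilon B' 2^{-j\gamma}$ provided $B'\le \underline c_K B/\varepsilon$. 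On the other hand $\lVert f_0+g_{n,k}\rVert_{\FHol(\gamma)}\le B/2+\lVert\psi\rVert_{\FHol(\gamma)}a_n 2^{j_1\gamma}$; choosing $a_n 2^{j_1\gamma}$ equal to a fixed positive constant multiple of $\min\{\varepsilon^{-1}B,\overline B\}$ makes $B':=B/2+\lVert\psi\rVert_{\FHol(\gamma)}a_n 2^{j_1\gamma}$ simultaneously lie in $[\underline B,\overline B]$ and satisfy $B'\le\underline c_K B/\varepsilon$, so that $f_0+g_{n,k}\in\Fselfsim(\gamma,B',\varepsilon;K,\underline\ell)\subseteq\mathcal{F}$. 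This is exactly where $\min\{\varepsilon^{-1}B,\overline B\}$ enters: the bump height is capped either by the H\"older budget $\overline B$ of the enveloping class or by the requirement $\varepsilon B'\le \underline c_K B$ that the self‑similarity \emph{lower} bound survive the perturbation.

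Finally I would pin $j_1=j_1(n)$ by balancing against indistinguishability. Because the bumps have disjoint supports, the likelihood ratios $dP_{f_0+g_{n,k}}/dP_{f_0}$ are uncorrelated under $P_{f_0}$, so $\chi^2(\bar P_n\,\|\,P_{f_0})=M_n^{-1}\big(e^{\lVert g_{n,k}\rVert_2^2/\sigma_n^2}-1\big)$, which tends to $0$ exactly when $\lVert g_{n,k}\rVert_2^2/\sigma_n^2\le(1-\delta_n)\log M_n$ for some $\delta_n\downarrow 0$; since $\lVert g_{n,k}\rVert_2^2\asymp a_n^2 2^{-j_1}$ and $\log M_n=(1+o(1))j_1\log 2$ this reads $a_n^2 2^{-j_1}\lesssim(1-\delta_n)\,\sigma_n^2\,j_1\,\log 2\,(1+o(1))$ up to the fixed constant $\lVert\psi\rVert_\infty^2/\lVert\psi\rVert_2^2$. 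Taking this with equality and combining with $a_n 2^{j_1\gamma}\asymp\min\{\varepsilon^{-1}B,\overline B\}$ determines $j_1\log 2=(1+o(1))\tfrac{2}{2\gamma+1}\log(1/\sigma_n)$ (the corrections from $\log M_n$ and the fixed constants being $o(\log(1/\sigma_n))$), and back‑substituting gives $a_n=(1+o(1))C_{K,\gamma,*}\min\{\varepsilon^{-1}B,\overline B\}^{1/(2\gamma+1)}(\sigma_n^2\log(1/\sigma_n))^{\gamma/(2\gamma+1)}$ with $C_{K,\gamma,*}>0$ depending only on $K$, $\gamma$ and $\psi$. The main obstacle is the third paragraph — keeping the perturbed functions inside a \emph{self‑similarity} class at every resolution $j\ge\underline\ell$, not just inside a H\"older ball — which is what makes the kernel support/translation conditions (\ref{kernel_lower_bound_assump}), the thresholds $\underline\ell\ge\underline j_{K,\gamma}$ and $\varepsilon'<\eta_{K,\gamma}$, and the $\min\{\varepsilon^{-1}B,\overline B\}$ truncation unavoidable, and is the one feature that distinguishes this argument from the classical H\"older‑class lower bound.
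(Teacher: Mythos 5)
Your plan follows the paper's proof in all essentials: reduce the band lower bound to a many-bump indistinguishability argument (the paper factors this into Lemmas \ref{testing_ci_lemma_new} and \ref{general_lower_bound_lemma}, citing \cite{lepski_asymptotically_2000} for the multi-alternative step you handle by $\chi^2$ mixing); build a fixed base function $f_0$ that sits in a self-similarity class, add spatially disjoint bumps at a single fine resolution, and observe that the two constraints (H\"older budget of the target class, survival of the self-similarity \emph{lower} bound under the enlarged H\"older constant) jointly produce the $\min\{\varepsilon^{-1}B,\overline B\}$ cap; then balance bump height against $\log M_n$ to recover the stated constant. Your calibration of $j_1$ and $a_n$ reproduces the rate and the $\min\{\cdot\}^{1/(2\gamma+1)}$ dependence correctly.

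The one substantive place you diverge is the choice of $f_0$. You propose a full-support lacunary Weierstrass-type series (bumps at every location $k$ at each active scale, with a gap $\rho$), and you assert, without proof, that "the term at the active resolution just above $j$ dominates the tail," so that both one-sided bias bounds hold at every $j\ge\underline\ell$. That claim is not automatic for a full-support construction — contributions from coarser active scales, and from inactive $j$ between lacunary gaps, have to be controlled, which generally requires vanishing-moment or smoothness hypotheses on $\psi$ and $K$ and a careful choice of $\rho$, none of which you verify. The paper sidesteps this entirely by taking $f_0=\tilde g_{\underline\ell,\gamma,A}$ to be a \emph{single}-peak dyadic sequence $\sum_{\ell\ge\underline\ell}2^{-\ell(\gamma+1/2)}\psi_{\ell k^*}$ placed near $x=0$ (hence supported in $[0,1/2]$, disjoint from the bump region $[1/2,1]$): Lemma \ref{support_lemma} shows the $\psi_{\ell k^*}$ and their projections $K_j\psi_{\ell k^*}$ have pairwise disjoint supports, so at each $j$ exactly one term is active near $2^{-j}k^*$ and the lower bias bound drops out by a one-line change of variables (Lemma \ref{bias_lower_bound_lemma_new}), while the H\"older bound is a sum of nonoverlapping pieces (Lemmas \ref{wavelet_expansion_holder_lemma_new}, \ref{holder_sum_lemma_new}). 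I would replace your lacunary $f_0$ with this single-peak construction; once you do, your argument matches the paper's and the remaining steps are routine.
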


We now consider adaptation to $\gamma$ with $B$ known.  To avoid notational clutter, we normalize $B$ to one.

\begin{theorem}\label{exponent_adaptation_thm_general_kernel}
  Let $0<\underline\gamma<\gamma\le \overline\gamma$ and let
  $0<2\alpha<\beta<1$.  Let $K$ be a kernel that satisfies
  (\ref{kernel_lower_bound_assump}).  There exist $C_{K,\overline\gamma,*}$,
  $\underline j_{K,\overline\gamma}$ and $\eta_{K,\overline\gamma}$ depending
  only on $K$ and $\overline\gamma$ such that, for all $\underline\ell\ge
  \underline j_{K,\overline\gamma}$ and $0<\varepsilon\le\varepsilon'<\eta_{K,\overline\gamma}$,
  \begin{align*}
    &R^*_{n,\alpha,\beta}\left( \Fselfsim(\gamma,1,\varepsilon'; K,\underline\ell),
      \cup_{\gamma'\in[\underline\gamma,\overline\gamma]} \Fselfsim(\gamma',1,\varepsilon; K,\underline\ell)\right)  \\
    &\ge (1+o(1))C_{K,\overline\gamma,*} \varepsilon^{-1/(2\gamma+1)}\left(\sigma_n^2\log (1/\sigma_n)\right)^{\gamma/(2\gamma+1)}.
  \end{align*}
\end{theorem}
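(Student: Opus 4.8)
The plan is to reduce the lower bound to a two-point (or finitely-many-point) testing problem, in the same spirit as the classical Low--type argument but with functions carefully engineered to lie in the relevant self-similarity classes. Concretely, I would fix the target regularity $\gamma$ and construct a pair of functions $f_0$ and $f_1$ with the following properties: (i) $f_0\in\Fselfsim(\gamma,1,\varepsilon';K,\underline\ell)$, so it belongs to the ``small'' class over which we measure length; (ii) $f_1\in\Fselfsim(\underline\gamma,1,\varepsilon;K,\underline\ell)$ (or some $\gamma'\in[\underline\gamma,\overline\gamma]$), so it belongs to the ``large'' class over which coverage must hold; (iii) $f_0$ and $f_1$ differ at some point $x^*\in[0,1]$ by an amount $\delta_n$ of order $\varepsilon^{-1/(2\gamma+1)}(\sigma_n^2\log(1/\sigma_n))^{\gamma/(2\gamma+1)}$; and (iv) the two induced measures $P_{f_0}$ and $P_{f_1}$ are statistically close enough that no test can separate them with the required error probabilities. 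Given such a pair, a band with coverage $1-\alpha$ over the union class must cover $f_1(x^*)$ with probability at least $1-\alpha$ under $P_{f_1}$, hence (by closeness of the measures, using $2\alpha<\beta$) it must cover $f_1(x^*)$ with probability at least roughly $\beta$ under $P_{f_0}$ as well; since it also covers $f_0(x^*)$ with high probability under $P_{f_0}$, its length at $x^*$ must be at least $\delta_n$ on an event of probability at least $\beta$, giving $R_\beta \gtrsim \delta_n$.

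The construction of the perturbation is where the $\varepsilon^{-1/(2\gamma+1)}$ factor enters, and this is the crux. I would take $f_0$ to be a fixed ``self-similar'' function achieving equality (up to constants) in the lower bias bound at every resolution $j\ge\underline\ell$ with exponent $\gamma$ — e.g. a lacunary series $\sum_{j\ge\underline\ell} 2^{-j\gamma}\psi(2^j\cdot)$ built from a suitable bounded-support bump $\psi$, rescaled so that the Hölder constant is $1$ and the lower bias constant is $\varepsilon'$. Then I perturb at a single resolution level $j_n$ (chosen as the minimax-type balance $2^{-j_n\gamma}\asymp \varepsilon^{-1/(2\gamma+1)}(\sigma_n^2\log(1/\sigma_n))^{\gamma/(2\gamma+1)}$, i.e. $2^{j_n}\asymp (\varepsilon^{-1}\sigma_n^{-2}/\log(1/\sigma_n))^{1/(2\gamma+1)}$, up to the appropriate power) by adding a term of size $c\,\varepsilon\,2^{-j_n\gamma}\,\psi(2^{j_n}(\cdot - x^*))$. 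The size is dictated by feasibility: for $f_1$ to remain in the self-similarity class with the \emph{smaller} constant $\varepsilon$, the perturbation's contribution to the bias at level $j_n$ must not swamp the lower bound $\varepsilon\,2^{-j_n\gamma}$ required at exponent $\gamma'=\gamma$, yet it must be large enough to shift $f$ into a class with smaller exponent $\underline\gamma$ (or rather, large enough that $f_1$'s bias at lower levels no longer decays at rate $2^{-j\gamma}$ but only at the slower rate consistent with $\underline\gamma$). The localization and bounded support of $K$ from (\ref{kernel_lower_bound_assump}) is what lets me control the effect of the single-level bump on $K_j f$ at all other levels $j$, and $\underline j_{K,\gamma}$ absorbs the requirement that $\underline\ell$ be large enough for these estimates to bite.

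For the information-theoretic half, the single-resolution bump at level $j_n$ has $L^2$ norm of order $2^{-j_n/2}\cdot\varepsilon\,2^{-j_n\gamma}$, so $\|f_1-f_0\|_{L^2([-\eta,1+\eta])}^2/\sigma_n^2 \asymp \varepsilon^2 2^{-j_n(2\gamma+1)}/\sigma_n^2$; with the above choice of $j_n$ this is of order $\varepsilon^2 \cdot \varepsilon^{-1}\sigma_n^2/(\sigma_n^2\log(1/\sigma_n)) = \varepsilon/\log(1/\sigma_n)$ — wait, that vanishes, so I would instead tune $2^{j_n}$ with a logarithmic factor so the chi-square / KL divergence between $P_{f_0}$ and $P_{f_1}$ is of constant order (this is the standard $\log$ inflation responsible for the $\log(1/\sigma_n)$ in the rate), while the pointwise gap $\delta_n=|f_1(x^*)-f_0(x^*)|\asymp\varepsilon\,2^{-j_n\gamma}$ retains the claimed order $\varepsilon^{-1/(2\gamma+1)}(\sigma_n^2\log(1/\sigma_n))^{\gamma/(2\gamma+1)}$ after substituting. (To get the $\log$ factor sharply one typically uses many bumps at level $j_n$ translated across $[0,1]$ and a union bound / Bonferroni argument rather than a single bump; I would follow that route, as in Giné--Nickl and Cai--Low.) Combining the two halves via the two-point lemma gives the stated bound. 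The main obstacle I anticipate is (iii)+feasibility: verifying simultaneously that $f_1$ honestly satisfies the \emph{two-sided} bias bounds (\ref{GN_j_condition}) at \emph{every} level $j\ge\underline\ell$ with the smaller constant $\varepsilon$ and exponent in $[\underline\gamma,\overline\gamma]$, and that $f_0$ does so with $\varepsilon'$ and exponent $\gamma$ — this requires carefully choosing $\psi$ (oscillatory enough to have controlled interaction with $K_j$ across scales, localized enough for the bounded-support arguments) and is the delicate, kernel-dependent part of the proof.
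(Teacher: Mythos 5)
Your general direction is right --- reduce to testing, build anchor functions from a lacunary series $\sum_\ell \beta_\ell\psi(2^\ell\cdot -k^*)$, and use many translated bumps to get the $\log$ factor --- but the mechanism you propose for getting the $\varepsilon^{-1/(2\gamma+1)}$ penalty does not close, and you in fact notice this yourself. A single-scale perturbation of $f_0$ at level $j_n$ (or many bumps at level $j_n$) leaves the bias of $f_1$ at every scale $j\neq j_n$ essentially equal to that of $f_0$, which decays like $2^{-j\gamma}$. So either $f_1$ stays in $\Fselfsim(\gamma,1,\varepsilon)$ with the same exponent, in which case the $\FHol(\gamma,1)$ constraint caps the bump amplitude at $\lesssim 2^{-j_n\gamma}$ and you recover only the standard $(\sigma_n^2\log(1/\sigma_n))^{\gamma/(2\gamma+1)}$ rate with no $\varepsilon^{-1/(2\gamma+1)}$ gain; or you try to place $f_1$ in a class with $\gamma'<\gamma$, but then the lower bias bound $\varepsilon 2^{-j\gamma'}$ must hold for \emph{every} $j\ge\underline\ell$, and it fails for $j\gg j_n$ because $2^{-j\gamma}\ll \varepsilon 2^{-j\gamma'}$. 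Your ``wait, that vanishes'' and your closing paragraph flag exactly this feasibility obstruction, but the proposal does not resolve it.

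The paper's resolution is a genuinely different construction at the anchor level. It builds \emph{two} lacunary anchors, not one plus a local perturbation: $\tilde g_{\underline\ell,\gamma,A}$ with coefficients $A2^{-\ell(\gamma+1/2)}$ (giving a function in $\Fselfsim(\gamma,1,\varepsilon')$), and $\tilde f_{\underline\ell,\gamma,\delta_n,\tilde\varepsilon,A}$ with coefficients $A\max\{2^{-\ell(\gamma+1/2)},\tilde\varepsilon 2^{-\ell(\gamma-\delta_n+1/2)}\}$, which agree with $\tilde g$'s at small $\ell$ but switch to the slower rate $\gamma-\delta_n$ at large $\ell$. Because the switch happens \emph{at every sufficiently large level}, the lower bias bound with exponent $\gamma-\delta_n$ and constant $\varepsilon$ is satisfied for all $j\ge\underline\ell$ (Lemma \ref{tilde_g_tilde_f_lemma_new}); adding $\widetilde{\mathcal{F}}(\gamma-\delta_n,1/2,1/2,1)$ bumps then lands in $\Fselfsim(\gamma-\delta_n,1,\varepsilon)$. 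The $\varepsilon^{-1/(2\gamma+1)}$ factor does \emph{not} come from tuning a bump amplitude; it comes from the exponent shift itself. Choosing $\delta_n=(1-b_n)(2\gamma+1)\log\tilde\varepsilon^{-1}/\log n$ makes $\|\tilde f_n-\tilde g_n\|/\sigma_n\to 0$ (indistinguishability) while the ratio $(\sigma_n^2\log(1/\sigma_n))^{(\gamma-\delta_n)/(2(\gamma-\delta_n)+1)}/(\sigma_n^2\log(1/\sigma_n))^{\gamma/(2\gamma+1)}$ converges to $\tilde\varepsilon^{-1/(2\gamma+1)}$. So the missing idea in your proposal is precisely this sliding-exponent anchor $\tilde f_n$ with $\delta_n\to 0$ at a tuned rate; without it, the feasibility constraint you correctly identified cannot be satisfied, and the claimed $\varepsilon^{-1/(2\gamma+1)}$ penalty cannot be produced.
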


It follows from Theorems \ref{constant_adaptation_thm_general_kernel} and
\ref{exponent_adaptation_thm_general_kernel} that adaptive confidence bands must pay an
adaptation penalty proportional to $\varepsilon^{-1/(2\gamma+1)}$.  Furthermore,
these results show that one cannot ``adapt to the self-similarity constant:''
if we require coverage for $\varepsilon$-self-similarity, then the adaptation
penalty is proportional to $\varepsilon^{-1/(2\gamma+1)}$, even for functions
that are $\varepsilon'$-self-similar with $\varepsilon'>\varepsilon$.

\subsection{Achieving the Bound}\label{achieving_bounds_sec_main}

We now turn to upper bounds.  Both of these bounds can be achieved simultaneously for all
$\gamma\in[\underline\gamma,\overline\gamma]$ and $B\in[\underline B,\overline
B]$ by a single confidence band, up to an additional term that depends only on
$K$ and the range $[\underline\gamma,\overline\gamma]$.
We first state the upper bound, and then describe the confidence band that
achieves it.

We make some additional assumptions on the kernel:
\begin{align}\label{kernel_upper_bound_assump}
  \parbox{3.7in}{$\sup_{t\in[0,1]}\int K(t,x)^2\, dx<\infty$ and there exists $\tau_K>0$ such that $\sup_{s,t\in[0,1]}\frac{\int \left[ K(s,x)-K(t,x) \right]^2\, dx}{|s-t|^{\tau_K}}<\infty$.}
\end{align}
Condition (\ref{kernel_upper_bound_assump}) is a mild continuity condition.  For
convolution kernels $K(y,x)=\tilde K(y-x)$ or wavelet projection kernels
$K(y,x)=\sum_k\phi(y-k)\phi(x-k)$, it is sufficient for the kernel $\tilde K$ or
father wavelet $\phi$ to be bounded with finite support and bounded first
derivative (see \cite{gine_confidence_2010}, p. 1146 for the latter case).

\begin{theorem}\label{upper_bound_thm_general_kernel}
Let $0<\underline B<\overline B$ and $0<\underline\gamma<\overline\gamma$ be
given, and let $K$ be a kernel that satisfies 
(\ref{kernel_lower_bound_assump}) and (\ref{kernel_upper_bound_assump}), such
that, for some $\tilde C$, (\ref{tildeCgamma_bound_eq}) holds for
all $B\in[\underline B,\overline B]$ and all $\gamma\in [\underline\gamma,\overline\gamma]$.  There exists a confidence band
$\mathcal{C}_n(\cdot)$ and a constant
$C_{K,\overline\gamma,\tilde C}^{*}$ depending only on $K$, $\overline\gamma$ and $\tilde
C$ such that, with probability approaching one uniformly
over $\cup_{\gamma\in[\underline\gamma,\overline\gamma]}\cup_{B\in [\underline
  B,\overline B]}\Fselfsim(\gamma,B,\varepsilon)$, 
\begin{align*}
  \sup_{x\in[0,1]}\length\left( \mathcal{C}_n(x) \right)
  \le C_{K,\overline\gamma,\tilde C}^{*}\left( B\varepsilon^{-1} \right)^{1/(2\gamma+1)}(\sigma^2_n\log (1/\sigma^2_n))^{\gamma/(2\gamma+1)}
\end{align*}
and $f(x)\in \mathcal{C}_n(x)$ all $x\in[0,1]$.
\end{theorem}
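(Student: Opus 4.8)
The plan is to construct the confidence band explicitly, following the heuristic given in the paper: use a single high-resolution level $j_1$ for a pilot estimator, estimate the bias at a slightly lower level $j_2$, inflate that estimate by the self-similarity factor $\varepsilon^{-1}$ times a power of $2$ to get an \emph{upper bound} on the bias at level $j_1$, and then add the supremum estimation error. Concretely, I would take $2^{-j_1} \asymp (\sigma_n^2 \log(1/\sigma_n^2))^{1/(2\gamma+1)}(B\varepsilon^{-1})^{-2/(2\gamma+1)}$ — but of course $\gamma$ and $B$ are unknown, so I would instead do a Lepski-type search over a dyadic grid of candidate levels $j$, using the self-similarity lower bound in (\ref{GN_j_condition}) to calibrate. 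For each candidate level $j$, form $\hat f(t,j)$, and use the plug-in bias bound $\widehat{\mathrm{bias}}(j) = \varepsilon^{-1}\tilde C 2^{-\gamma_j(j-j')}\sup_t|\hat f(t,j') - \hat f(t,j)|$ for a reference level $j' = j'(j)$ chosen to grow slightly slower than $j$ (say $j' = \lceil \kappa j \rceil$ for $\kappa$ slightly below $1$, or $j' = j - c\log j$), so that the estimation error at level $j'$ is of smaller order than at level $j$. The confidence band is then $\mathcal{C}_n(x) = [\hat f(x,\hat j) - w_n, \hat f(x,\hat j) + w_n]$ where $w_n$ combines $\widehat{\mathrm{bias}}(\hat j)$ with a multiplier-bootstrap or Gaussian-maximum quantile for the estimation error $\sup_t |\hat f(t,\hat j) - K_{\hat j}f(t)|$, and $\hat j$ is the largest level in the grid at which the estimated bias is still dominated by the estimation-error term.

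The key steps, in order, would be: (1) Establish the estimation-error bound: under (\ref{kernel_upper_bound_assump}), $\sup_{t\in[0,1]}|\hat f(t,j) - K_jf(t)|$ is, with probability approaching one, bounded by $\Ksupp' \sigma_n 2^{j/2}\sqrt{j\log 2}$ for a kernel-dependent constant, uniformly over $j$ in the grid — this is the standard Gaussian-process / Borell–TIS maximal inequality argument, using the modulus of continuity in (\ref{kernel_upper_bound_assump}) and the compact support from (\ref{kernel_lower_bound_assump}). (2) Show coverage: on the event that the estimation-error bound holds at all grid levels \emph{and} the plug-in bias bound $\widehat{\mathrm{bias}}(j)$ genuinely dominates the true bias $\sup_t|K_jf(t) - f(t)|$, the band covers; the latter requires combining the self-similarity upper bound $\sup_t|K_{j'}f(t)-f(t)| \le \tilde C B 2^{-j'\gamma}$, the self-similarity \emph{lower} bound to convert it into a bound usable with the estimated quantity, and the triangle inequality $\sup_t|\hat f(t,j')-\hat f(t,j)| \ge \sup_t|K_{j'}f(t)-K_jf(t)| - (\text{estim. errors})$ together with $\sup_t|K_{j'}f(t) - K_jf(t)| \ge \sup_t|K_{j'}f(t) - f(t)| - \sup_t|K_jf(t)-f(t)|$ and the fact that the first of these dominates for $j' \ll j$. (3) Show the width bound: on the coverage event, at the selected $\hat j$ both $\widehat{\mathrm{bias}}(\hat j)$ and the estimation error are $O((B\varepsilon^{-1})^{1/(2\gamma+1)}(\sigma_n^2\log(1/\sigma_n^2))^{\gamma/(2\gamma+1)})$; this follows because the oracle level $j_1^*$ that balances the inflated bias $\varepsilon^{-1}\tilde C^2 B 2^{-\gamma j_1}$ against $\sigma_n 2^{j_1/2}\sqrt{j_1}$ lies in the grid (after the Lepski search overshoots it by at most one grid step), and the inflation by $2^{-\gamma(j-j')}$ with $j - j' = o(j)$ contributes only a factor tending to a constant.

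The main obstacle I anticipate is Step (2), specifically making the plug-in bias bound simultaneously (a) a valid upper bound on the true bias with high probability and (b) not too conservative, while $\gamma$ is unknown. The difficulty is that $\widehat{\mathrm{bias}}(j)$ involves $2^{-\gamma_j(j-j')}$ with an estimated or conservatively-chosen exponent: if I plug in $\underline\gamma$ everywhere I get validity but the width bound degrades to the $\underline\gamma$ rate; if I try to estimate $\gamma$ I reintroduce a Lepski-type problem. The resolution — following Bull and Chernozhukov–Chetverikov–Kato as the paper hints — is to note that $j - j'$ can be taken to be a slowly growing function like $c\log j$ rather than proportional to $j$, so that $2^{-\overline\gamma(j-j')} = j^{-c\overline\gamma\log 2}$ decays only polylogarithmically; then using the \emph{largest} exponent $\overline\gamma$ in the inflation factor is still asymptotically harmless (it only costs a polylog factor, absorbed into the $o(1)$/constant), and one never needs to know the true $\gamma$ for the bias bound — the self-similarity \emph{lower} bound at level $j'$ is what does the work, certifying that the \emph{observed} difference $\sup_t|\hat f(t,j')-\hat f(t,j)|$ is not spuriously small. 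Getting the bookkeeping right so that the final constant $C^*_{K,\overline\gamma,\tilde C}$ genuinely does not depend on $\varepsilon$, and the only $\varepsilon$-dependence is the explicit $\varepsilon^{-1/(2\gamma+1)}$ factor, is the delicate part; once the $\log j$ trick is in place the rest is routine maximal-inequality and balancing calculations.
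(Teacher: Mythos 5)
Your overall scheme matches the paper's: estimate the bias at a lower resolution from the observed difference $\hat\Delta(j,j')$, inflate by $\varepsilon^{-1}\tilde C\,2^{-\gamma(j-j')}$ using the self-similarity lower bound, and add a Gaussian-maximum critical value $c(j)\asymp\sigma_n 2^{j/2}\sqrt{j}$ for the estimation error (the paper uses exactly this deterministic critical value, valid simultaneously over $j$, via Piterbarg's bound in Lemma \ref{cval_lemma}). Your Lepski-type selection of $\hat\jmath$ is a minor variant of the paper's, which simply minimizes the band half-width over $(j,j_1,j_2)$ in a finite grid $\mathcal J_n$. But the resolution you propose for the difficulty you correctly flag --- the unknown $\gamma$ in the factor $2^{-\gamma(j-j')}$ --- does not work, in two respects. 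First, the direction is reversed: with $j>j'$ the map $\gamma\mapsto 2^{-\gamma(j-j')}$ is \emph{decreasing}, so plugging in $\overline\gamma$ gives the \emph{smallest} possible inflation factor over $[\underline\gamma,\overline\gamma]$ and therefore underestimates the bias whenever the true $\gamma<\overline\gamma$; this destroys coverage, not just the width bound. Second, even the conservative choice $\underline\gamma$ does not rescue the rate under your ``$\log j$ trick'': the cost of using $\gamma^*<\gamma$ is the multiplicative factor $2^{(\gamma-\gamma^*)(j-j')}$, and with $\gamma-\gamma^*$ bounded below by a constant and $j-j'\asymp\log j\asymp\log\log n$ this is a genuinely diverging (polylog-in-$n$) factor, not something ``absorbed into the constant'' $C^*_{K,\overline\gamma,\tilde C}$, which the theorem requires to be independent of $n$.

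The paper's actual ingredient that closes this gap, and which your proposal is missing, is a preliminary confidence interval $[\hat\gamma_\ell,\hat\gamma_u]$ for $\gamma$ built from the same $\hat\Delta$ statistics (Appendix \ref{gamma_ci_sec_append}). Because $\Delta(j_1,j_2;f)\asymp B\,2^{-j_1\gamma}$ up to self-similarity constants, solving the inequality (\ref{gamma_est_lower_bound_eq}) for $\gamma$ yields endpoints with $\hat\gamma_u-\hat\gamma_\ell=O(1/j_1)$ with probability tending to one. The band then uses the interval endpoints inside the carefully monotone function $a(\varepsilon,j_1,j_2,j,\hat\gamma_\ell,\hat\gamma_u)$ so that coverage holds for every $\gamma$ in the interval, and the width calculation (Appendix \ref{confidence_band_length_sec_append}) chooses $j_1=j-m_{1,n}$ with $m_{1,n}\to\infty$ but $r_n m_{1,n}\to 0$, where $r_n\gtrsim\hat\gamma_u-\hat\gamma_\ell$. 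That two-sided tuning of $m_{1,n}$ --- large enough that the $2^{-(j-j_2)\gamma}$ correction is negligible, small enough that $2^{r_n m_{1,n}}\to 1$ --- is what makes the loss from not knowing $\gamma$ vanish asymptotically rather than contribute a polylog penalty. Without an $o(1/(j-j'))$ estimate of $\gamma$, no choice of fixed exponent and no choice of $j-j'$ can simultaneously give coverage and the stated $\varepsilon^{-1/(2\gamma+1)}$ width.
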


\begin{sloppypar}
To prove this theorem, we construct a confidence band that has
coverage for the class
$\cup_{B\in[\underline B,\overline B]} \cup_{\gamma\in[\underline \gamma,\overline \gamma]}
  \FGN(\gamma,\varepsilon B, B)$,
such that the width is bounded by a constant times
$(\varepsilon^{-1}B)^{1/(2\gamma+1)}(\sigma_n\log
(1/\sigma_n))^{\gamma/(2\gamma+1)}$ with probability approaching one uniformly
over the class $\FGN(\gamma,\varepsilon B, B)$.
Letting $\tilde\varepsilon=\varepsilon/\tilde C$ and $\tilde B=\tilde C B$, we have
$\Fselfsim(\varepsilon,\gamma,B)\subseteq\FGN(\gamma,\tilde\varepsilon \tilde B,
 \tilde B)$
under (\ref{tildeCgamma_bound_eq}),
so that the conclusion of Theorem
\ref{upper_bound_thm_general_kernel} holds for this confidence band, constructed
with $\tilde\varepsilon=\varepsilon/\tilde C$ in place of $\varepsilon$.
We describe the confidence band here, with additional details in Appendix
\ref{achieving_bounds_sec_append}.
\end{sloppypar}

Let $\Delta(j,j';f)=\sup_{x\in[0,1]}|K_jf(x)-K_{j'}f(x)|$
and
$\hat \Delta(j,j')=\sup_{x\in[0,1]}|\hat f(x,j)-\hat f(x,j')|$.
Let $c(j)$ and $\tilde c(j,j')$ be critical values satisfying
\begin{align}\label{fxh_coverage_eq}
|\hat f(x,j)-K_jf(x)|\le c(j)
\text{ all } x\in [0,1],\, j\in \mathcal{J}_n
\end{align}
and
\begin{align}\label{delta_coverage_eq}
  |\hat \Delta(j,j')-\Delta(j,j';f)|\le \tilde c(j,j')
  \text{ all } j,j'\in \mathcal{J}_n
\end{align}
with some prespecified probability
for all $f\in
\cup_{\gamma\in[\underline\gamma,\overline\gamma]}\cup_{B\in[\underline
B,\overline B]}\FGN(\gamma,\varepsilon B,B)$, where
$\mathcal{J}_n=\{\underline\ell_n,\underline\ell_n+1,\ldots,\overline \ell_n\}$ for
some $\underline\ell_n$, $\overline \ell_n$
(it suffices to set $c(j)=\cvalconst \sigma_n
2^{j/2}\sqrt{j}$ and $\tilde c(j,j')=c(j)+c(j')$ for a large enough constant
$\cvalconst$ and to take $\underline \ell_n\to\infty$ with $\underline
\ell_n/\log n\to 0$ and $\overline \ell_n/\log n\to\infty$; see Appendix
\ref{achieving_bounds_sec_append}).
We construct a confidence band that covers $f$ for all $f\in
\cup_{\gamma\in[\underline\gamma,\overline\gamma]}\cup_{B\in[\underline
B,\overline B]}\FGN(\gamma,\varepsilon B,B;K,\underline \ell_n)$ on the event
that (\ref{fxh_coverage_eq}) and (\ref{delta_coverage_eq}) both hold.
To this end, we use $\Delta(j,j';f)$ along with the self-similarity condition to
bound the bias $|K_jf(x)-f(x)|$.  This, along with the confidence bands $\hat
f(x,j)\pm c(j)$ and $\hat \Delta(j,j')\pm \tilde c(j,j')$ for $K_jf(x)$ and
$\Delta(j,j';f)$ leads to a confidence
band for $f$.
First, note that, for $f\in\FGN(\gamma,\varepsilon B,B;K,\underline \ell)$ and $j_1,j_2\ge \underline\ell$,
\begin{align}\label{delta_bound_eq}
  &B(\varepsilon 2^{-j_1\gamma}-2^{-j_2\gamma})
    \le \sup_{x\in[0,1]}|K_{j_1}f(x)-f(x)| - \sup_{x\in[0,1]}|K_{j_2}f(x)-f(x)|  \nonumber  \\
    &\le \Delta(j_1,j_2;f)  %
    \le \sup_{x\in[0,1]}|K_{j_1}f(x)-f(x)| + \sup_{x\in[0,1]}|K_{j_2}f(x)-f(x)|  \\
    &\le B(2^{-j_1\gamma}+2^{-j_2\gamma})  \nonumber  
\end{align}
where the second and third inequalities are applications of the triangle inequality.
For $0<\gamma_\ell<\gamma_u$, define
\begin{align*}
  &a(\varepsilon,j_1,j_2,j,\gamma_\ell,\gamma_u)
  = \max\left\{ \varepsilon 2^{-\max\left\{ (j_1-j)\gamma_u, (j_1-j)\gamma_\ell \right\}} -  2^{-\min\left\{ (j_2-j)\gamma_u, (j_2-j)\gamma_\ell \right\}}, 0 \right\}.
\end{align*}
If $\gamma_\ell\le\gamma\le\gamma_u$ and
$a(\varepsilon,j_1,j_2,j,\gamma_\ell,\gamma_u)>0$, then
$a(\varepsilon,j_1,j_2,j,\gamma_\ell,\gamma_u)
\le \frac{\varepsilon 2^{-j_1\gamma}-2^{-j_2\gamma}}{2^{-j\gamma}}$
so that, for any $f\in\FGN(\gamma,\varepsilon B,B)$,
\begin{equation}\label{a_delta_bias_bound_eq}
  \begin{aligned}
  &\sup_{x\in[0,1]}|K_jf(x)-f(x)|\le B 2^{-j\gamma}
    \le B\frac{\varepsilon 2^{-j_1\gamma}-2^{-j_2\gamma}}{a(\varepsilon,j_1,j_2,j,\gamma_\ell,\gamma_u)}
    \le \frac{\Delta(j_1,j_2;f)}{a(\varepsilon,j_1,j_2,j,\gamma_\ell,\gamma_u)}   
  \end{aligned}
\end{equation}
where the last inequality uses (\ref{delta_bound_eq}).

In Appendix \ref{gamma_ci_sec_append}, we provide an interval
$[\hat\gamma_\ell,\hat\gamma_u]$ that contains $\gamma$ on the event in
(\ref{delta_coverage_eq}).
Letting $\hat\jmath$, $\hat\jmath_1$ and $\hat\jmath_2$ be data dependent
values that are contained in $\mathcal{J}_n$ with probability one,
it follows from (\ref{a_delta_bias_bound_eq}) that, on the event that
(\ref{fxh_coverage_eq}) and (\ref{delta_coverage_eq}) both hold, the band
\begin{align*}%
\hat f(x,\hat\jmath)\pm \left[c(\hat\jmath)+\frac{\hat \Delta(\hat\jmath_1,\hat\jmath_2)+\tilde c(\hat\jmath_1,\hat\jmath_2)}{a(\varepsilon,\hat\jmath_1,\hat\jmath_2,\hat\jmath,\hat \gamma_\ell,\hat\gamma_u)}\right]
\end{align*}
contains $f(x)$ for all $x\in [0,1]$.
Since $\hat\jmath_1$, $\hat\jmath_2$ and $\hat\jmath$ can be data
dependent, we can simply choose them to
minimize
the length of this band.
For concreteness, we will assume that $\mathcal{J}_n$ is finite for each $n$, so
that a minimum is taken:
\begin{align*}%
c(\hat\jmath)+\frac{\hat \Delta(\hat\jmath_1,\hat\jmath_2)+\tilde c(\hat\jmath_1,\hat\jmath_2)}{a(\varepsilon,\hat\jmath_1,\hat\jmath_2,\hat\jmath,\hat \gamma_\ell,\hat\gamma_u)}
= \min_{j,j_1,j_2\in\mathcal{J}_n}\left[c(j)+\frac{\hat \Delta( j_1, j_2)+\tilde c(j_1, j_2)}{a(\varepsilon, j_1, j_2, j,\hat \gamma_\ell,\hat\gamma_u)}\right],
\end{align*}
where we use the convention that
$\frac{\hat \Delta( j_1, j_2)+\tilde c(j_1,j_2)}{a(\varepsilon, j_1, j_2, j,\hat \gamma_\ell,\hat\gamma_u)}$ is
equal to $+\infty$ if
$a(\varepsilon, j_1, j_2, j,\hat\gamma_\ell,\hat\gamma_u)=0$, so that the
minimum is only over $j,j_1,j_2$ such
that $a(\varepsilon, j_1, j_2, j,\hat \gamma_\ell,\hat\gamma_u)>0$.
The half-length of this band is then bounded by
\begin{align}\label{confidence_band_width_eq}
\min_{j,j_1,j_2\in\mathcal{J}_n}\left[
c(j)
+\frac{B(2^{-j_1\gamma}+2^{-j_2\gamma})+2\tilde c(j_1,j_2)}{a(\varepsilon,j_1,j_2,j,\hat \gamma_\ell,\hat \gamma_u)}
\right]
\end{align}
on the event that
(\ref{fxh_coverage_eq}) and (\ref{delta_coverage_eq}) both hold (here we use the
upper bound in (\ref{delta_bound_eq})).
In Appendix \ref{confidence_band_length_sec_append}, we use this bound to show that
this confidence band, constructed
with $\tilde\varepsilon=\varepsilon/\tilde C$ in place of $\varepsilon$,
satisfies the requirements of Theorem \ref{upper_bound_thm_general_kernel}.

\subsection{Alternative Definition of Self-Similarity Constant}\label{alternative_self_sim_sec}

We have defined $\Fselfsim(\gamma,B,\varepsilon)$ to be the class
of functions in $\FHol(\gamma,B)$ such that the lower bound in
(\ref{GN_j_condition}) holds with $b_1=\varepsilon B$.
Under (\ref{tildeCgamma_bound_eq}), this means that the self-similarity constant $\varepsilon$ gives
the ratio between the upper and lower bound on bias, up to the constant $\tilde C$.
The coverage condition takes the union of these classes with $\varepsilon$ fixed, so that large values of the H\"{o}lder constant require proportionally large values of the lower bound.

Alternatively, one could fix the lower bound $b_1=\varepsilon B$ when taking the
union of these classes.
This leads to the class
$\Fselfsimalt(\gamma,B,b_1)=\Fselfsim(\gamma,B,b_1/B)$.
Of course, this does not change the conclusion of Theorem
\ref{exponent_adaptation_thm_general_kernel} (adaptation to $\gamma$ with $B$ fixed) since
the formulation of this problem remains the same.
For adaptation to $B$, however, we obtain a different formulation,
with coverage required over the class
$\cup_{B\in [\underline B,\overline B]}\Fselfsimalt(\gamma,B,b_1)
=\Fselfsimalt(\gamma,\overline B,b_1)
=\Fselfsim(\gamma,\overline B,b_1/\overline B)$.
As the next theorem shows, this leads to a much more negative result: adaptation
to the H\"{o}lder constant is completely impossible.

\begin{theorem}\label{constant_adaptation_thm_alt_general_kernel}
  Let $\gamma>0$ and let
  $0<2\alpha<\beta<1$.  Let $K$ be a kernel satisfying
  (\ref{kernel_lower_bound_assump}).  There exists $\underline j_{K,\gamma}$,
  $C_{K,\gamma,*}>0$ and $\eta_{K,\gamma}>0$ such that, for any
  $0< B\le \overline B$,
  $b_1\le \eta_{K,\gamma}B$ and $\underline\ell\ge \underline j_{K,\gamma}$,
  \begin{align*}
    &R^*_{n,\alpha,\beta}(\Fselfsimalt(\gamma,B,b_1;K,\underline\ell), 
\Fselfsimalt(\gamma,\overline B,b_1;K,\underline \ell))  \\
    &\ge (1+o(1))C_{K,\gamma,*}\overline B^{1/(2\gamma+1)}\left(\sigma_n^2\log (1/\sigma_n)\right)^{\gamma/(2\gamma+1)}.
  \end{align*}
\end{theorem}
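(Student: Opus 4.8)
The plan is to obtain the bound from the standard reduction of confidence-band lower bounds to a nonparametric testing problem in the spirit of \cite{low_nonparametric_1997}, using many candidate perturbations to recover the $\log(1/\sigma_n)$ factor characteristic of sup-norm problems. In fact the construction is the one that proves the ``$\overline B$ branch'' of Theorem~\ref{constant_adaptation_thm_general_kernel} (the regime $\min\{\varepsilon^{-1}B,\overline B\}=\overline B$), and the only genuinely new point is to check that \emph{every} alternative in that construction lies in the single class $\Fselfsimalt(\gamma,\overline B,b_1)=\Fselfsim(\gamma,\overline B,b_1/\overline B)$ rather than only in a union $\cup_{B'}\Fselfsim(\gamma,B',\varepsilon)$; with this in hand, coverage over $\Fselfsimalt(\gamma,\overline B,b_1)$ is exactly what the argument requires.

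First I would fix a ``least favorable'' self-similar baseline: a single function $f_0$ with $\|f_0\|_{C^\gamma}$ of order $b_1$ and $\sup_{t\in[3/4,1]}|K_jf_0(t)-f_0(t)|\ge b_1 2^{-j\gamma}$ for every integer $j\ge\underline j_{K,\gamma}$. Since $b_1\le\eta_{K,\gamma}B$ with $\eta_{K,\gamma}$ small, $\|f_0\|_{C^\gamma}\le B$, so $f_0\in\FHol(\gamma,B)$, and the displayed bias bound (which holds a fortiori with the supremum over all of $[0,1]$) shows the lower bound in (\ref{GN_j_condition}) holds with constant $b_1$; hence $f_0\in\Fselfsimalt(\gamma,B,b_1)$. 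Next, set $h_n=\big(\kappa\,\sigma_n^2\log(1/\sigma_n)/\overline B^2\big)^{1/(2\gamma+1)}$ for a small constant $\kappa=\kappa(K,\gamma)$, fix a $C^\gamma$ function $\psi$ supported in $(0,1)$ with $\sup\psi=1$, and for $k=1,\dots,M_n$ with $M_n\asymp 1/(2h_n)$ and $x_k=kh_n$ put $\phi_k(x)=D_n\,\psi((x-x_k)/h_n)$ with $D_n:=c_1\overline B h_n^\gamma$ for a small constant $c_1=c_1(K,\gamma)$, and $f_k=f_0+\phi_k$. Then the $\phi_k$ have disjoint supports contained in $[0,1/2]$; with $c_1$ small, $\|f_k\|_{C^\gamma}\le\|f_0\|_{C^\gamma}+c_1\overline B\|\psi\|_{C^\gamma}\le\overline B$; and since $2^{-j}\Ksupp<1/4$ for $j\ge\underline j_{K,\gamma}$ and $\phi_k$ is supported in $[0,1/2]$, we have $K_jf_k\equiv K_jf_0$ and $f_k\equiv f_0$ on $[3/4,1]$, so $\sup_{t\in[0,1]}|K_jf_k(t)-f_k(t)|\ge\sup_{t\in[3/4,1]}|K_jf_0(t)-f_0(t)|\ge b_1 2^{-j\gamma}$. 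Combined with $\|f_k\|_{C^\gamma}\le\overline B$, this gives $f_k\in\Fselfsimalt(\gamma,\overline B,b_1)$ for every $k$.

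For the testing step, take any $\mathcal{C}_n(\cdot)\in\mathcal{I}_{n,\alpha,\Fselfsimalt(\gamma,\overline B,b_1)}$ and let $z_k$ be a point with $\phi_k(z_k)=D_n$. Suppose, toward a contradiction, that $q_{\beta,f_0}\big(\sup_x\length(\mathcal{C}_n(x))\big)<D_n$. On an event of $P_{f_0}$-probability at least $\beta-\alpha$ the band then covers $f_0$ and has $\sup_x\length(\mathcal{C}_n(x))<D_n$, so it cannot contain $f_k(z_k)=f_0(z_k)+D_n$ for any $k$; hence $P_{f_0}\big(\bigcup_k\{f_k(z_k)\in\mathcal{C}_n(z_k)\}\big)\le 1-\beta+\alpha$. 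Coverage over $\Fselfsimalt(\gamma,\overline B,b_1)\ni f_k$ gives $M_n^{-1}\sum_k P_{f_k}\big(\bigcup_{k'}\{f_{k'}(z_{k'})\in\mathcal{C}_n(z_{k'})\}\big)\ge M_n^{-1}\sum_k P_{f_k}(f_k(z_k)\in\mathcal{C}_n(z_k))\ge 1-\alpha$, so the mixture $\bar P_n=M_n^{-1}\sum_k P_{f_k}$ over-weights that event relative to $P_{f_0}$ by at least $\beta-2\alpha>0$. But in the Gaussian white-noise model the orthogonality of the $\phi_k$ gives $\chi^2(\bar P_n\,\|\,P_{f_0})=M_n^{-1}\big(e^{\|\phi_k\|_2^2/\sigma_n^2}-1\big)$, with $\|\phi_k\|_2^2/\sigma_n^2=c_1^2\kappa\|\psi\|_2^2\log(1/\sigma_n)(1+o(1))$ and $\log M_n=\tfrac{2}{2\gamma+1}\log(1/\sigma_n)(1+o(1))$, so choosing $\kappa,c_1$ with $c_1^2\kappa\|\psi\|_2^2<\tfrac{2}{2\gamma+1}$ forces $\chi^2(\bar P_n\|P_{f_0})\to0$, hence $\|\bar P_n-P_{f_0}\|_{TV}\to0$ --- a contradiction for $n$ large. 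Therefore $q_{\beta,f_0}\big(\sup_x\length(\mathcal{C}_n(x))\big)\ge D_n$ eventually; since $f_0\in\Fselfsimalt(\gamma,B,b_1)$, taking the infimum over $\mathcal{C}_n$ yields $R^*_{n,\alpha,\beta}\big(\Fselfsimalt(\gamma,B,b_1),\Fselfsimalt(\gamma,\overline B,b_1)\big)\ge D_n=(1+o(1))\,C_{K,\gamma,*}\,\overline B^{1/(2\gamma+1)}\big(\sigma_n^2\log(1/\sigma_n)\big)^{\gamma/(2\gamma+1)}$ with $C_{K,\gamma,*}=c_1\kappa^{\gamma/(2\gamma+1)}$.

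The main obstacle is the existence of the baseline $f_0$: a single fixed function of small H\"older norm whose kernel bias $\sup_t|K_jf(t)-f(t)|$ is bounded below by a fixed multiple of $2^{-j\gamma}$ at all scales $j\ge\underline j_{K,\gamma}$, and --- because the bumps are confined to $[0,1/2]$ --- in particular on the fixed subinterval $[3/4,1]$. This is where the structural hypotheses on $K$ in (\ref{kernel_lower_bound_assump}) are used, via a Weierstrass-type series whose scale-$j$ content survives convolution with $K_j$ and reappears throughout $[0,1]$; it is the same technical lemma that underlies Theorems~\ref{constant_adaptation_thm_general_kernel} and \ref{exponent_adaptation_thm_general_kernel}, and the detailed verification belongs to Section~\ref{lower_bounds_proofs_sec}. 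Everything else --- converting coverage plus a width bound into exclusion of the points $f_k(z_k)$, and the Gaussian $\chi^2$ computation for the Bayesian mixture --- is routine.
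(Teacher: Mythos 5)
Your proposal is structurally the same as the paper's proof, and, apart from the baseline construction which you explicitly defer, it is correct. The paper proves Theorem~\ref{constant_adaptation_thm_alt_general_kernel} in Section~\ref{constant_adaptation_proof_sec} by taking a fixed wavelet-type series $\tilde g_{\underline\ell,\gamma,A}$ with $A$ of order $B$ as the self-similar baseline (Lemma~\ref{tilde_g_tilde_f_lemma_new}, via Lemmas~\ref{support_lemma}--\ref{holder_sum_lemma_new}), verifying that $\widetilde{\mathcal F}(\gamma,\tilde B,1/2,1)+\{\tilde g_{\underline\ell,\gamma,A}\}\subseteq\Fselfsimalt(\gamma,\overline B,b_1)$ with $\tilde B\ge\overline B/2$, and then invoking the general lower bound of Lemma~\ref{general_lower_bound_lemma}, which internally runs the bump/testing argument of Lepski--Tsybakov and the confidence-band-to-test reduction of Lemma~\ref{testing_ci_lemma_new}. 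Your $f_0$ is this $\tilde g$ (you just normalize its H\"older norm to be $\asymp b_1$ rather than $\asymp B$, which is immaterial), your $\phi_k$'s are the same disjoint-support bumps filling up the remaining H\"older budget $\overline B - \|f_0\|_{C^\gamma}$, and you merely swap which half of $[0,1]$ carries the baseline versus the bumps. The only genuine difference is in the testing step: you use an explicit $\chi^2$-distance computation for the Bayes mixture $\bar P_n = M_n^{-1}\sum_k P_{f_k}$, whereas the paper routes through $\dtest$ and a reduction lemma. These are interchangeable; both exploit orthogonality of the bumps and the same entropy-versus-signal-strength comparison $\|\phi_k\|_2^2/\sigma_n^2 < (1-\delta)\cdot 2\log M_n$, and both give a positive constant $C_{K,\gamma,*}$.

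The one thing you outsource --- existence of a single function $f_0$ of small H\"older norm with the uniform bias lower bound $\sup_{t}|K_j f_0(t)-f_0(t)|\ge b_1 2^{-j\gamma}$ at all $j\ge\underline j_{K,\gamma}$, localized away from the bump region --- is indeed the nontrivial technical content, and you correctly identify it as such. The paper's construction (a lacunary sum $\sum_{\ell\ge\underline\ell}\tilde\beta_\ell\,2^{\ell/2}\psi(2^\ell x - k^*)$ with $\tilde\beta_\ell=A2^{-\ell(\gamma+1/2)}$) is precisely the ``Weierstrass-type series'' you gesture at; the key points, handled in Lemmas~\ref{support_lemma}, \ref{bias_lower_bound_lemma_new}, \ref{wavelet_expansion_holder_lemma_new}, and \ref{holder_sum_lemma_new}, are that (i) the shifts $k^*$ can be chosen so that the scale-$j$ bump and its image under $K_j$ do not collide with the other scales, giving the clean lower bound $|\tilde\beta_j| 2^{j/2}\underline C_{K,\psi}$ on the bias at scale $j$; (ii) the nonoverlapping supports let one sum the H\"older seminorms. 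If you were to flesh out this lemma you would reproduce the paper's proof essentially verbatim.
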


\subsection{Discussion}\label{discussion_sec}

The confidence band in Section \ref{achieving_bounds_sec_main} %
builds on the important work of
\cite{bull_honest_2012} and \cite{chernozhukov_anti-concentration_2014} in
constructing an upper bound on bias and using this to widen the confidence
interval
(see also \cite{knafl_model_1982,donoho_statistical_1994,armstrong_simple_2020}
for confidence intervals for $f$ at a point in the nonadaptive case).
In contrast to these papers, which
derive bounds on the bias of an estimator with bandwidth selected using Lepski's
method,
we bound the bias directly for each bandwidth and use the width of the resulting
confidence band to choose the bandwidth (note, however, that the two approaches are
related, since the bound on the bias ultimately comes from comparisons of
estimates at different bandwidths, either explicitly in our approach, or
implicitly through the use of Lepski's method to choose the bandwidth).
This makes it easier to derive explicit bounds, and it may be needed to get the
optimal form $C\varepsilon^{-1/(2\gamma+1)}$ of the adaptation penalty
(\cite{bull_honest_2012} and \cite{chernozhukov_anti-concentration_2014} show
that their procedures are adaptive up to a constant, but do not derive how this
constant depends on $\varepsilon$).

An alternative approach to ensuring coverage, used by
\cite{gine_confidence_2010}, is undersmoothing, which uses a bandwidth sequence
for which variance slightly dominates bias.  As noted by
\cite{bull_honest_2012} and \cite{chernozhukov_anti-concentration_2014}, this
leads to a slightly slower rate of convergence, so that the confidence band is
not fully adaptive.  
Our lower bounds shed some light on this question: one must
always pay an adaptation penalty of order $\varepsilon^{-1/(2\gamma+1)}$ when
$\varepsilon$ is fixed, which means that letting $\varepsilon=\varepsilon_n\to 0$ requires
paying a penalty in the rate.
In practice, however, 
for any given finite sample size $n$, one only achieves coverage over a class
$\Fselfsim$ corresponding to some $\varepsilon_n>0$; 
undersmoothed confidence bands choose such a sequence implicitly.
To make this transparent, one can explicitly specify $\varepsilon_n$, and report
a confidence band that is valid for the given self-similarity constant and noise
level, even if the ``asymptotic promise'' states that $\varepsilon_n\to 0$ (while our arguments do not formally cover the case where $\varepsilon=\varepsilon_n\to 0$, it appears that they could be extended to allow $\varepsilon_n\to 0$ at a slow enough rate).

There has been some discussion in the literature of whether or how
self-similarity conditions can lead to a practical approach to constructing
confidence bands.
If ``practical'' means that the confidence band should not
require the user to choose any regularity constants a priori, then our results
show that the answer is ``no.''  On the other hand, if one sees the
self-similarity constant as an interpretable object, then we need not be so
pessimistic.
Indeed, the confidence band we construct is ``practical'' in the
sense that it has valid coverage for a given noise level without relying on
conservative constants or sequences.

It is helpful to contrast the role of self-similarity conditions in our setting with
regularity conditions used to construct confidence intervals for the
mean of a univariate random variable.  To form a non-trivial confidence
interval for the mean of a univariate random variable, one must place some
conditions on the tails of the distribution (see \cite{bahadur_nonexistence_1956}).  One approach is to choose some $\delta>0$, and assume that
the $2+\delta$ moment is bounded by $1/\delta$.  Subject to this coverage
requirement, the optimal width of the confidence interval does not depend on
$\delta$ asymptotically: adding and subtracting the $1-\alpha/2$ quantile of a
normal distribution times the sample standard deviation leads to an
asymptotically valid confidence interval regardless of the particular choice
of $\delta>0$.  Thus, one can state that this confidence interval is
asymptotically valid and optimal under a bounded $2+\delta$ moment, without worrying about
the exact choice of $\delta$.  Our results show that this is not
the case with self-similarity constants: no single confidence band is
asymptotically valid and optimal under $\varepsilon$-self-similarity for all
$\varepsilon$.

\section{Proofs of Lower Bounds}\label{lower_bounds_proofs_sec}

This section proves
Theorems \ref{constant_adaptation_thm_general_kernel},
\ref{exponent_adaptation_thm_general_kernel} and
\ref{constant_adaptation_thm_alt_general_kernel}.
To prove these lower bounds, we proceed as follows.
Let $\widetilde{\mathcal{F}}(\gamma,B,a,b)$ denote the class of functions in
$\FHol(\gamma,B)$ supported on $[a,b]$:
\begin{align*}
  \widetilde{\mathcal{F}}(\gamma,B,a,b) = \{f\in\FHol(\gamma,B): f(t)=0 \text{ all } t\notin [a,b]\}.
\end{align*}
While functions in $\widetilde{\mathcal{F}}(\gamma,B,a,b)$ need not be
self-similar since this class does not impose a lower bound on bias, we can
ensure self-similarity by adding a function supported outside of
$[a,b]$ to this class, so long as this function satisfies the necessary upper
and lower bounds (after adjusting some constants).

Section \ref{general_lower_bound_sec} presents a lower bound for adaptation to
the singleton class $\{g\}$ for confidence bands with coverage under $g$ and
under the class $\{f\}+\widetilde{\mathcal{F}}(\gamma,B,a,b)$, for any functions
$f$ and $g$ supported outside of $[a,b]$.
Following standard arguments relating adaptive confidence sets to minimax
testing, such a bound follows so long as it is difficult to test
between $f$ and $g$ (which holds if $f$ and $g$ are close in $L_2$ norm), by
showing that it is difficult to test between $\{0\}$ (the zero function) and
functions in $\widetilde{\mathcal{F}}(\gamma,B,a,b)$ for which the supremum
over $[a,b]$ is sufficiently far from zero (which essentially follows from \cite{lepski_asymptotically_2000}).
Section \ref{constructing_functions_sec_new} constructs functions $g$ and $f$
such that the classes used in Section \ref{general_lower_bound_sec} satisfy the
self-similarity condition for appropriate $B$, $\gamma$ and $\varepsilon$, so
that the the lower bound in Section \ref{general_lower_bound_sec} can be used to
give bounds on adaptation between self-similarity classes.
For Theorems \ref{constant_adaptation_thm_general_kernel} and
\ref{constant_adaptation_thm_alt_general_kernel}, the functions $g$ and $f$ can
be taken to be equal, and the result follows almost immediately; Section
\ref{constant_adaptation_proof_sec} gives the necessary details to complete the
proofs.
To complete the proof of Theorem \ref{exponent_adaptation_thm_general_kernel},
we use the results in Section \ref{constructing_functions_sec_new} to construct
a function $g\in \Fselfsim(\gamma,1,\varepsilon')$ and a
sequence of functions $f_n$ converging to $g$ such that
$\{f_n\}+\widetilde{\mathcal{F}}(\gamma-\delta_n,1/2,a,b)\subseteq
\Fselfsim(\gamma-\delta_n,1,\varepsilon)$ where $\delta_n$ is a sequence
converging to zero.
Theorem \ref{exponent_adaptation_thm_general_kernel} then follows by using the lower bounds in Section \ref{general_lower_bound_sec} and choosing
the sequence $\delta_n$ to ensure that $f_n$ converges to $g$ quickly
enough, while making the testing problem for the class
$\widetilde{\mathcal{F}}(\gamma-\delta_n,1/2,a,b)$ sufficiently difficult.
These arguments are given in Section \ref{exponent_adaptation_proof_sec}.

\subsection{General Lower Bound}\label{general_lower_bound_sec}

In this section, we prove the following lower bound for adaptation between classes
of the form $\{g\}+\widetilde{\mathcal{F}}(\gamma,B,a,b)$.
For a function $f:\mathbb{R}\to\mathbb{R}$, let $\|f\|=\sqrt{\int f(t)^2\, dt}$
denote the $L_2$ norm of the function $f$.

\begin{lemma}\label{general_lower_bound_lemma}
  Let $a<b$ be given, and let $f_n$ and $g_n$ be sequences of functions with
  $f_n(t)=g_n(t)=0$ for $t\in [a,b]$.  Suppose $\|f_n-g_n\|/\sigma_n\to 0$.
  Let $0<\underline\gamma\le\overline\gamma$ be given, and let $\kappa$ be a
  function with finite support with $\kappa\in \FHol(\gamma,1)$ for all $\gamma\in (0,\overline\gamma]$.
  Let $B>0$ and let
  $C(\gamma,B,\kappa)=\left[\frac{4}{2\gamma+1}B^{1/\gamma}/\|\kappa\|^2\right]^{\frac{\gamma}{2\gamma+1}}\kappa(0)$.
  Then, for any sequence $\gamma_n\in [\underline\gamma,\overline\gamma]$ and
  any $0<2\alpha<\beta<1$,
  \begin{align*}
    R^*_{n,\alpha,\beta}&\left( \{g_n\}, \left\{ \{f_n\} + \widetilde{\mathcal{F}}(\gamma_n,B,a,b) \right\}\cup \{g_n\} \right)   \\
    &\ge C(\gamma_n,B,\kappa)\left(\sigma_n^2\log
   (1/\sigma_n)\right)^{\gamma_n/(2\gamma_n+1)}(1+o(1)).
  \end{align*}

\end{lemma}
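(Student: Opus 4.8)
The plan is to reduce the length lower bound to a minimax two-point/many-point testing problem, following the classical link between honest confidence sets and hypothesis testing. Since $\mathcal{C}_n(\cdot)$ must cover $g_n$ with probability $\ge 1-\alpha$, and must also cover every $f_n+h$ with $h\in\widetilde{\mathcal{F}}(\gamma_n,B,a,b)$ with probability $\ge 1-\alpha$, the interval $\mathcal{C}_n(x_0)$ at a fixed point $x_0\in(a,b)$ (where $g_n(x_0)=0$) must, on a high-probability event under $P_{g_n}$, be wide enough to simultaneously contain $0$ and the value $f_n(x_0)+h(x_0)=h(x_0)$ whenever the data "look like" they could have come from $f_n+h$. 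The standard device: if the band were too short under $P_{g_n}$ with high probability, one could build a test that distinguishes $\{0\}$ from the sub-family of $h$'s with $\sup_{[a,b]}|h|$ large, contradicting the fact that this testing problem has nontrivial error. Because $\|f_n-g_n\|/\sigma_n\to 0$, the laws $P_{f_n}$ and $P_{g_n}$ are asymptotically indistinguishable (their Hellinger/total-variation distance tends to $0$ in the white-noise model, where the log-likelihood ratio is Gaussian with variance $\|f_n-g_n\|^2/\sigma_n^2\to0$), so coverage under $P_{f_n+h}$ forces the band under $P_{g_n}$ to behave as if it also had to cover $f_n+h$; this is what lets us add the perturbation $f_n$ supported outside $[a,b]$ without cost.

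Next I would make the testing lower bound quantitative using the bump family generated by $\kappa$. For a resolution level $j$ (a bandwidth $2^{-j}$ to be optimized), place translated rescaled copies $h_{j,k}(t)=B\,2^{-j\gamma_n}\,\kappa\big(2^j(t-t_k)\big)$ at a grid of $\asymp 2^j$ well-separated points $t_k\in[a,b]$; each $h_{j,k}\in\widetilde{\mathcal{F}}(\gamma_n,B,a,b)$ because $\kappa\in\FHol(\gamma_n,1)$ has finite support and the H\"older seminorm scales by exactly $B\,2^{-j\gamma_n}\cdot(2^j)^{\gamma_n}=B$, and $\sup|h_{j,k}| = B\,2^{-j\gamma_n}\kappa(0)$ (assuming, WLOG after translating $\kappa$, that it attains its sup at $0$). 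The key fact — essentially Lepski--Tsybakov, and cited here as \cite{lepski_asymptotically_2000} — is that one cannot test $H_0:f=0$ against $H_1:f\in\{h_{j,k}\}$ with both error probabilities small unless $2^j$ is chosen so that $2^{j/2}\,$ (the number of hypotheses is $\asymp 2^j$, contributing a $\sqrt{\log 2^j}\asymp\sqrt{j}$ factor) times the signal-to-noise per bump stays bounded; concretely the threshold is $B\,2^{-j\gamma_n} \asymp \sigma_n\sqrt{j\,2^j}$, i.e. $2^{-j}\asymp(\sigma_n^2\log(1/\sigma_n))^{1/(2\gamma_n+1)}$, at which point $\sup|h_{j,k}|\asymp B\,2^{-j\gamma_n}$ is of the claimed order $C(\gamma_n,B,\kappa)(\sigma_n^2\log(1/\sigma_n))^{\gamma_n/(2\gamma_n+1)}$. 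The precise constant $C(\gamma_n,B,\kappa)=\big[\tfrac{4}{2\gamma_n+1}B^{1/\gamma_n}/\|\kappa\|^2\big]^{\gamma_n/(2\gamma_n+1)}\kappa(0)$ comes from tracking constants in this calibration: the exponent $B^{1/\gamma}$ inside is exactly the conversion of the H\"older budget into a bandwidth, $\|\kappa\|^2$ enters through the per-bump KL/$L_2$ energy $\|h_{j,k}\|^2 = B^2 2^{-2j\gamma_n}2^{-j}\|\kappa\|^2$, and $\kappa(0)$ is the final conversion from $L_\infty$-height of the bump to sup-length of the band; the factor $4/(2\gamma_n+1)$ and the $\log(1/\sigma_n)$ (rather than $\log n$) track the sharp constant in the many-hypotheses Fano/Lepski bound.

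Then I would assemble the pieces: fix the optimal $j=j_n$, suppose for contradiction a band $\mathcal{C}_n\in\mathcal{I}_{n,\alpha,\beta,\mathcal{F}}$ has $R_\beta(\mathcal{C}_n;\{g_n\})$ strictly below $(1-\delta)$ times the claimed rate for some $\delta>0$ along a subsequence; then with $P_{g_n}$-probability $\ge\beta>2\alpha$ the band at $x_0$ has length below $(1-\delta)\sup_k|h_{j_n,k}(x_0')|$ (choosing the grid so some $t_k$ is near a fixed interior point $x_0$); since it must cover $0$ (coverage at $g_n$) with probability $\ge1-\alpha$ and cover $h_{j_n,k}(x_0)$ under $P_{f_n+h_{j_n,k}}$ — hence, by $\|f_n-g_n\|/\sigma_n\to0$ and a contiguity argument, with probability $\ge1-\alpha-o(1)$ also under $P_{g_n}$ — with probability $\ge 1-\alpha$, these three events (short band, covers $0$, covers the bump value) together have $P_{g_n}$-probability bounded below by $\beta-2\alpha-o(1)>0$, yet they are incompatible once the band is too short to span from $0$ to $h_{j_n,k}(x_0)$; carrying this out for the worst (or a random) $k$ produces a test of $\{0\}$ vs $\{h_{j_n,k}\}$ with total error $<1$, contradicting the Lepski bound for the calibrated $j_n$. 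I expect the main obstacle to be the contiguity/perturbation step — making rigorous that coverage requirements at $f_n+h$ transfer to statements under $P_{g_n}$ uniformly over the $\asymp 2^{j_n}$ bumps, with the $o(1)$ errors genuinely negligible against the fixed gap $\beta-2\alpha$ — together with extracting the sharp constant $C(\gamma_n,B,\kappa)$ rather than merely the rate, which requires the asymptotically-exact (not just order-of-magnitude) form of the many-hypotheses testing lower bound and careful bookkeeping of how $\gamma_n$ (which drifts in a compact set) enters every constant uniformly.
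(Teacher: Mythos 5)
Your overall strategy matches the paper's: reduce the length lower bound to a minimax testing bound (the paper's Lemma~\ref{testing_ci_lemma_new}, a nonasymptotic version of Robins--van der Vaart), dispose of the $f_n$ vs.\ $g_n$ offset using $\|f_n-g_n\|/\sigma_n\to 0$, and then invoke the Lepski many-hypothesis lower bound for a bump family $B\,h_n^{\gamma_n}\kappa((x-x_{n,k})/h_n)$ in $\widetilde{\mathcal{F}}(\gamma_n,B,a,b)$, calibrating $h_n$ so that $(s_n^2/\sigma_n^2)/(2\log M_n)\le 1-\delta$, which is exactly what produces the constant $C(\gamma_n,B,\kappa)$ with the factor $4/(2\gamma_n+1)$.

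However, one step in your third paragraph would fail if carried out as written. You claim that coverage of $h_{j_n,k}(x_0)$ under $P_{f_n+h_{j_n,k}}$ transfers, ``by $\|f_n-g_n\|/\sigma_n\to 0$ and a contiguity argument,'' to probability $\ge 1-\alpha-o(1)$ under $P_{g_n}$. That transfer is false: the relevant distance is $\|f_n+h_{j_n,k}-g_n\|/\sigma_n\approx\|h_{j_n,k}\|/\sigma_n$, which is bounded away from zero by the very calibration you have chosen (it is at the detection threshold), so $P_{f_n+h_{j_n,k}}$ and $P_{g_n}$ are \emph{not} mutually contiguous. The paper avoids this by keeping the perturbation at the level of the test functional rather than at the level of per-hypothesis coverage probabilities: it first proves that a short band yields a test of $\{g_n\}$ against the whole family $\{f_n\}+\mathcal{H}_n$ with $\dtest\ge\beta-2\alpha$ (Lemma~\ref{testing_ci_lemma_new}), and then separately shows $\dtest(\{g_n\},\{f_n\}+\mathcal{H}_n)\le\dtest(\{0\},\mathcal{H}_n)+\|f_n-g_n\|/\sigma_n$ by shifting the observation by $g_n$ and applying the Neyman--Pearson/TV bound (Lemma~\ref{F_plus_f0_testing_lemma_new}). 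Your first paragraph actually states the right intuition (``coverage under $P_{f_n+h}$ forces the band under $P_{g_n}$ to behave as if it also had to cover $f_n+h$''), but the way you unpack it later is the incorrect one. To fix the proposal, replace the per-$k$ contiguity claim with the two-step decomposition: (i) short band $\Rightarrow$ test of $\{g_n\}$ vs.\ $\{f_n\}+\mathcal{H}_n$ with gap $\ge\beta-2\alpha$; (ii) $\dtest(\{g_n\},\{f_n\}+\mathcal{H}_n)\le \dtest(\{0\},\mathcal{H}_n)+\|f_n-g_n\|/\sigma_n\to\dtest(\{0\},\mathcal{H}_n)$, after which the Lepski bound gives the contradiction.
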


To prove this result, we begin with a lemma relating $R^*_{n,\alpha,\beta}$ to minimax
bounds on statistical hypothesis tests.
For sets $\mathcal{F}$ and $\mathcal{G}$, let $\dtest(\mathcal{F},\mathcal{G})$ denote the maximum difference between minimax power and size of a test of $H_0:\mathcal{F}$ vs $H_1:\mathcal{G}$:
\begin{align*}
\dtest(\mathcal{F},\mathcal{G})
=\sup_{\phi} \inf_{f\in\mathcal{F},\, g\in\mathcal{G}} |E_g\phi(Y)-E_f\phi(Y)|
\end{align*}
where $E_f$ denotes expectation under the function $f$, and the supremum is over
all tests $\phi$ based on $Y$ observed at noise level $\sigma_n$ (i.e. all
measurable functions with range $[0,1]$).
The following lemma is essentially Lemma 6.1 in \cite{robins_adaptive_2006}, with the conclusion of the argument stated nonasymptotically.

\begin{lemma}\label{testing_ci_lemma_new}
Let $\alpha,\beta$ and $\tilde R$ be given and let $\mathcal{G}\subseteq \mathcal{F}$.  Suppose that
\begin{align*}
\text{for some }f_0\in\mathcal{G},\,
\dtest\left(\{f_0\},\mathcal{F}\cap \{f:\sup_{x\in[0,1]} |f(x)-f_0(x)|\ge \tilde R\}\right) 
<\beta-2\alpha.
\end{align*}
Then $R^*_{n,\alpha,\beta}(\mathcal{G},\mathcal{F})\ge R^*_{n,\alpha,\beta}(\{f_0\},\mathcal{F})\ge \tilde R$.
\end{lemma}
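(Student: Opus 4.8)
The plan is to prove Lemma~\ref{testing_ci_lemma_new} by contraposition: assuming a confidence band $\mathcal{C}_n(\cdot)\in\mathcal{I}_{n,\alpha,\mathcal{F}}$ whose worst-case length over $\{f_0\}$ is below $\tilde R$ with probability at least $\beta$, we manufacture a test between $\{f_0\}$ and the ``far away'' alternatives that has minimax power minus size at least $\beta-2\alpha$, contradicting the hypothesis. So suppose for contradiction that $R^*_{n,\alpha,\beta}(\{f_0\},\mathcal{F})<\tilde R$ (the inequality $R^*_{n,\alpha,\beta}(\mathcal{G},\mathcal{F})\ge R^*_{n,\alpha,\beta}(\{f_0\},\mathcal{F})$ is immediate from $\mathcal{G}\subseteq\mathcal{F}$ and $f_0\in\mathcal{G}$, since enlarging the class over which length is evaluated can only increase the infimand, while the coverage constraint is unchanged — actually one should be slightly careful about direction: $\{f_0\}\subseteq\mathcal{G}$ so $R_\beta(\mathcal{C}_n;\{f_0\})\le R_\beta(\mathcal{C}_n;\mathcal{G})$ for every band, hence the infimum over the same constraint set $\mathcal{I}_{n,\alpha,\mathcal{F}}$ is monotone). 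Then there is a band $\mathcal{C}_n$ with coverage $1-\alpha$ over $\mathcal{F}$ and $q_{\beta,f_0}(\sup_x \length(\mathcal{C}_n(x)))<\tilde R$, i.e. under $P_{f_0}$ the event $E=\{\sup_{x\in[0,1]}\length(\mathcal{C}_n(x))<\tilde R\}$ has probability at least $\beta$.

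Next I would define the test $\phi(Y)=\mathbf{1}\{f_0(x)\notin\mathcal{C}_n(x)\text{ for some }x\in[0,1]\text{, or } \sup_x\length(\mathcal{C}_n(x))\ge\tilde R\}$; equivalently, reject $H_0$ unless the band both covers $f_0$ everywhere and is short everywhere. Under $f_0$, coverage fails with probability at most $\alpha$ and shortness fails with probability at most $1-\beta$ (by definition of $E$), so $E_{f_0}\phi(Y)\le \alpha+(1-\beta)$. Now take any $g\in\mathcal{F}$ with $\sup_{x}|g(x)-f_0(x)|\ge\tilde R$. On the event that $\mathcal{C}_n$ is short, $\length(\mathcal{C}_n(x))<\tilde R$ for all $x$, so if $\mathcal{C}_n(x)$ is an interval containing $g(x)$ it cannot also contain $f_0(x)$ at the point $x^*$ where $|g(x^*)-f_0(x^*)|\ge\tilde R$ — hence either the band misses $g$ somewhere or it is not short, and since $g\in\mathcal{F}$ the coverage guarantee gives $P_g(\mathcal{C}_n\text{ covers }g\text{ everywhere})\ge 1-\alpha$, so $E_g\phi(Y)\ge 1-\alpha$. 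Therefore $\inf_{g}E_g\phi(Y)-E_{f_0}\phi(Y)\ge (1-\alpha)-(\alpha+1-\beta)=\beta-2\alpha$, where the infimum is over $g\in\mathcal{F}\cap\{f:\sup_x|f(x)-f_0(x)|\ge\tilde R\}$. This contradicts the assumed bound $\dtest(\{f_0\},\mathcal{F}\cap\{\cdots\})<\beta-2\alpha$, completing the proof.

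The main subtlety — the one step worth stating carefully rather than waving through — is the geometric fact that a short band cannot simultaneously cover two functions that are far apart at some point: this uses that $\mathcal{C}_n(x)$ is an \emph{interval}, so containing both $f_0(x^*)$ and $g(x^*)$ would force $\length(\mathcal{C}_n(x^*))\ge|g(x^*)-f_0(x^*)|\ge\tilde R$, contradicting $\sup_x\length(\mathcal{C}_n(x))<\tilde R$. (If one wanted the strict-versus-nonstrict inequalities to line up exactly, one notes $q_{\beta,f_0}(L)<\tilde R$ gives $P_{f_0}(L<\tilde R)\ge\beta$ via right-continuity of the quantile, or one simply works with $\le$ throughout and absorbs an $\varepsilon$; this is the only place where strict inequalities need minor bookkeeping.) A secondary point is measurability of $\phi$, which is why the paper restricts attention to confidence bands for which these events are measurable — I would simply invoke that the relevant sup over $x\in[0,1]$ can be taken over a countable dense set by whatever regularity (continuity/monotone structure) the band construction provides, or cite that this is part of the standing definition of a confidence band. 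Everything else is bookkeeping with the definitions of $R^*_{n,\alpha,\beta}$, $q_{\beta,f}$, and $\dtest$.
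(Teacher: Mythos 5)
Your proof is correct, and it reaches the same conclusion by a genuinely different (mirror-image) test construction. The paper's proof defines $\phi$ to reject when some far-away $f\in\mathcal{F}$ with $\sup_x|f(x)-f_0(x)|\ge\tilde R$ lies entirely inside the band. For that test, the power bound $E_g\phi\ge 1-\alpha$ under a far $g$ is immediate (coverage of $g$ makes $g$ itself a witness), and the geometric observation that a band cannot contain both $f_0$ and a far function without having $\sup_x\length(\mathcal{C}_n(x))\ge\tilde R$ is used in bounding the \emph{size}: $E_{f_0}\phi\le\alpha+(1-\beta)$. Your test instead rejects when the band misses $f_0$ somewhere or has $\sup_x\length(\mathcal{C}_n(x))\ge\tilde R$; for this test the \emph{size} bound is the immediate union bound, while the geometric observation is what drives the \emph{power} bound. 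The two tests satisfy $\phi_{\text{paper}}\le\phi_{\text{yours}}$ pointwise (yours is more aggressive), and the power/size roles of the ``short band cannot cover two far-apart functions'' step are exactly swapped. A mild practical advantage of your test is that it is directly measurable from the band's coverage and length statistics without having to decide whether some element of an infinite class fits inside the band; the paper's test sidesteps this implicitly via the measurability conventions on confidence bands. Your bookkeeping remarks about the quantile and about extracting $x^*$ from the supremum are both resolvable exactly as you indicate (work with $t$ strictly between $R$ and $\tilde R$, and pick $x^*$ with $|g(x^*)-f_0(x^*)|>\sup_x\length(\mathcal{C}_n(x))$ rather than insisting the sup be attained); the paper avoids the latter by comparing suprema directly, writing $\sup_x\length(\mathcal{C}_n(x))\ge\sup_x|f_1(x)-f_0(x)|\ge\tilde R$, which you may prefer for cleanliness.
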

\begin{proof}
Suppose, to get a contradiction, that
$R^*_{n,\alpha,\beta}(\{f_0\},\mathcal{F})< \tilde R$.
Then there exists a confidence band $\mathcal{C}_n(\cdot)\in
\mathcal{I}_{n,\alpha,\mathcal{F}}$ with 
$R=R_\beta(\mathcal{C}_n;\{f_0\})=q_{\beta,f_0}\left(\sup_{x\in[0,1]}\length(\mathcal{C}_n(x))\right)<\tilde
R$, so that
\begin{equation}\label{length_quantile_eq}
  \begin{aligned}
    &P_{f_0}\left(\sup_{x\in[0,1]}\length\left(\mathcal{C}_n(x)\right)> R \right)  \\
    &= 1-P_{f_0}\left(\sup_{x\in[0,1]}\length\left(\mathcal{C}_n(x)\right)\le R \right)\le 1-\beta. 
  \end{aligned}
\end{equation}
Let us abuse notation slightly and let $\mathcal{C}_n$ denote the set of functions $f$ contained in the confidence band $\mathcal{C}_n(\cdot)$, so that $f\in\mathcal{C}_n$ iff. $f(t)\in \mathcal{C}_n(t)$ all $t\in [0,1]$.
Let $\phi=1$ if there exists a function $f$ satisfying $f\in\mathcal{F}\cap
\{f:\sup_{x\in[0,1]}|f(x)-f_0(x)|\ge \tilde R\}$ with $f\in \mathcal{C}_n$.
It is immediate from the definition of this test and the assumption that
$\mathcal{C}_n(\cdot)\in \mathcal{I}_{n,\alpha,\mathcal{F}}$ that  
\begin{align}\label{test_ci_lemma_power_eq}
 \inf_{f\in\mathcal{F}\cap \{f:\sup_{x\in[0,1]} |f(x)-f_0(x)|\ge \tilde R\}} E_f\phi\ge
1-\alpha 
\end{align}
 (i.e. the test has minimax power at least $1-\alpha$ for
$H_1:\mathcal{F}\cap \{f: \sup_{x\in[0,1]}|f(x)-f_0(x)|\ge \tilde R\}$). 

Now consider the level of the test for $H_0:\{f_0\}$.  We have
\begin{align*}
&E_{f_0}\phi(Y)=E_{f_0}\phi(Y)I(f_0\in\mathcal{C}_n)+E_{f_0}\phi(Y)I(f_0\notin\mathcal{C}_n) 
  \le E_{f_0}\phi(Y)I(f_0\in\mathcal{C}_n)+\alpha
\end{align*}
by the converage condition.  The event $\phi(Y)I(f_0\in\mathcal{C}_n)$ implies
that $\mathcal{C}_n$ contains both $f_0$ and a function $f_1$ with
$f_1\in\mathcal{F}$ and $\sup_{x\in[0,1]}|f_1(x)-f_0(x)|\ge \tilde R$.
This, in turn, implies that $\sup_{x\in[0,1]}\length(\mathcal{C}_n(x))\ge \tilde
R>R$ on this event so that, by (\ref{length_quantile_eq}), the probability of
this event under $f_0$ is bounded by $1-\beta$.  Thus, by the above display,
$E_{f_0}\phi(Y)\le 1-\beta+\alpha$.  Combining this with
(\ref{test_ci_lemma_power_eq}), it follows that $\inf_{f\in\mathcal{F}\cap \{f:\sup_{x\in[0,1]} |f(x)-f_0(x)|\ge \tilde R\}} E_f\phi-E_{f_0}\phi\ge
1-\alpha-1+\beta-\alpha=\beta-2\alpha$, which contradicts the assumptions of the theorem.
\end{proof}

To deal with minimax tests over classes that add functions $f_n$ and $g_n$, we
will also need the following lemma.

\begin{lemma}\label{F_plus_f0_testing_lemma_new}
For any functions $f_0$ and $g_0$ and sets $\mathcal{F}$ and $\mathcal{G}$,
\begin{align*}
&\dtest(\mathcal{F}+\{f_0\},\mathcal{G}+\{g_0\})
=\dtest(\mathcal{F},\mathcal{G}+\{g_0-f_0\})  \\
&\le \dtest(\mathcal{F},\mathcal{G}) + \sup_{\alpha}\left[\Phi\left(\|f_0-g_0\|/\sigma_n-z_{1-\alpha}\right)-\alpha\right]
\le \dtest(\mathcal{F},\mathcal{G}) + \|f_0-g_0\|/\sigma_n.
\end{align*}
\end{lemma}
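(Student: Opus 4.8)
The plan is to establish the first equality by a change-of-variables argument on tests, then bound the resulting $\dtest$ by splitting off the deterministic shift $g_0 - f_0$ via a likelihood-ratio (Neyman--Pearson) estimate for the white noise model. For the first step, note that in the white noise model $Y(t) = \int_0^t f(s)\,ds + \sigma_n W(t)$, observing $Y$ when the true function is $f + f_0$ has the same law as observing $\tilde Y := Y - \int_0^\cdot f_0(s)\,ds$ when the true function is $f$; this map $Y \mapsto \tilde Y$ is a measurable bijection not depending on the unknown $f$. Hence for any test $\phi$ of $H_0: \mathcal{F} + \{f_0\}$ versus $H_1: \mathcal{G} + \{g_0\}$, the test $\phi'(Y) := \phi(Y + \int_0^\cdot f_0(s)\,ds)$ satisfies $E_{f}\phi' = E_{f + f_0}\phi$ for all $f$ and $E_{g + (g_0 - f_0)}\phi' = E_{g + g_0}\phi$ for all $g$, and this correspondence is a bijection on tests. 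Taking suprema over $\phi$ gives $\dtest(\mathcal{F} + \{f_0\}, \mathcal{G} + \{g_0\}) = \dtest(\mathcal{F}, \mathcal{G} + \{g_0 - f_0\})$.

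For the inequality, write $h := g_0 - f_0$ and fix any test $\phi$. The quantity to control is $\inf_{f \in \mathcal{F},\, g \in \mathcal{G}} |E_{g + h}\phi - E_{f}\phi|$. I would add and subtract $E_g \phi$ and use the triangle inequality: $|E_{g+h}\phi - E_f\phi| \le |E_g\phi - E_f\phi| + |E_{g+h}\phi - E_g\phi|$. The first term is bounded (after taking $\inf$ and $\sup$) by $\dtest(\mathcal{F}, \mathcal{G})$. For the second, $|E_{g+h}\phi - E_g\phi|$ is the difference in expectation of a $[0,1]$-valued statistic under two Gaussian shift measures that differ by the deterministic shift $h$; by the Neyman--Pearson lemma this is maximized, over all such $\phi$, by a likelihood-ratio test, and by Girsanov/Cameron--Martin the log-likelihood ratio between $P_{g+h}$ and $P_g$ is a Gaussian with mean $\pm \tfrac{1}{2}\|h\|^2/\sigma_n^2$ and variance $\|h\|^2/\sigma_n^2$. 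The resulting maximal mean difference over level-$\alpha$ rejection regions is $\Phi(\|h\|/\sigma_n - z_{1-\alpha}) - \alpha$ optimized over $\alpha$; this gives the middle bound. The final bound $\sup_\alpha[\Phi(\|h\|/\sigma_n - z_{1-\alpha}) - \alpha] \le \|h\|/\sigma_n$ follows from the elementary fact that $\Phi(u - z_{1-\alpha}) - \Phi(-z_{1-\alpha}) \le \phi(0)\, u \le u$ for $u \ge 0$ (mean value theorem, since the standard normal density is bounded by $\phi(0) = 1/\sqrt{2\pi} < 1$), applied with $u = \|h\|/\sigma_n$ and $\alpha = \Phi(-z_{1-\alpha})$.

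The main obstacle — more bookkeeping than genuine difficulty — is making the ``difference in expectation under two Gaussian shifts is bounded by total-variation-type distance'' step precise and tight enough to land exactly the $\Phi(\|h\|/\sigma_n - z_{1-\alpha}) - \alpha$ form rather than a looser constant. The clean way is: for any event $A$, $P_{g+h}(A) - P_g(A) \le \sup\{P_{g+h}(A) - P_g(A) : P_g(A) = \alpha\}$ over the relevant $\alpha$, and the inner sup is attained by the likelihood-ratio region $\{dP_{g+h}/dP_g > c\}$, whose two probabilities are exactly $\Phi(\|h\|/\sigma_n - z_{1-\alpha})$ and $\alpha$ by the explicit Gaussian form of the log-likelihood ratio; since $\phi$ takes values in $[0,1]$, $E_{g+h}\phi - E_g\phi \le \sup_A (P_{g+h}(A) - P_g(A))$ by writing $\phi$ as a mixture of indicators (layer-cake). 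One should also record that the bound is symmetric in swapping $g+h$ and $g$, so the absolute value is handled. Everything else — the bijection-on-tests argument and the final elementary inequality — is routine once the white noise likelihood ratio is written down.
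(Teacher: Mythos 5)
Your proposal is correct and follows essentially the same route as the paper: the change-of-variables bijection on tests for the equality, the triangle inequality through $E_g\phi$ for the decomposition, the Neyman--Pearson characterization of the Gaussian-shift testing problem for the middle bound, and the mean value theorem (using $\sup_x\Phi'(x)=1/\sqrt{2\pi}<1$) for the final bound. The only difference is that you work out the likelihood-ratio calculation explicitly whereas the paper cites Example 2.1 of Ingster and Suslina; you also correctly identify that the paper's in-proof expression should read $\Phi(-z_{1-\alpha})=\alpha$ rather than $\Phi(z_{1-\alpha})$.
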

\begin{proof}
The first equality follows since $f_0$ can be added or subtracted from $Y$ before performing any test, so that the supremum over tests $\phi(Y)$ is the same as the supremum over tests $\phi(Y-f_0)$.  For the first inequality, note that
\begin{align*}
\dtest(\mathcal{F},\mathcal{G}+\{g_0-f_0\})
=\sup_{\phi} \inf_{f\in\mathcal{F},\, g\in\mathcal{G}} |E_{g+f_0-g_0}\phi(Y)-E_f\phi(Y)|  \\
\le \sup_{\phi} \inf_{f\in\mathcal{F},\, g\in\mathcal{G}} \left[|E_{g+f_0-g_0}\phi(Y)-E_{g}\phi(Y)|+|E_{g}\phi(Y)-E_f\phi(Y)|\right].
\end{align*}
For any $g$, the first term is bounded by
$\sup_{\phi}|E_{g+f_0-g_0}\phi(Y)-E_{g}\phi(Y)|$
which, using the Neyman-Pearson lemma and some calculations (see Example 2.1 in
\cite{ingster_nonparametric_2003}), can be seen to be equal to
\begin{align*}
\sup_{\alpha}\left[\Phi\left(\|f_0-g_0\|/\sigma_n-z_{1-\alpha}\right)-\Phi(z_{1-\alpha})\right]
\le \|f_0-g_0\|/\sigma_n,
\end{align*}
where the inequality follows from Taylor's theorem, since the derivative of the standard normal cdf is bounded by $1/\sqrt{2\pi}\le 1$.
\end{proof}

With these results in hand, we can now complete the proof of Lemma \ref{general_lower_bound_lemma}.
Let $c_n=C(\gamma_n,B,\kappa)\left(\sigma_n^2\log
  (1/\sigma_n)\right)^{\gamma_n/(2\gamma_n+1)}$.  Given $\eta>0$, let
\begin{align*}
  \mathcal{H}_n&= \left\{ \{f_n\} + \widetilde{\mathcal{F}}(\gamma_n,B,a,b) \right\} \cap \{f: \sup_{x\in [0,1]} |f(x)-g_n(x)| \ge (1-\eta)c_n\}.
\end{align*}
By Lemma \ref{testing_ci_lemma_new}, the result will follow if we show that
$\dtest(\{g_n\},\mathcal{H}_n)\to 0$.
Furthermore, using the fact that $g_n$ and $f_n$ are supported outside $[a,b]$,
it follows that
$\{f_n\} + \widetilde{\mathcal{F}}(\gamma_n,B,a,b) \cap
\{f: \sup_{x\in [a,b]} |f(x)| \ge (1-\eta)c_n\} \subseteq \mathcal{H}_n$.  Since
taking a smaller set increases $\dtest$, it follows by 
Lemma \ref{F_plus_f0_testing_lemma_new}, that
$\dtest(\{g_n\},\mathcal{H}_n)$ is bounded by
\begin{align*}
  \dtest\left(\{0\},\widetilde{\mathcal{F}}(\gamma_n,B,a,b) \cap \{f: \sup_{x\in [a,b]}
 |f(x)| \ge (1-\eta) c_n \} \right) + \|f_n-g_n\|/\sigma_n.
\end{align*}
Since the second term converges to zero by assumption, it suffices to bound the
first term.

\begin{sloppypar}
To this end, we follow arguments on pp. 34-36 of \cite{lepski_asymptotically_2000}.
Let $A_\kappa$ be a bound on the support of $\kappa$ and let
\begin{align*}
&h_n=\left(\frac{(1-\eta)C({\gamma_n},B,\kappa)}{B\kappa(0)}\right)^{1/{\gamma_n}}\left(\sigma_n^2\log (1/\sigma_n)\right)^{1/(2{\gamma_n}+1)},  \\
&M_n=\left\lfloor\frac{b-a}{2 A_\kappa h_n}\right\rfloor-1,\quad
x_{n,k}=a+(2k-1)A_\kappa h_n,\quad k=1,\ldots, M_n  \\
&f_{k,n}(x)=B h_n^{\gamma_n} \kappa\left(\frac{x-x_{n,k}}{h_n}\right).
\end{align*}
By construction, the support of each $f_{k,n}$ is nonoverlapping and contained in $[a,b]$.
Also, the variance of $\int f_{k,n}(x)\, dY(x)$ is
\begin{align*}
B^2 h_n^{2{\gamma_n}} \int \kappa\left(\frac{x-x_{n,k}}{h_n}\right)^2\, dx
=B^2 h_n^{2{\gamma_n}+1} \int \kappa(u)^2\, d u=:s_n^2.
\end{align*}
Following arguments on pp. 35-36 of \cite{lepski_asymptotically_2000}, it will
then follow that
$\dtest(\{0\},\{f_{n,1},f_{n,2},\ldots,f_{n,M_n}\})\to 0$
so long as there exists $\delta>0$ such that, for large enough $n$,
$(s_n^2/\sigma_n^2)/(2\log M_n)\le (1-\delta)$.
Since each $f_{k,n}$ is contained in the set
$\widetilde{\mathcal{F}}(\gamma_n,B,a,b)\cap \{f:\sup_{x\in [a,b]}|f(x)|=(1-\eta)c_n\}$,
this will complete the proof.
\end{sloppypar}

For large enough $n$, we have $M_n\ge (b-a)/(3A_\kappa h_n)$ so that
\begin{align*}
2\log M_n&\ge 2\log h_n^{-1}+2\log [(b-a)/(3A_\kappa)]
  =\left(\frac{4}{2\gamma_n+1}+o(1)\right)\log (1/\sigma_n).
\end{align*}
We have
\begin{align*}
&\frac{s_n^2}{\sigma_n^2}=B^2 \|\kappa\|^2 h_n^{2{\gamma_n}+1}\sigma_n^{-2}
=B^2 \|\kappa\|^2
\left(\frac{(1-\eta)C({\gamma_n},B,\kappa)}{B\kappa(0)}\right)^{(2{\gamma_n}+1)/{\gamma_n}}\log (1/\sigma_n)  \\
&=(1-\eta)^{(2{\gamma_n}+1)/{\gamma_n}}\frac{4}{2{\gamma_n}+1} \log (1/\sigma_n).
\end{align*}
Thus, for $\delta$ smaller than a constant that depends only on $\overline
\gamma$ and $\underline \gamma$, we have, for $n$ large enough,
$(s_n^2/\sigma_n^2)/(2\log M_n)\le (1-\delta)$.

\subsection{Constructing Functions in Self-Similarity Classes}\label{constructing_functions_sec_new}

The main result of this section is to construct functions $g$ such that the class
$\{g\}+\widetilde{\mathcal{F}}(\gamma,B,a,b)$ satisfies the self-similarity
condition.
We first describe the construction, and then present the main lemma (Lemma
\ref{tilde_g_tilde_f_lemma_new}) showing self-similarity of these functions.
The remainder of this section is then devoted to the proof of Lemma
\ref{tilde_g_tilde_f_lemma_new}.

Let $\psi:\mathbb{R}\to\mathbb{R}$ be a function with $\|\psi\|=1$ with support
contained in $(-\psisupp,\psisupp)$ where $\psisupp<\infty$.  
Let $\psi_{\ell
  k}(x)=2^{\ell/2}\psi(2^\ell x - k)$.
We will consider functions that take the form
\begin{align}\label{f_tilde_beta_eq_new}
  f_{\{\tilde\beta\},\underline\ell}(x) = \sum_{\ell=\underline \ell}^\infty \tilde\beta_{\ell} \psi_{\ell k^*}(x),
\end{align}
for integers $k^*,\underline \ell$, chosen large enough to satisfy conditions
given below.
Given $0<\varepsilon<1$ and $0<\gamma-\delta\le
\gamma<\infty$, let
$\tilde f_{\underline \ell,\gamma,\delta,\varepsilon,1}$
be defined as in (\ref{f_tilde_beta_eq_new}) with
\begin{align*}
 \tilde \beta_{\ell}=\max \{2^{-\ell(\gamma+1/2)},\varepsilon 2^{-\ell(\gamma-\delta+1/2)}\}.
\end{align*}
Let $\tilde g_{\underline \ell,\gamma,1}$
be defined as in (\ref{f_tilde_beta_eq_new}) with
\begin{align*}
 \tilde \beta_{\ell}= 2^{-\ell(\gamma+1/2)}.
\end{align*}
Let $\tilde f_{\underline \ell,\gamma,\delta,\varepsilon,A}(x)=A \tilde
f_{\underline \ell,\gamma,\delta,\varepsilon,1}(x)$ and let
$\tilde g_{\underline \ell,\gamma,A}(x)=A\tilde g_{\underline \ell,\gamma,1}(x)$.

To get some intuition for this construction, note that,
if $\psi$ is a mother wavelet for some wavelet basis, then a function
constructed in this way has $\ell, k$th
wavelet coefficient given by $\tilde\beta_{\ell}$ for $\ell \ge \underline\ell$
and $k=k^*$ and $\ell, k$th wavelet coefficent $0$ for all other $\ell, k$.  If
the kernel $K$ in the self-similarity condition is the wavelet projection kernel
for this basis, self-similarity of $\tilde g_{\underline \ell,\gamma,A}$ and $\tilde f_{\underline \ell,\gamma,\delta,\varepsilon,A}$
would follow from standard calculations.
However, relying on such arguments would rule out convolution kernels, and would
also present an issue for nonsmooth wavelets (since we impose a
H\"{o}lder condition in addition to the bounds on bias).

We now present the main result of this section, showing that, if $k^*$ and
$\underline \ell$ are chosen appropriately, adding $\tilde g_{\underline
  \ell,\gamma,A}$ and $\tilde f_{\underline \ell,\gamma,\delta,\varepsilon,A}$
to functions in the classes $\widetilde{\mathcal{F}}(\gamma,B,a,b)$ yields self-similar
functions.
Let
$\underline C_{K,\psi}=\sup_{x\in\mathbb{R}}|K_0\psi(x)-\psi(x)|>0$.
Let $\|f\|_\infty=\sup_{t\in\mathbb{R}}|f(t)|$ denote the $L_\infty$ norm, and let
$\overline C_{K,\psi,\gamma}= 2 \|\psi^{(\lfloor \gamma \rfloor+1)}\|_{\infty}
(2C_{\psi})^{1-(\gamma-\lfloor \gamma \rfloor)}$.
Note that $\psi$ can be chosen so that $\overline C_{K,\psi,\gamma}$ is bounded
from above over $\gamma\le\overline\gamma$, and so that $\underline C_{K,\psi}>0$.

\begin{lemma}\label{tilde_g_tilde_f_lemma_new}
  Let $0<a<b$, $A>0$ and $\tilde B\ge 0$ be given, and let $K$ be a kernel that satisfies
  (\ref{kernel_lower_bound_assump}).
  Let $k^*>4(\psisupp+\Ksupp)$, and let $\underline\ell$ be large enough so that
  $2^{-\underline\ell}(k^*+\psisupp+\Ksupp)<a$.
  Then,
  for any $A^*\ge \overline C_{K,\psi,\gamma}A+\tilde B$ and $\varepsilon^*\le \underline C_{K,\psi}A/A^*$,
  \begin{align*}
    \widetilde{\mathcal{F}}(\gamma,\tilde B,a,b) + \{\tilde g_{\underline \ell,\gamma,A}\}
      \subseteq\Fselfsim\left( \gamma, A^*, \varepsilon^*; K, \underline\ell \right).
  \end{align*}
  For any $A^*\ge \overline C_{K,\psi,\gamma-\delta}A+\tilde B$,
  $0<\delta<\gamma$
  and $\varepsilon^*\le \tilde\varepsilon \underline C_{K,\psi}A/A^*$,
  \begin{align*}
    \widetilde{\mathcal{F}}(\gamma-\delta,\tilde B,a,b) + \{\tilde f_{\underline \ell,\gamma,\delta,\tilde\varepsilon,A}\}
    \subseteq \Fselfsim\left( \gamma-\delta, A^*, \varepsilon^*; K, \underline\ell \right).
  \end{align*}
\end{lemma}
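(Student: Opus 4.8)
\emph{Plan.} I would handle both inclusions in parallel by writing a generic element of the left-hand side as $h=f+\tilde h$, where $\tilde h$ is $\tilde g_{\underline\ell,\gamma,A}$ (resp.\ $\tilde f_{\underline\ell,\gamma,\delta,\tilde\varepsilon,A}$) and $f$ lies in $\widetilde{\mathcal{F}}(\gamma,\tilde B,a,b)$ (resp.\ $\widetilde{\mathcal{F}}(\gamma-\delta,\tilde B,a,b)$), and then checking the two defining properties of the relevant self-similarity class: (i) $h\in\FHol(\gamma,A^*)$ (resp.\ $\FHol(\gamma-\delta,A^*)$); and (ii) for every integer $j\ge\underline\ell$, $\sup_{t\in[0,1]}|K_jh(t)-h(t)|\ge\varepsilon^*A^*2^{-j\gamma}$ (resp.\ with exponent $\gamma-\delta$). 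No upper bias bound needs checking, since $\Fselfsim$ is defined as $\FHol$ intersected with the lower bias bound only. The first observation, after absorbing the $2^{\ell/2}$ factor in (\ref{f_tilde_beta_eq_new}), is that $\tilde g_{\underline\ell,\gamma,A}(x)=A\sum_{\ell\ge\underline\ell}2^{-\ell\gamma}\psi(2^\ell x-k^*)$ and $\tilde f_{\underline\ell,\gamma,\delta,\tilde\varepsilon,A}(x)=A\sum_{\ell\ge\underline\ell}\beta_\ell\psi(2^\ell x-k^*)$ with $\beta_\ell=\max\{2^{-\ell\gamma},\tilde\varepsilon2^{-\ell(\gamma-\delta)}\}$ --- superpositions of rescaled bumps living on the intervals $S_\ell=\big((k^*-\psisupp)2^{-\ell},(k^*+\psisupp)2^{-\ell}\big)$, which are pairwise disjoint since $k^*>3\psisupp$ and whose union lies in $\big(0,(k^*+\psisupp)2^{-\underline\ell}\big)\subseteq(0,a)$, disjoint from the support $[a,b]$ of $f$. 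Because on any interval this series (and the formal series of its first $\lfloor\gamma\rfloor$ derivatives) has at most one nonzero summand away from the origin and the summands decay near the origin, it defines a $C^{\lfloor\gamma\rfloor}$ function and the term-by-term computations below are legitimate.

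\emph{Step (i).} Differentiating $\lfloor\gamma\rfloor$ times gives $\tilde g_{\underline\ell,\gamma,A}^{(\lfloor\gamma\rfloor)}(x)=A\sum_\ell 2^{-\ell(\gamma-\lfloor\gamma\rfloor)}\psi^{(\lfloor\gamma\rfloor)}(2^\ell x-k^*)$, still supported on the disjoint $S_\ell$. For a pair $x,x'$ I would distinguish the case that both lie in one $S_\ell$ --- where, bounding $|\psi^{(\lfloor\gamma\rfloor)}(2^\ell x-k^*)-\psi^{(\lfloor\gamma\rfloor)}(2^\ell x'-k^*)|$ by $\|\psi^{(\lfloor\gamma\rfloor+1)}\|_\infty 2^\ell|x-x'|$ and using $2^\ell|x-x'|\le2\psisupp$ to convert the linear bound to a $(\gamma-\lfloor\gamma\rfloor)$-H\"older bound --- from the case that at least one of $x,x'$ lies outside the $S_\ell$ containing the other, where each of $|\tilde g_{\underline\ell,\gamma,A}^{(\lfloor\gamma\rfloor)}(x)|,|\tilde g_{\underline\ell,\gamma,A}^{(\lfloor\gamma\rfloor)}(x')|$ is bounded by $A\|\psi^{(\lfloor\gamma\rfloor+1)}\|_\infty$ times the distance from the point to the boundary of its $S_\ell$ (using that $\psi^{(\lfloor\gamma\rfloor)}$ vanishes at the edge of its support), a distance at most $\min\{|x-x'|,\psisupp2^{-\ell}\}$, again giving a $(\gamma-\lfloor\gamma\rfloor)$-H\"older bound. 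In either case the bound is at most $\overline C_{K,\psi,\gamma}A$, so $\tilde g_{\underline\ell,\gamma,A}\in\FHol(\gamma,\overline C_{K,\psi,\gamma}A)$; the identical argument for $\tilde f_{\underline\ell,\gamma,\delta,\tilde\varepsilon,A}$, with $\lfloor\gamma-\delta\rfloor$ and $\beta_\ell\le2^{-\ell(\gamma-\delta)}$ (valid as $\tilde\varepsilon\le1$, $\ell\ge1$), gives $\tilde f_{\underline\ell,\gamma,\delta,\tilde\varepsilon,A}\in\FHol(\gamma-\delta,\overline C_{K,\psi,\gamma-\delta}A)$. Since $f$ has H\"older seminorm at most $\tilde B$, the hypotheses $A^*\ge\overline C_{K,\psi,\gamma}A+\tilde B$ (resp.\ $A^*\ge\overline C_{K,\psi,\gamma-\delta}A+\tilde B$) yield $h\in\FHol(\gamma,A^*)$ (resp.\ $\FHol(\gamma-\delta,A^*)$).

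\emph{Step (ii).} Fix an integer $j\ge\underline\ell$, let $x^*$ attain $\underline C_{K,\psi}=\sup_x|K_0\psi(x)-\psi(x)|$ (so $|x^*|\le\psisupp+\Ksupp$), and set $t^*=(k^*+x^*)2^{-j}$. Using the translation invariance $K(y,u+k^*)=K(y-k^*,u)$ from (\ref{kernel_lower_bound_assump}) and a change of variables, $K_j[\psi(2^j\cdot-k^*)](t)=K_0\psi(2^jt-k^*)$, so the scale-$j$ bump contributes exactly $A\beta_j[K_0\psi(x^*)-\psi(x^*)]$ to $K_j\tilde h(t^*)-\tilde h(t^*)$, with $\beta_j=2^{-j\gamma}$ (resp.\ $\beta_j=\max\{2^{-j\gamma},\tilde\varepsilon2^{-j(\gamma-\delta)}\}$). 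The key point is that no other scale contributes: for $\ell\ne j$, the finite support of $K$ together with $k^*>4(\psisupp+\Ksupp)$ forces $\psi(2^\ell t^*-k^*)=0$ and $K_j[\psi(2^\ell\cdot-k^*)](t^*)=0$ --- split into $\ell>j$, where the support of $u\mapsto\psi(2^{\ell-j}u-k^*)$ (radius $\psisupp2^{j-\ell}$ about $k^*2^{j-\ell}$) is separated from $2^jt^*=k^*+x^*$ by more than $\Ksupp$, and $\ell<j$, where that support sits near $k^*2^{j-\ell}\ge2k^*$, even farther from $k^*+x^*$. The condition $2^{-\underline\ell}(k^*+\psisupp+\Ksupp)<a$, with $j\ge\underline\ell$, places $t^*$ and its $\Ksupp2^{-j}$-neighborhood to the left of $[a,b]$, so $f(t^*)=0$ and $K_jf(t^*)=0$; hence $K_jh(t^*)-h(t^*)=A\beta_j[K_0\psi(x^*)-\psi(x^*)]$ and $\sup_{t\in[0,1]}|K_jh(t)-h(t)|\ge A\beta_j\underline C_{K,\psi}$. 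For the first inclusion $\beta_j=2^{-j\gamma}$, so this is $\ge\varepsilon^*A^*2^{-j\gamma}$ by $\varepsilon^*\le\underline C_{K,\psi}A/A^*$; for the second $\beta_j\ge\tilde\varepsilon2^{-j(\gamma-\delta)}$, so it is $\ge\varepsilon^*A^*2^{-j(\gamma-\delta)}$ by $\varepsilon^*\le\tilde\varepsilon\,\underline C_{K,\psi}A/A^*$. Since $j\ge\underline\ell$ was arbitrary, (ii) holds and both inclusions follow.

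\emph{Main obstacle.} The delicate step is the support bookkeeping in Step (ii) showing that only the scale-$j$ bump survives under $K_j$ at $t^*$: this requires carefully tracking how $K_j$ dilates and translates the supports of the remaining bumps relative to $t^*$, separately for coarser and finer scales, to see that $k^*>4(\psisupp+\Ksupp)$ makes all of them miss $t^*$ and a $\Ksupp2^{-j}$-neighborhood of it. A related subtlety is that $|x^*|$ may be as large as $\psisupp+\Ksupp$, so keeping $t^*$ and that neighborhood clear of $[a,b]$ uses that $\underline\ell$ need only be ``large enough'' (the stated condition on $\underline\ell$ may be taken with a bounded factor to spare). Step (i) is routine but must be organized so the two cases combine into the stated constant, with the integer-$\gamma$ case $\gamma-\lfloor\gamma\rfloor=1$ included.
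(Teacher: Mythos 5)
Your proof is correct and follows essentially the same strategy as the paper's: it reduces both inclusions to checking (i) the H\"older bound via a two-case analysis on a generic pair $(x,x')$ keyed to the disjoint supports of the rescaled bumps, and (ii) the lower bias bound by showing that at scale $j$ only the $\ell=j$ bump contributes, using the support separation guaranteed by $k^*>4(\psisupp+\Ksupp)$ together with the translation equivariance of $K$. The paper factors these into three helper lemmas (\ref{support_lemma}, \ref{bias_lower_bound_lemma_new}, \ref{wavelet_expansion_holder_lemma_new}/\ref{holder_sum_lemma_new}) and bounds the bias by restricting $K_jf-f$ to the open interval $\tilde S_{jj}$ and taking a supremum there, whereas you evaluate at a single point $t^*=(k^*+x^*)2^{-j}$ chosen to witness $\underline C_{K,\psi}$; these are equivalent, since on $\tilde S_{jj}$ the residual $K_jf-f$ coincides with that of the isolated bump. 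Your observation that the stated condition on $\underline\ell$ needs a bounded factor to spare (to keep the $\Ksupp 2^{-j}$-neighborhood of $t^*$ clear of $[a,b]$) is accurate and corresponds to the small mismatch between the $2\Ksupp$ in Lemma \ref{bias_lower_bound_lemma_new}'s hypothesis and the $\Ksupp$ in this lemma's hypothesis, which is why ``large enough'' is the operative phrase.
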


To prove Lemma \ref{tilde_g_tilde_f_lemma_new}, we first note some conditions on
the support of the functions $\psi_{\ell k^*}$ and their projections.

\begin{lemma}\label{support_lemma}
  If the support of a function $f$ is contained in $(c,d)$ for some $c,d$, then the support of
  $K_jf$ is contained in $(c-2^{-j}C_K,d+2^{-j}C_K)$.
  In particular, letting $\tilde S_{j \ell}=(2^{-\ell}k^*-2^{-\ell}\psisupp-2^{-j}\Ksupp,2^{-\ell}k^*+2^{-\ell}\psisupp+2^{-j}\Ksupp)$
  the support of $K_j\psi_{\ell k^*}$ is
  contained in $\tilde S_{j \ell}$, and the support of $\psi_{\ell k^*}$ is
  contained in $\tilde S_{\ell \ell}$.
  Furthermore, if $k^*>4(\psisupp+\Ksupp)$, then $\tilde S_{j j}\cap \tilde S_{j
  \ell}=\emptyset$ for $\ell \ne j$.
\end{lemma}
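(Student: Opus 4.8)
The plan is to establish the three assertions in order; each is an elementary support computation using only the finite-support part of (\ref{kernel_lower_bound_assump}).

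\emph{First assertion.} I would start from $K_jf(t)=\int K_j(t,x)f(x)\,dx=\int 2^jK(2^jt,2^jx)f(x)\,dx$. By (\ref{kernel_lower_bound_assump}), $K(2^jt,2^jx)=0$ whenever $|2^jx-2^jt|>\Ksupp$, i.e.\ whenever $|x-t|>2^{-j}\Ksupp$. Hence if $t\notin(c-2^{-j}\Ksupp,\,d+2^{-j}\Ksupp)$, then for every $x$ with $f(x)\neq 0$ (so $x\in(c,d)$) we have $|x-t|>2^{-j}\Ksupp$, whence the integrand vanishes identically and $K_jf(t)=0$. This gives the containment of the support of $K_jf$ in $(c-2^{-j}\Ksupp,\,d+2^{-j}\Ksupp)$.

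\emph{Second assertion.} Since $\psi$ is supported in $(-\psisupp,\psisupp)$, the function $\psi_{\ell k^*}(x)=2^{\ell/2}\psi(2^\ell x-k^*)$ is nonzero only when $|2^\ell x-k^*|<\psisupp$, i.e.\ on the interval with center $2^{-\ell}k^*$ and half-length $2^{-\ell}\psisupp$, which is contained in $\tilde S_{\ell\ell}$ because $\Ksupp\ge 0$. Applying the first assertion with $c=2^{-\ell}k^*-2^{-\ell}\psisupp$ and $d=2^{-\ell}k^*+2^{-\ell}\psisupp$ then yields that the support of $K_j\psi_{\ell k^*}$ is contained in $(c-2^{-j}\Ksupp,\,d+2^{-j}\Ksupp)=\tilde S_{j\ell}$.

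\emph{Third assertion.} Here I would view $\tilde S_{jj}$ as the open interval with center $2^{-j}k^*$ and half-length $2^{-j}(\psisupp+\Ksupp)$, and $\tilde S_{j\ell}$ as the open interval with center $2^{-\ell}k^*$ and half-length $2^{-\ell}\psisupp+2^{-j}\Ksupp$; two such intervals are disjoint as soon as the distance between their centers is at least the sum of their half-lengths. Treat $\ell>j$ and $\ell<j$ separately. If $\ell>j$, the center separation is $k^*(2^{-j}-2^{-\ell})=k^*2^{-j}(1-2^{-(\ell-j)})\ge k^*2^{-j}/2$ since $\ell-j\ge1$, while the sum of half-lengths is $2^{-\ell}\psisupp+2^{-j}\Ksupp+2^{-j}(\psisupp+\Ksupp)\le 2^{-j}\cdot 2(\psisupp+\Ksupp)$; the hypothesis $k^*>4(\psisupp+\Ksupp)$ forces $k^*2^{-j}/2> 2^{-j}\cdot 2(\psisupp+\Ksupp)$, so the center separation strictly exceeds the sum of half-lengths and the intervals are disjoint. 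The case $\ell<j$ is identical after replacing $2^{-j}$ by $2^{-\ell}$ throughout (both $2^{-\ell}\psisupp$ and $2^{-j}\Ksupp$ are then bounded by $2^{-\ell}$ times the relevant constant). Hence $\tilde S_{jj}\cap\tilde S_{j\ell}=\emptyset$ whenever $\ell\neq j$.

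None of these steps is a real obstacle; the only place that requires care is the third assertion, where one must check that both orderings of $j$ and $\ell$ reduce, via the geometric bound $1-2^{-|\ell-j|}\ge 1/2$, to precisely the stated condition $k^*>4(\psisupp+\Ksupp)$, and that the inequalities are kept strict so the open intervals genuinely do not touch.
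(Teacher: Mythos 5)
Your proof is correct and follows essentially the same route as the paper's: the first two assertions are the same direct support computations, and for the third you verify disjointness via a centers-and-half-lengths criterion (using $1-2^{-|\ell-j|}\ge 1/2$) where the paper instead compares the endpoints of $\tilde S_{jj}$ and $\tilde S_{j\ell}$ directly, arriving at the slightly sharper per-case sufficient conditions $k^*>3\psisupp+4\Ksupp$ and $k^*>3\psisupp+2\Ksupp$ before noting both are implied by $k^*>4(\psisupp+\Ksupp)$. The difference is purely one of bookkeeping, not a different argument.
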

\begin{proof}
  The first statement is immediate from the fact that
  $K_j(y,x)=2^jK(2^jy,2^jx)=0$ whenever $|x-y|>2^{-j}C_K$.
  The second statement then follows since the support of
  $\psi_{\ell k^*}$ is contained in $(2^{-\ell}k^*-2^{-\ell}\psisupp,2^{-\ell}k^*+2^{-\ell}\psisupp)$
  by the support condition on $\psi$.
  To verify the last statement, note that,
  for any $\ell\ge j+1$, elements in $\tilde S_{j \ell}$ are less than
  $2^{-j-1}k^*+2^{-j-1}\psisupp+2^{-j}\Ksupp$,
  which is less than $2^{-j}k^*-2^{-j}\psisupp-2^{-j}\Ksupp$
  (the lower support point of $\tilde S_{j j}$) so long as
  $k^*>3\psisupp+4\Ksupp$, which is guaranteed by the condition
  $k^*>4(\psisupp+\Ksupp)$.
  For any $\ell\le j-1$, elements in $\tilde S_{j, \ell}$ are greater than
  $2^{-j+1}k^*-2^{-j+1}\psisupp-2^{-j}\Ksupp$, which is greater than
  $2^{-j}k^*+2^{-j}\psisupp+2^{-j}\Ksupp$
  (the upper support point of $\tilde S_{j j}$) so long as
  $k^*>3\psisupp+2\Ksupp$, which is guaranteed by the condition
  $k^*>4(\psisupp+\Ksupp)$.
\end{proof}

We now use this to obtain a lower bound on projection bias.

\begin{lemma}\label{bias_lower_bound_lemma_new}
  Suppose that $K(y,x)$ satisfies (\ref{kernel_lower_bound_assump}), and let $f_{\{\tilde\beta\},\underline\ell}$
  be defined as in (\ref{f_tilde_beta_eq_new}), with $k^*>4(\psisupp+\Ksupp)$.
  Let $f^*$ be a function supported on the set
  $(2^{-\underline\ell}(k^*+\psisupp+2\Ksupp),\infty)$, and let $f=f_{\{\tilde\beta\},\underline\ell}+f^*$. 
  Then, for $j\ge \underline\ell$,
  \begin{align*}
    \sup_{x\in[0,2^{-j}(k^*+\psisupp+\Ksupp)]}|K_jf(x)-f(x)|\ge |\tilde \beta_j|\cdot 2^{j/2}\sup_{x\in\mathbb{R}}|K_0\psi(x)-\psi(x)|.
  \end{align*}
\end{lemma}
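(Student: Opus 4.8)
The plan is to reduce the claimed bound to a statement on the single interval $\tilde S_{jj}=(2^{-j}(k^*-\psisupp-\Ksupp),\,2^{-j}(k^*+\psisupp+\Ksupp))$, on which $K_jf-f$ coincides with $\tilde\beta_j(K_j\psi_{jk^*}-\psi_{jk^*})$, and then to unfold the definitions of $\psi_{jk^*}$ and $K_j$ using the shift‑invariance and scaling of $K$. First I would note that, since $k^*>4(\psisupp+\Ksupp)$ forces the left endpoint $2^{-j}(k^*-\psisupp-\Ksupp)$ to be positive, $\tilde S_{jj}\subseteq[0,2^{-j}(k^*+\psisupp+\Ksupp)]$; hence the supremum over the larger interval dominates $\sup_{x\in\tilde S_{jj}}|K_jf(x)-f(x)|$, and it suffices to work on $\tilde S_{jj}$.

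Next I would show that on $\tilde S_{jj}$ only the $\ell=j$ term matters. Write $f=\tilde\beta_j\psi_{jk^*}+h$ with $h=\sum_{\ell\ge\underline\ell,\,\ell\ne j}\tilde\beta_\ell\psi_{\ell k^*}+f^*$, so that $K_jf-f=\tilde\beta_j(K_j\psi_{jk^*}-\psi_{jk^*})+(K_jh-h)$, and I would check that $h$ and $K_jh$ both vanish on $\tilde S_{jj}$. For $f^*$: since $j\ge\underline\ell$, the first part of Lemma \ref{support_lemma} puts the support of $K_jf^*$ inside $(2^{-\underline\ell}(k^*+\psisupp+2\Ksupp)-2^{-j}\Ksupp,\infty)\subseteq(2^{-j}(k^*+\psisupp+\Ksupp),\infty)$, disjoint from $[0,2^{-j}(k^*+\psisupp+\Ksupp)]\supseteq\tilde S_{jj}$; the same holds a fortiori for $f^*$ itself. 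For each $\ell\ne j$: Lemma \ref{support_lemma} places the support of $K_j\psi_{\ell k^*}$ in $\tilde S_{j\ell}$, which is disjoint from $\tilde S_{jj}$; and it places the support of $\psi_{\ell k^*}$ in $\tilde S_{\ell\ell}$, which is also disjoint from $\tilde S_{jj}$ — when $\ell>j$ because $\tilde S_{\ell\ell}\subseteq\tilde S_{j\ell}$, and when $\ell<j$ because $\tilde S_{jj}\subseteq\tilde S_{\ell j}$ so one applies the disjointness part of Lemma \ref{support_lemma} with the roles of $j$ and $\ell$ interchanged. (Here I would also remark that $f$ is bounded, since $\sup_\ell|\tilde\beta_\ell|2^{\ell/2}\|\psi\|_\infty<\infty$, and each $K_j(x,\cdot)$ is of bounded variation with compact support, so the term‑by‑term application of $K_j$ is justified.) Thus on $\tilde S_{jj}$ we have $|K_jf-f|=|\tilde\beta_j|\cdot|K_j\psi_{jk^*}-\psi_{jk^*}|$.

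Finally I would compute the surviving term. With $\psi_{jk^*}(x)=2^{j/2}\psi(2^jx-k^*)$ and $K_j(x,y)=2^jK(2^jx,2^jy)$, the substitution $v=2^jy-k^*$ together with $K(y,x)=K(y-k^*,x-k^*)$ gives $K_j\psi_{jk^*}(x)=2^{j/2}(K_0\psi)(2^jx-k^*)$, hence $K_j\psi_{jk^*}(x)-\psi_{jk^*}(x)=2^{j/2}[(K_0\psi)(2^jx-k^*)-\psi(2^jx-k^*)]$. As $x$ runs over $\tilde S_{jj}$, the argument $2^jx-k^*$ runs over $(-\psisupp-\Ksupp,\psisupp+\Ksupp)$, which by Lemma \ref{support_lemma} (applied with $j=0$ to $\psi$) contains the support of $K_0\psi-\psi$; therefore $\sup_{x\in\tilde S_{jj}}|K_j\psi_{jk^*}(x)-\psi_{jk^*}(x)|=2^{j/2}\sup_{x\in\mathbb{R}}|K_0\psi(x)-\psi(x)|$. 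Combining this with the previous two paragraphs yields $\sup_{x\in[0,2^{-j}(k^*+\psisupp+\Ksupp)]}|K_jf(x)-f(x)|\ge\sup_{x\in\tilde S_{jj}}|K_jf(x)-f(x)|=|\tilde\beta_j|\cdot2^{j/2}\sup_{x\in\mathbb{R}}|K_0\psi(x)-\psi(x)|$, which is the assertion.

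The only genuine work is the support bookkeeping in the second paragraph: verifying that every summand other than $\tilde\beta_j\psi_{jk^*}$, and its image under $K_j$, is identically zero on $\tilde S_{jj}$, which requires invoking the disjointness conclusion of Lemma \ref{support_lemma} in both index orders and using $j\ge\underline\ell$ to push the remote component $f^*$ (and $K_jf^*$) entirely to the right of $\tilde S_{jj}$. The localization step and the kernel‑unfolding computation are each one‑line consequences of the definitions.
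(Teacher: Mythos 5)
Your proof is correct and follows essentially the same route as the paper's: restrict to $\tilde S_{jj}$, invoke Lemma \ref{support_lemma} to kill every term except $\tilde\beta_j\psi_{jk^*}$ and its projection, and unfold $K_j$ by the scale/shift change of variables. You spell out the disjointness bookkeeping and the $f^*$ localization more explicitly than the paper (which compresses this into a single ``it follows from Lemma \ref{support_lemma}''), but the argument is the same; your parenthetical boundedness remark is not needed (pairwise disjoint supports already make the sums and integrals locally finite) and tacitly assumes $\sup_\ell|\tilde\beta_\ell|2^{\ell/2}<\infty$, which is not among the lemma's hypotheses.
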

\begin{proof}
It follows from Lemma \ref{support_lemma} that, for $x\in \tilde S_{j j}$, we
have $f(x)=\psi_{j k^*}(x)$ and $K_jf(x)=K_j\psi_{j k^*}(x)$, so that
\begin{align*}
  &\sup_{x\in [0,2^{-j}(k^*+\psisupp+\Ksupp)]} |K_j f(x)-f(x)|
  \ge \sup_{x\in \tilde S_{j j}}|K_j f(x)-f(x)|  \\
  &= |\tilde\beta_j| \sup_{x\in \mathbb{R}}|K_j\psi_{j k^*}(x)-\psi_{j k^*}(x)|
    = |\tilde\beta_j| \cdot 2^{j/2} \sup_{x\in \mathbb{R}}|K_0\psi(x)-\psi(x)|
\end{align*}
where the last step follows by using a change of variables to note that
$K_j\psi_{j k^*}(x)-\psi_{j k^*}(x)
    = 2^{j/2}\left[ K_0\psi(u-k^*) - \psi(u-k^*) \right]$.
\end{proof}

Next, we obtain a H\"{o}lder condition on functions of the form given in
(\ref{f_tilde_beta_eq_new}) using the rate of decay of the coefficients $\tilde\beta_\ell$.

\begin{lemma}\label{wavelet_expansion_holder_lemma_new}
Let $\gamma>0$ and suppose
that $\psi$ is $\lfloor \gamma \rfloor+1$ times
differentiable.
Let $A$ be given and let $f(x)=f_{\{\tilde\beta\},\underline\ell}(x)$ be given
by (\ref{f_tilde_beta_eq_new}) where $|\tilde\beta_\ell| \le A 2^{-\ell(\gamma+1/2)}$
for all $\ell$.
Then $f\in\FHol(\gamma,2 A \|\psi^{(\lfloor \gamma \rfloor+1)}\|_{\infty} (2C_{\psi})^{1-(\gamma-\lfloor \gamma \rfloor)})$.  
\end{lemma}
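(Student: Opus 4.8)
The claim is a Hölder bound for a function $f=\sum_{\ell\ge\underline\ell}\tilde\beta_\ell\psi_{\ell k^*}$ whose single–location wavelet-type coefficients decay like $|\tilde\beta_\ell|\le A2^{-\ell(\gamma+1/2)}$, asserting $f\in\FHol(\gamma,2A\|\psi^{(\lfloor\gamma\rfloor+1)}\|_\infty(2C_\psi)^{1-(\gamma-\lfloor\gamma\rfloor)})$. The strategy is entirely classical: bound the increment $|f^{(\lfloor\gamma\rfloor)}(t)-f^{(\lfloor\gamma\rfloor)}(t')|$ termwise, splitting the sum over resolution levels $\ell$ at the scale where $|t-t'|\asymp 2^{-\ell}$. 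Write $m=\lfloor\gamma\rfloor$ and $\theta=\gamma-m\in[0,1)$. Differentiating termwise, $f^{(m)}(x)=\sum_\ell\tilde\beta_\ell 2^{\ell(m+1/2)}\psi^{(m)}(2^\ell x-k^*)$, and likewise $f^{(m+1)}(x)=\sum_\ell\tilde\beta_\ell 2^{\ell(m+3/2)}\psi^{(m+1)}(2^\ell x-k^*)$ (termwise differentiation is justified because at each $x$ at most one term is nonzero, by Lemma \ref{support_lemma}, so there is no convergence issue at all — this is the key simplification coming from the single location $k^*$).

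First I would handle the "large $\ell$" (high-frequency) block directly: for a given pair $t,t'$, let $L$ be the integer with $2^{-L}\le|t-t'|<2^{-L+1}$ (if $|t-t'|\ge 2^{-\underline\ell}$ take $L=\underline\ell$). For $\ell\ge L$ I bound each increment crudely by $|\tilde\beta_\ell 2^{\ell(m+1/2)}|\,2\|\psi^{(m)}\|_\infty\le 2A\|\psi^{(m)}\|_\infty 2^{-\ell\theta}$ and, because at most one $\ell\ge L$ contributes at $t$ and at most one at $t'$, I actually only pick up at most two such terms, each of size $\le 2A\|\psi^{(m)}\|_\infty 2^{-L\theta}\le 2A\|\psi^{(m)}\|_\infty(2|t-t'|)^\theta$ — wait, that has a stray factor $2^\theta\le 2$; I will instead sum the geometric-type bound $\sum_{\ell\ge L}2^{-\ell\theta}\asymp 2^{-L\theta}/(1-2^{-\theta})$ when $\theta>0$, which gives the clean constant only after combining with the low-frequency block. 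For $\ell<L$ (the "coarse" block), I use the mean value theorem at scale of the $(m+1)$st derivative: $|\psi^{(m)}(2^\ell t-k^*)-\psi^{(m)}(2^\ell t'-k^*)|\le 2^\ell|t-t'|\,\|\psi^{(m+1)}\|_\infty$, so each such term contributes at most $|\tilde\beta_\ell|2^{\ell(m+3/2)}|t-t'|\,\|\psi^{(m+1)}\|_\infty\le A\|\psi^{(m+1)}\|_\infty 2^{\ell(1-\theta)}|t-t'|$, and $\sum_{\ell<L}2^{\ell(1-\theta)}\asymp 2^{L(1-\theta)}/(2^{1-\theta}-1)$, which combined with $|t-t'|<2^{-L+1}$ gives a bound of order $A\|\psi^{(m+1)}\|_\infty|t-t'|^\theta$. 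Adding the two blocks and tracking the geometric-series constants carefully — using $\|\psi^{(m)}\|_\infty\le(2C_\psi)\|\psi^{(m+1)}\|_\infty$ (since $\psi$ is supported in an interval of length $2C_\psi$, so $\|\psi^{(m)}\|_\infty\le(2C_\psi)\|\psi^{(m+1)}\|_\infty$ by integrating the derivative from an endpoint where $\psi^{(m)}$ vanishes) — is what produces the stated constant $2A\|\psi^{(m+1)}\|_\infty(2C_\psi)^{1-\theta}$, with the $(2C_\psi)^{1-\theta}$ interpolation exponent emerging from optimizing the split between the two blocks rather than from a fixed threshold $L$; I would phrase the coarse/fine cutoff in terms of comparing $|t-t'|$ to $2^{-\ell}$ scaled by $2C_\psi$.

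The one genuine subtlety — and the step I expect to be the main obstacle — is the borderline case $\theta=0$, i.e. $\gamma=m$ an integer with $\lfloor\gamma\rfloor$ understood as the greatest integer strictly less than $\gamma$, so $m=\gamma-1$ and $\theta=\gamma-\lfloor\gamma\rfloor=1$; here the "coarse block" sum $\sum_{\ell<L}2^{\ell(1-\theta)}=\sum_{\ell<L}1$ diverges in $L$ and the naive bound fails. In that case $\theta=1$ and the constant is $2A\|\psi^{(m+1)}\|_\infty$, and the right argument is different: one uses the single-location structure to note $f^{(m)}(x)=\tilde\beta_{\ell(x)}2^{\ell(x)(m+1/2)}\psi^{(m)}(2^{\ell(x)}x-k^*)$ where $\ell(x)$ is the unique active level (if any), and controls $|f^{(m)}(t)-f^{(m)}(t')|$ by the single-term Lipschitz estimate when $t,t'$ are at the same level, plus a telescoping/triangle-inequality argument across an interval spanned by finitely many disjoint supports — here the disjointness from Lemma \ref{support_lemma} is essential so that between any two points the function $f^{(m)}$ is built from a controlled number of non-overlapping pieces. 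So the plan is: (i) state the termwise derivative formulas and invoke Lemma \ref{support_lemma} for local finiteness; (ii) do the coarse/fine split for $0<\theta<1$ and sum the two geometric series, tracking constants and using $\|\psi^{(m)}\|_\infty\le 2C_\psi\|\psi^{(m+1)}\|_\infty$; (iii) treat $\theta=1$ separately via the disjoint-support telescoping argument. I would present (ii) in detail and note (iii) as the endpoint case requiring the support-disjointness input.
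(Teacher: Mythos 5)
Your plan diverges substantially from the paper's proof, and the main branch (ii) has a genuine gap. The paper does \emph{no} dyadic split at all. It relies on two observations: (a) each individual term $\tilde\beta_\ell\psi_{\ell k^*}$ already lies in $\FHol(\gamma, A\|\psi^{(\lfloor\gamma\rfloor+1)}\|_\infty(2C_\psi)^{1-(\gamma-\lfloor\gamma\rfloor)})$, because on its support $|x-x'|\le 2\cdot 2^{-\ell}C_\psi$, so $(2C_\psi)^{-1}2^\ell|x-x'|\le 1$, and after a mean value bound one inserts $u\le u^{\gamma-\lfloor\gamma\rfloor}$ for $u\in[0,1]$ to get exactly the exponent $(2C_\psi)^{1-(\gamma-\lfloor\gamma\rfloor)}$; and (b) Lemma~\ref{holder_sum_lemma_new}, which states that a sum of Hölder functions with pairwise disjoint supports is Hölder with the constant merely doubled. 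That lemma encodes precisely the vanishing-at-boundary / telescoping argument you invoke only at the very end for the endpoint case, and it is the whole mechanism — it works uniformly for all $\theta=\gamma-\lfloor\gamma\rfloor\in(0,1]$.

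Your branch (ii) cannot produce the claimed constant. The geometric sums $\sum_{\ell\ge L}2^{-\ell\theta}$ and $\sum_{\ell<L}2^{\ell(1-\theta)}$ introduce factors $1/(1-2^{-\theta})$ and $1/(2^{1-\theta}-1)$, and the latter blows up as $\theta\to 1^-$, not only at the single point $\theta=1$; with the paper's convention that $\lfloor\gamma\rfloor$ is strictly less than $\gamma$, $\theta=1$ is the generic integer-$\gamma$ case and the stated constant is uniformly bounded in $\gamma\le\overline\gamma$, a fact used elsewhere (in defining $\overline C_{K,\psi,\gamma}$), so its exact form matters. No choice of threshold $L$ cures this degeneracy. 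Two further issues: first, you abandoned the ``at most two terms'' observation over a phantom stray $2^\theta$ — in fact $2^{-L}\le|t-t'|$ gives $2^{-L\theta}\le|t-t'|^\theta$ directly; second, and more importantly, bounding the high-frequency terms by $2\|\psi^{(\lfloor\gamma\rfloor)}\|_\infty$ discards the vanishing of $\psi^{(\lfloor\gamma\rfloor)}$ at the support boundary, which is exactly what lets one convert a ``magnitude'' estimate into an ``increment'' estimate and recover the right power of $|t-t'|$ with the right constant. The correct proof is essentially your (iii) applied for all $\gamma$: per-term Hölder estimate plus the disjoint-support sum lemma. The dyadic split is a detour that both misses the constant and breaks near the integer case.
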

\begin{proof}
Since the supports of the functions $\psi_{\ell k^*}$ do not overlap with each
other by Lemma \ref{support_lemma}, it follows from
Lemma \ref{holder_sum_lemma_new} below that it suffices to show that $x\mapsto
\tilde\beta_\ell \psi_{\ell k^*}(x)$ is in $\FHol(\gamma, A \|\psi^{(\lfloor \gamma \rfloor)+1}\|_{\infty}
(2C_{\psi})^{1-(\gamma-\lfloor \gamma \rfloor)})$ for each $\ell$.
Given $\ell$, let $x$ and $x'$ be in the support of $\psi_{\ell k^*}$ so that
$x,x'\in [2^{-\ell}k^* - 2^{-\ell}\psisupp, 2^{-\ell}k^* + 2^{-\ell}\psisupp]$.
Then
\begin{align*}
  &\left| \tilde\beta_{\ell}\psi_{\ell k^*}^{(\lfloor \gamma \rfloor)}(x) - \tilde\beta_{\ell}\psi_{\ell k^*}^{(\lfloor \gamma \rfloor)}(x') \right|
  = |\tilde\beta_{\ell}|2^{\ell(\lfloor \gamma \rfloor + 1/2)}\left| \psi^{(\lfloor \gamma \rfloor)}(2^\ell x + k) - \psi^{(\lfloor \gamma \rfloor)}(2^\ell x' + k) \right|  \\
  &\le \|\psi^{(\lfloor \gamma \rfloor+1)}\|_{\infty}\cdot |\tilde\beta_{\ell}|2^{\ell(\lfloor \gamma \rfloor + 1/2)} \cdot 2^{\ell}|x-x'|  \\
  &= \|\psi^{(\lfloor \gamma \rfloor+1)}\|_{\infty}\cdot |\tilde\beta_{\ell}|2^{\ell(\lfloor \gamma \rfloor + 1/2)}\cdot (2C_{\psi}) \cdot (2C_{\psi})^{-1}2^{\ell}|x-x'|  \\
  &\le \|\psi^{(\lfloor \gamma \rfloor+1)}\|_{\infty}\cdot |\tilde\beta_{\ell}|2^{\ell(\lfloor \gamma \rfloor + 1/2)}\cdot (2C_{\psi}) \cdot (2C_{\psi})^{-(\gamma-\lfloor \gamma \rfloor)}2^{\ell(\gamma-\lfloor \gamma \rfloor)}|x-x'|^{\gamma-\lfloor \gamma \rfloor} 
\end{align*}
where the last inequality uses the fact that $(2C_{\psi})^{-1}2^{\ell}|x-x'|\le
1$ by the conditions on $x,x'$. 
If $|\tilde \beta_{\ell}|\le A 2^{-\ell(\gamma+1/2)}$, then this is bounded by
$A \|\psi^{(\lfloor \gamma \rfloor+1)}\|_{\infty} (2C_{\psi})^{1-(\gamma-\lfloor \gamma \rfloor)}|x-x'|^{\gamma-\lfloor \gamma \rfloor}$
as required.
\end{proof}

We have used the following lemma.

\begin{lemma}\label{holder_sum_lemma_new}
  Let $\{g_k\}_{k=1}^\infty$ be a sequence of functions with nonoveralapping support with
  $g_k\in \FHol(\gamma,B)$ for each $k$.  Let $f=\sum_{k=1}^\infty g_k$.  Then
  $f\in \FHol(\gamma, 2B)$.
\end{lemma}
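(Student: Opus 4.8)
The plan is to prove the statement by induction on $m:=\lfloor\gamma\rfloor$, carrying along the stronger inductive claim that $f$ is $m$ times differentiable with $f^{(i)}=\sum_k g_k^{(i)}$ for all $i\le m$ and that $f^{(m)}=\sum_k g_k^{(m)}$ is $(\gamma-m)$-H\"older with constant $2B$. Write $\alpha=\gamma-m\in(0,1]$ and let $S_k=\mathrm{supp}(g_k)$; the hypothesis is that the $S_k$ have pairwise disjoint interiors (in the application of Lemma~\ref{wavelet_expansion_holder_lemma_new} they are genuinely disjoint closed sets by Lemma~\ref{support_lemma}), so at every point at most one $g_k$ is nonzero and each sum $\sum_k g_k^{(i)}$ is well defined pointwise. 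The one structural fact I will use repeatedly is that a function in $\FHol(\gamma,B)$ with support in $S_k$ has all derivatives through order $m$ vanishing off $\mathrm{int}(S_k)$: $g_k^{(m)}$ is continuous and vanishes on the open set $\mathbb{R}\setminus S_k$, hence on its closure $\mathbb{R}\setminus\mathrm{int}(S_k)$, so in particular at every boundary point of $S_k$. Also $g_j=0$ on $\mathrm{int}(S_k)$ for $j\ne k$ (disjoint interiors force $\mathrm{int}(S_k)\cap S_j=\emptyset$), so $f$ coincides with $g_k$ on $\mathrm{int}(S_k)$ and is zero off $\bigcup_k\mathrm{int}(S_k)$.

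\emph{Base case $m=0$.} Here I must show $|f(t)-f(t')|\le 2B|t-t'|^\alpha$ for $t<t'$. By the remarks above, $f(t)=g_k(t)$ for the unique $k$ (if any) with $t\in\mathrm{int}(S_k)$ and $f(t)=0$ otherwise, and similarly at $t'$. If the same $g_k$ is active at $t$ and $t'$, or if $f$ vanishes at one of the two points, one inserts a point of $[t,t']$ at which the relevant $g_k$ vanishes (a boundary point of $S_k$, chosen as $\sup\{u\in[t,t']:u\in S_k\}$, or the endpoint itself) and the single H\"older bound for $g_k$ gives $|f(t)-f(t')|\le B|t-t'|^\alpha$. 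If distinct $g_k$ and $g_j$ are active at $t$ and $t'$, pick $s_1=\sup\{u\in[t,t']:u\in S_k\}<t'$ and $s_2=\inf\{u\in[t,t']:u\in S_j\}>t$, which lie on $\partial S_k$ and $\partial S_j$, so $g_k(s_1)=g_j(s_2)=0$ and
\begin{align*}
|f(t)-f(t')|\le|g_k(t)-g_k(s_1)|+|g_j(s_2)-g_j(t')|\le B|t-t'|^\alpha+B|t-t'|^\alpha=2B|t-t'|^\alpha .
\end{align*}
This last case is exactly where the constant $2B$ (rather than $B$) is needed.

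\emph{Inductive step.} Assume the claim for exponent $\gamma-1$ (so for $\lfloor\gamma-1\rfloor=m-1$). The functions $g_k'$ have supports contained in $S_k$ (hence still of disjoint interiors), are $(m-1)$ times differentiable, and $(g_k')^{(m-1)}=g_k^{(m)}$ is $\alpha$-H\"older with constant $B$, i.e.\ $g_k'\in\FHol(\gamma-1,B)$. By the inductive hypothesis $G:=\sum_k g_k'$ is $(m-1)$ times differentiable, $G^{(i)}=\sum_k g_k^{(i+1)}$ for $i\le m-1$, and $G^{(m-1)}=\sum_k g_k^{(m)}$ is $\alpha$-H\"older with constant $2B$; in particular $G$ is continuous. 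It remains to identify $G$ with $f'$: for any $t_0<t$,
\begin{align*}
f(t)-f(t_0)=\sum_k\bigl(g_k(t)-g_k(t_0)\bigr)=\sum_k\int_{t_0}^t g_k'(s)\,ds=\int_{t_0}^t\Bigl(\sum_k g_k'(s)\Bigr)ds=\int_{t_0}^t G(s)\,ds,
\end{align*}
the interchange being justified by Tonelli's theorem, since $\sum_k\int_{t_0}^t|g_k'(s)|\,ds=\int_{t_0}^t|G(s)|\,ds<\infty$ (disjoint supports, and $G$ continuous hence bounded on $[t_0,t]$). As $G$ is continuous, the fundamental theorem of calculus yields $f'=G$; therefore $f$ is $m$ times differentiable with $f^{(m)}=G^{(m-1)}$, which is $\alpha$-H\"older with constant $2B$, and $f^{(i)}=G^{(i-1)}=\sum_k g_k^{(i)}$ for $1\le i\le m$. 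This completes the induction.

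The only genuine obstacle is establishing differentiability of $f$ at points where the supports touch or accumulate — which really occurs in the intended application, where the supports $\tilde S_{\ell\ell}$ accumulate at the origin — and this is precisely what is handled by passing to derivatives and using the integral identity with Tonelli; the H\"older estimate itself is the elementary two-piece argument of the base case.
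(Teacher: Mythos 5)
Your proof is correct, and the core Hölder estimate — inserting a boundary point of the active support(s) and using that the top-order derivative of $g_k$ vanishes there, with the factor $2$ appearing precisely in the two-active-pieces case — is the same case split as in the paper's argument. What you add beyond the paper is the explicit verification that $f$ is in fact $\lfloor\gamma\rfloor$ times differentiable with $f^{(i)}=\sum_k g_k^{(i)}$, which the paper takes for granted by writing $f^{(\lfloor\gamma\rfloor)}(x)$ and immediately identifying it with $g_k^{(\lfloor\gamma\rfloor)}(x)$ on $\mathrm{supp}(g_k)$. Your induction on $m=\lfloor\gamma\rfloor$, with the Tonelli/fundamental-theorem step to justify $f'=\sum_k g_k'$, is a genuine tightening: as you note, in the intended application (Lemma~\ref{wavelet_expansion_holder_lemma_new} with the supports $\tilde S_{\ell\ell}$ from Lemma~\ref{support_lemma}) the supports accumulate at a point, so term-by-term differentiation is not automatic and the FTC identity is exactly the right tool to close the gap. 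The trade-off is length — the paper's version is a one-paragraph case analysis — but your version is the one that would survive scrutiny at the accumulation point, and both yield the same constant $2B$.

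One small point of hygiene: in your base case you allow the degenerate sub-case where both $t$ and $t'$ lie in $\mathrm{int}(S_k)$; there no boundary point need be inserted, and the single Hölder bound for $g_k$ gives $|f(t)-f(t')|\le B|t-t'|^\alpha$ directly, which you can state without reference to a $\sup$. This does not affect correctness, but would make that sentence cleaner. Also worth stating explicitly, since you lean on it in the Tonelli step, is that for $\gamma>1$ each $g_k'$ is continuous (it has a Hölder-continuous derivative of some order $\ge 0$), so $g_k'$ vanishes on $\mathbb{R}\setminus\mathrm{int}(S_k)$ and the sum defining $G$ has at most one nonzero term at every point; you do say this, but it is the load-bearing fact and deserves the emphasis you give the boundary-vanishing observation at the start.
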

\begin{proof}
Let $x,x'$ be given.  We need to show that $|f^{\lfloor \gamma
  \rfloor}(x)-f^{\lfloor \gamma \rfloor}(x')|\le 2B |x-x'|^{\gamma - \lfloor
  \gamma \rfloor}$.  If $x$ and $x'$ are both in the support of $g_k$ for some
$k$, or if $x$ and $x'$ are not in the support of $g_k$ for any $k$, then this
follows immediately.  If $x$ is in the support of $g_k$ and $x'$ is in the
support of $g_{k'}$ for some $k'\ne k$, let $\overline x$ denote the upper
endpoint of the support of $g_k$ and let $\underline x'$ denote the lower endpoint of
the support of $g_{k'}$, and assume without loss of generality that $\overline
x\le \underline x'$.  By the H\"{o}lder condition on $g_k$ and $g_{k'}$, we have
$g_k^{\lfloor \gamma \rfloor}(\overline x)=g_{k'}^{\lfloor \gamma
  \rfloor}(\underline x')=0$, so that
$|f^{\lfloor \gamma
  \rfloor}(x)-f^{\lfloor \gamma \rfloor}(x')|
=|g_k^{\lfloor \gamma \rfloor}(x) - g_k^{\lfloor \gamma \rfloor}(\overline x)
+ g_{k'}^{\lfloor \gamma \rfloor}(x) - g_{k'}^{\lfloor \gamma
  \rfloor}(\underline x')|
\le B|x-\overline x|^{\gamma - \lfloor \gamma \rfloor}+B|x'-\underline
x'|^{\gamma - \lfloor \gamma \rfloor}
\le 2B |x-x'|^{\gamma - \lfloor \gamma \rfloor}$.
Finally, if $x$ is in the support of some $g_k$ and $x'$ is not in the support
of $g_{k'}$ for any $k'$, then, letting $[\underline x,\overline x]$ denote the
support of $g_k$, 
$|f^{\lfloor \gamma
  \rfloor}(x)-f^{\lfloor \gamma \rfloor}(x')|
= |g_k^{\lfloor \gamma
  \rfloor}(x)|
\le B \min\{|x-\underline x|^{\gamma - \lfloor \gamma \rfloor}, |x-\overline
x|^{\gamma - \lfloor \gamma \rfloor}\}
\le B |x-x'|^{\gamma - \lfloor \gamma \rfloor}$.
\end{proof}

With these results in hand, we can now prove Lemma
\ref{tilde_g_tilde_f_lemma_new}.  
Let $f^*\in\widetilde{\mathcal{F}}(\gamma,\tilde B,a,b)$
and let $g=\tilde g_{\underline \ell,\gamma,A}+f^*$
and $f=\tilde f_{\underline \ell,\gamma,\delta,\tilde\varepsilon,A}+f^*$.
It follows from Lemma \ref{wavelet_expansion_holder_lemma_new} that
$\tilde g_{\underline \ell,\gamma,A}\in \FHol(\gamma,\overline
C_{K,\psi,\gamma}A)$
and 
$\tilde f_{\underline \ell,\gamma,\delta,\tilde\varepsilon,A}\in
\FHol(\gamma-\delta,\overline C_{K,\psi,\gamma-\delta}A)$.
Thus,
$g\in \FHol(\gamma,\overline
C_{K,\psi,\gamma}A+\tilde B)\subseteq \FHol(\gamma,A^*)$
for $A^*\ge \overline C_{K,\psi,\gamma}A+\tilde B$
and
$f\in \FHol(\gamma-\delta,\overline C_{K,\psi,\gamma-\delta}A+\tilde B)\subseteq
\FHol(\gamma-\delta, A^*)$
for
$A^*\ge \overline C_{K,\psi,\gamma-\delta}A+\tilde B$.
To verify the lower bound on bias,
note that, for $j\ge \underline \ell$, we
have, by 
Lemma \ref{bias_lower_bound_lemma_new},
$\sup_{x\in[0,1]}|K_jg(x)-g(x)|\ge A 2^{-j(\gamma+1/2)}\cdot
2^{j/2}\underline C_{K,\psi}=A 2^{-j\gamma}\underline
C_{K,\psi}=(\underline C_{K,\psi}A/A^*)\cdot A^*\cdot 2^{-j\gamma}$.
Thus, for
$A^*\ge \overline C_{K,\psi,\gamma}A+\tilde B$
and
$\varepsilon^*\le \underline C_{K,\psi}A/A^*$, we have 
$g\in \Fselfsim(\gamma, A^*, \varepsilon^*; K, \underline \ell)$
as required.
Similarly,
$\sup_{x\in[0,1]}|K_jf(x)-f(x)|\ge \tilde\varepsilon A 2^{-j(\gamma-\delta+1/2)}\cdot
2^{j/2}\underline C_{K,\psi}=\tilde\varepsilon A 2^{-j(\gamma-\delta)}\underline
C_{K,\psi}=\tilde\varepsilon (\underline C_{K,\psi}A/A^*)\cdot A^*\cdot
2^{-j(\gamma-\delta)}$,
so that, for
$A^*\ge \overline C_{K,\psi,\gamma-\delta}A+\tilde B$
and
$\varepsilon^*\le \tilde\varepsilon \underline C_{K,\psi}A/A^*$,
we have
$f\in \Fselfsim(\gamma-\delta, A^*,\varepsilon^*; K,\underline\ell)$
as required.

\subsection{Proofs of Theorems \ref{constant_adaptation_thm_general_kernel} and
  \ref{constant_adaptation_thm_alt_general_kernel}}\label{constant_adaptation_proof_sec}

To prove Theorem \ref{constant_adaptation_thm_general_kernel},
let $\tilde g_{\underline\ell,\gamma,A}$ be defined as in Section \ref{constructing_functions_sec_new} with
$k^*$ and $\underline\ell$ chosen so that $k^*>4(\psisupp+\Ksupp)$ and $2^{-\underline\ell}(k^*+\psisupp+\Ksupp)<1/2$,
and with $A=B/(2\max\{\overline C_{K,\psi,\gamma},1\})$.  By Lemma
\ref{tilde_g_tilde_f_lemma_new},
$\tilde g_{\underline\ell,\gamma,A}\in \Fselfsim(\gamma,B,\varepsilon';K,\underline\ell)$
so long as $\varepsilon'\le \underline C_{K,\psi}/(2\max\{\overline C_{K,\psi,\gamma},1\})$.
Let $\tilde B=\min\{\tilde\varepsilon^{-1}B,\overline B\}-\overline
C_{K,\psi,\gamma}A$ where $\tilde\varepsilon=2\varepsilon \max\{\overline
C_{K,\psi,\gamma},1\}/\underline C_{K,\psi}$.
Applying Lemma \ref{tilde_g_tilde_f_lemma_new} with $\min\{\tilde\varepsilon^{-1}B,\overline
B\}$ playing the role of $A^*$, we have
$\widetilde{\mathcal{F}}(\gamma,\tilde B,1/2,1)+\{\tilde
g_{\underline\ell,\gamma,A}\}\subseteq \Fselfsim(\gamma,\min\{\tilde\varepsilon^{-1}B,\overline B\},\varepsilon;K,\underline\ell)$,
where we use the fact that the choice of $\tilde\varepsilon$ guarantees
$\underline C_{K,\psi}A/A^* \ge \varepsilon$.
If $\eta_{K,\gamma}$ is small enough, then we will have
$\min\{\tilde\varepsilon^{-1}B,\overline B\}\in [\underline B,\overline B]$, so
that this implies $\widetilde{\mathcal{F}}(\gamma,\tilde B,1/2,1)+\{\tilde
g_{\underline\ell,\gamma,A}\}\subseteq \cup_{B'\in[\underline B,\overline B]}
\Fselfsim(\gamma,B',\varepsilon; K,\underline\ell)$.
Applying Lemma \ref{general_lower_bound_lemma}, it follows that
$R^*_{n,\alpha,\beta}(\Fselfsim(\gamma,B,\varepsilon';K,\underline\ell), 
\cup_{B'\in[\underline B,\overline
  B]}\Fselfsim(\gamma,B',\varepsilon;K,\underline \ell))$
is bounded from below by 
$(1+o(1))\tilde B^{1/(2\gamma+1)}\left(\sigma_n^2\log
  (1/\sigma_n)\right)^{\gamma/(2\gamma+1)}$
times a term that depends only on $\gamma$.
The result follows by noting that, if $\eta_{K,\gamma}$ is chosen small enough,
then $\tilde B$ is bounded from below by a constant times
$\min\{\varepsilon^{-1}B,\overline B\}$, where the constant depends only on
$\underline C_{K,\psi}$ and $\overline C_{K,\psi,\gamma}$.

To prove Theorem \ref{constant_adaptation_thm_alt_general_kernel},
we use similar arguments with the same function $\tilde
g_{\underline\ell,\gamma,A}$ (defined with $k^*$ and $\underline\ell$ chosen so that $k^*>4(\psisupp+\Ksupp)$ and $2^{-\underline\ell}(k^*+\psisupp+\Ksupp)<1/2$,
and with $A=B/(2\max\{\overline C_{K,\psi,\gamma},1\})$).
By Lemma
\ref{tilde_g_tilde_f_lemma_new},
$\tilde g_{\underline\ell,\gamma,A}\in
\Fselfsim(\gamma,B,b_1/B;K,\underline\ell)=\Fselfsimalt(\gamma,B,b_1;K,\underline\ell)$
so long as
$b_1/B\le \underline C_{K,\psi}/(2\max\{\overline C_{K,\psi,\gamma},1\})$.
Let 
$\tilde B=\overline B-\overline C_{K,\psi,\gamma}A=\overline B- B\overline
C_{K,\psi,\gamma}/( 2\max\{ \overline C_{K,\psi,\gamma}, 1 \} )$.
Applying Lemma \ref{tilde_g_tilde_f_lemma_new} with $\overline B$ playing the role of $A^*$, we have
$\widetilde{\mathcal{F}}(\gamma,\tilde B,1/2,1)+\{\tilde
g_{\underline\ell,\gamma,A}\}\subseteq \Fselfsim(\gamma,\overline B,b_1/\overline B;K,\underline\ell)=\Fselfsimalt(\gamma,\overline B,b_1;K,\underline\ell)$,
so long as $b_1\le \underline C_{K,\psi} A=\underline C_{K,\psi} B/(2\max\{\overline C_{K,\psi,\gamma},1\})$.
The result follows by applying Lemma \ref{general_lower_bound_lemma} and noting
that $\tilde B\ge \overline B/2$.

\subsection{Proof of Theorem \ref{exponent_adaptation_thm_general_kernel}}\label{exponent_adaptation_proof_sec}

To prove Theorem \ref{exponent_adaptation_thm_general_kernel},
let
$\overline C=\sup_{\gamma'\in(0,\overline\gamma]} \overline C_{K,\psi,\gamma'}$
and let
$A=1/(2\overline C)$
and
$\tilde\varepsilon=2\varepsilon \overline C/\underline C_{K,\psi}$.
Let
$\tilde g_{\underline\ell,\gamma,A}$
and
$\tilde
f_{\underline\ell,\gamma,\delta,\tilde\varepsilon,A}$
be defined as in
Section \ref{constructing_functions_sec_new} with
$k^*$ and $\underline\ell$ chosen so that $k^*>4(\psisupp+\Ksupp)$ and
$2^{-\underline\ell}(k^*+\psisupp+\Ksupp)<1/2$.
By Lemma
\ref{tilde_g_tilde_f_lemma_new},
we have
$\tilde
g_{\underline\ell,\gamma,A}\in\Fselfsim(\gamma,1,\varepsilon')\subseteq \Fselfsim(\gamma,1,\varepsilon)$ for any
$\varepsilon\le \varepsilon'\le \underline C_{K,\psi}/(2 \overline C)$
and
$\widetilde{\mathcal{F}}(\gamma-\delta,1/2,1/2,1)+
\{\tilde f_{\underline\ell,\gamma,\delta,\tilde\varepsilon,A}\}
\subseteq \Fselfsim(\gamma-\delta,1,\varepsilon)$.
Thus, applying Lemma \ref{general_lower_bound_lemma}, we have, for any positive
sequence $\delta_n\to 0$,
\begin{align*}%
R^*_{n,\alpha,\beta}&\left( \Fselfsim(\gamma,1,\varepsilon'), \cup_{\gamma'\in[\underline\gamma,\overline\gamma]}\Fselfsim(\gamma',1,\varepsilon) \right)   \\
    &\ge C(\gamma-\delta_n,1/2,\kappa)\left(\sigma_n^2\log
   (1/\sigma_n)\right)^{(\gamma-\delta_n)/(2(\gamma-\delta_n)+1)}(1+o(1)).
\end{align*}
so long as
\begin{align}\label{f_g_to_zero_eq}
  \|\tilde g_{\underline\ell,\gamma,A} - \tilde
f_{\underline\ell,\gamma,\delta_n,\tilde\varepsilon,A}\|/\sigma_n\to 0.
\end{align}
Since $C(\gamma-\delta_n,1/2,\kappa)$ is bounded from below by a positive
constant that depends only on $\overline\gamma$, it suffices to find a sequence
$\delta_n\to 0$ such that (\ref{f_g_to_zero_eq}) holds and
\begin{align}\label{delta_n_exponent_limit_eq}
  \liminf_{n\to\infty} \frac{\left(\sigma_n^2\log
   (1/\sigma_n)\right)^{(\gamma-\delta_n)/(2(\gamma-\delta_n)+1)}}{\left(\sigma_n^2\log
  (1/\sigma_n)\right)^{\gamma/(2\gamma+1)}}\ge c \cdot \varepsilon^{-1/(2\gamma+1)}
\end{align}
for some constant $c$ that depends only on $\overline \gamma$ and $K$.

Let $\delta_n=C_n/\log n$ where
$C_n=(1-b_n)(2\gamma+1)\log\tilde\varepsilon^{-1}$ with $b_n=1/(\log n)^{1/2}$.
First, note that 
$\|\tilde g_{\underline\ell,\gamma,A} - \tilde
f_{\underline\ell,\gamma,\delta,\tilde\varepsilon,A}\|^2$ is equal to $A^2$
times
\begin{align*}
\sum_{\ell=\tilde\ell}^\infty
 \left( \tilde\varepsilon 2^{-\ell(\gamma-\delta+1/2)} - 2^{-\ell(\gamma+1/2)} \right)^2
  = \sum_{\ell=\tilde\ell}^\infty
 2^{-\ell(2\gamma+1)}\left( \tilde\varepsilon 2^{\ell\delta} - 1 \right)^2
\end{align*}
where $\tilde\ell=\tilde\ell(\tilde\varepsilon,\delta)$ is the minimum value of $\ell\ge \underline\ell$ such that
$\tilde\varepsilon 2^{\ell\delta}> 1$
(here we use the fact that the support of $\psi_{\ell k^*}$ does not overlap
with the support of $\psi_{\ell' k^*}$ for $\ell\ne\ell'$ by Lemma \ref{support_lemma}).
The above display is bounded by
\begin{align*}
  \tilde\varepsilon^2 \sum_{\ell=\tilde\ell}^{\infty} 2^{-\ell(2(\gamma-\delta)+1)}
  = \tilde\varepsilon^2 \sum_{\ell=0}^{\infty} 2^{-(\ell+\tilde\ell)(2(\gamma-\delta)+1)}
  = \tilde\varepsilon ^22^{-\tilde\ell(2(\gamma-\delta)+1)}\sum_{\ell=0}^{\infty} 2^{-\ell(2(\gamma-\delta)+1)}.
\end{align*}
Note that $2^{-\tilde \ell}<\tilde\varepsilon^{1/\delta}$, so
$2^{-\tilde\ell(2(\gamma-\delta)+1)}< \tilde\varepsilon^{(2(\gamma-\delta)+1)/\delta}$.
From this and the fact that
$\sum_{\ell=0}^{\infty} 2^{-\ell(2(\gamma-\delta)+1)}\le \sum_{\ell=0}^{\infty}
2^{-\ell}=2$, it follows that the above display is bounded by
$2\tilde\varepsilon^{2+(2(\gamma-\delta)+1)/\delta}=2\tilde\varepsilon^{(2\gamma+1)/\delta}$.
Plugging in $\delta_n=C_n/\log n$, dividing by $\sigma_n^2$ and taking logs
gives
\begin{align*}
  &\log \left[ \| \tilde f_{\underline
 \ell,\gamma,\delta_n,\tilde\varepsilon,A} - \tilde g_{\underline
  \ell,\gamma,A} \|^2/\sigma_n^2 \right]
  \le \frac{2\gamma+1}{\delta_n}\log\tilde\varepsilon+\log 2-\log (\sigma^2/n)+\log A^2  \\
  &=\left( \frac{(2\gamma+1)\log\tilde\varepsilon}{C_n}+1 \right)\log n+\log (2A^2/\sigma^2)
   =\frac{-b_n}{1-b_n}\log n+\log (2A^2/\sigma^2)
\end{align*}
which diverges to $-\infty$, so that exponentiating gives a sequence that
converges to $0$.  Thus, (\ref{f_g_to_zero_eq}) holds for this sequence $\delta_n$.

To verify (\ref{delta_n_exponent_limit_eq}) for this sequence $\delta_n$, note
that
\begin{align*}
&\frac{\gamma-\delta_n}{2(\gamma-\delta_n)+1}
-\frac{\gamma}{2\gamma+1}
=-\frac{\delta_n}{[2(\gamma-\delta_n)+1](2\gamma+1)}
=-\frac{\delta_n}{(2\gamma+1)^2}(1+o(1)).
\end{align*}
Thus,
\begin{align*}
&(\sigma_n^2)^{\frac{\gamma-\delta_n}{2(\gamma-\delta_n)+1}
-\frac{\gamma}{2\gamma+1}}
=(\sigma_n^2)^{-\frac{\delta_n}{(2\gamma+1)^2}(1+o(1))}
=(1+o(1))
n^{\frac{\delta_n}{(2\gamma+1)^2}(1+o(1))}  \\
&=\exp\left(
\frac{\delta_n}{(2\gamma+1)^2}(1+o(1))\log n
\right).
\end{align*}
Since $\delta_n\log n\to (2\gamma+1)\log \tilde\varepsilon^{-1}$, this converges
to $\exp\left(\frac{(2\gamma+1)\log \tilde\varepsilon^{-1}}{(2\gamma+1)^2}\right)=\tilde\varepsilon^{-1/(2\gamma+1)}$.
For the other term in (\ref{delta_n_exponent_limit_eq}), we have
\begin{align*}
&[\log (1/\sigma_n)]^{\frac{\gamma-\delta_n}{2(\gamma-\delta_n)+1}
-\frac{\gamma}{2\gamma+1}}
  =[\log \sigma^{-1}+(1/2)\log n]^{\mathcal{O}(1/\log n)}  \\
&=\exp\left(\mathcal{O}(1/\log n)\log [\log \sigma^{-1}+(1/2)\log n]\right)
\end{align*}
which converges to one as $n\to\infty$.
Thus, for this sequence $\delta_n$, the left hand side of
(\ref{delta_n_exponent_limit_eq}) converges to
$\tilde\varepsilon^{-1/(2\gamma+1)}=(2\overline C/\underline
C_{K,\psi})^{-1/(2\gamma+1)}\varepsilon^{-1/(2\gamma+1)}$.
Since $(2\overline C/\underline C_{K,\psi})^{-1/(2\gamma+1)}$ is bounded from
below by a positive constant uniformly over $\gamma\le \overline\gamma$, it
follows that (\ref{delta_n_exponent_limit_eq}) holds for this sequence
$\delta_n$.  This completes the proof of Theorem \ref{exponent_adaptation_thm_general_kernel}.

\appendix

\section{Details for Section \ref{achieving_bounds_sec_main}}\label{achieving_bounds_sec_append}

This appendix provides details for the results in Section
\ref{achieving_bounds_sec_main}.

\subsection{Critical Value}\label{cval_sec_append}

The critical value $c(j)=\cvalconst \sigma_n
2^{j/2}\sqrt{j}$ is justified by the following lemma.

\begin{lemma}\label{cval_lemma}
  Let $c(j)=\cvalconst \sigma_n 2^{j/2}\sqrt{j}$ and suppose that
  (\ref{kernel_lower_bound_assump}) and 
  (\ref{kernel_upper_bound_assump}) hold.  Then, if $\cvalconst$ is larger than
  a constant that depends only on the kernel $K$, we will have, for any sequence
  $\underline\ell_n\to\infty$,
  \begin{align*}
    P\left( |\hat f(t,j)-K_jf(t)|\le c(j)\text{ all }t\in[0,1], j\ge \underline\ell_n \right)\to 1.
  \end{align*}
\end{lemma}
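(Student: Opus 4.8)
The plan is to control the supremum of the centered Gaussian process $t \mapsto \hat f(t,j) - K_j f(t) = \sigma_n \int K_j(t,x)\,dW(x)$ uniformly over $t \in [0,1]$ and over all resolution levels $j \ge \underline\ell_n$ by combining a level-by-level tail bound with a union bound over $j$. First I would fix $j$ and observe that $Z_j(t) := \sigma_n^{-1}(\hat f(t,j) - K_j f(t))$ is a mean-zero Gaussian process whose variance at each point is $\int K_j(t,x)^2\,dx = 2^j \int K(2^j t, u)^2\,du$, which is bounded by $2^j \sup_{s}\int K(s,x)^2\,dx =: 2^j V_K$ using the first part of (\ref{kernel_upper_bound_assump}) together with the periodicity in (\ref{kernel_lower_bound_assump}). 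The increments satisfy $E(Z_j(s) - Z_j(t))^2 = 2^j \int [K(2^j s, u) - K(2^j t, u)]^2\,du \lesssim 2^j (2^j|s-t|)^{\tau_K}$ by the second part of (\ref{kernel_upper_bound_assump}), so on $[0,1]$ the process has a modulus of continuity controlled by a power of $|s-t|$ with a constant that grows only polynomially in $2^j$.

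Next I would apply a standard Gaussian concentration / chaining bound (Borell--TIS together with Dudley's entropy bound, as in \cite{gine_mathematical_2015}, Ch. 2) to get, for each fixed $j$,
\begin{align*}
  P\left( \sup_{t\in[0,1]} |Z_j(t)| > 2^{j/2}\left(D_1\sqrt{j} + D_2 + u\right) \right) \le e^{-u^2/2}
\end{align*}
for constants $D_1, D_2$ depending only on $K$ (the $\sqrt{j}$ term absorbs the $\log$ of the metric entropy, which is of order $j$ because the covering number of $[0,1]$ at resolution scale is of order $2^j$ times a polynomial). Choosing $u = u_j$ proportional to $\sqrt{j}$ — say taking $\cvalconst$ large enough that $\cvalconst\sqrt{j} \ge D_1\sqrt{j} + D_2 + u_j$ with $u_j = \sqrt{2j(1+\rho)\log 2}$ for some $\rho > 0$ — gives $P(\sup_t |Z_j(t)| > \cvalconst 2^{j/2}\sqrt{j}) \le 2^{-j(1+\rho)}$. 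Summing over $j \ge \underline\ell_n$ yields $\sum_{j\ge\underline\ell_n} 2^{-j(1+\rho)} \le C_\rho 2^{-\underline\ell_n(1+\rho)} \to 0$ since $\underline\ell_n \to \infty$, and translating back via $|\hat f(t,j) - K_j f(t)| = \sigma_n |Z_j(t)|$ gives the claim. Note the bound does not depend on $f$ at all (the process is centered), so uniformity over the function class is automatic.

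The main obstacle I expect is making the entropy bound for $\sup_{t\in[0,1]}|Z_j(t)|$ genuinely uniform in $j$ — i.e.\ obtaining the clean form "constant times $2^{j/2}\sqrt{j}$" with the constant depending only on $K$ and not on $j$. This requires being careful that the polynomial-in-$2^j$ factors inside the metric entropy integral contribute only an additive $O(j)$ to the squared expected supremum (so an $O(\sqrt{j})$ term after taking square roots), rather than something worse; the finite-support and periodicity conditions in (\ref{kernel_lower_bound_assump}) are what keep the number of "active" translates bounded and make this work. A secondary technical point is handling the fact that $\hat f(t,j)$ is defined via a stochastic integral against $dY$, so one should first verify measurability and the Gaussian-process structure of $t\mapsto \int K_j(t,x)\,dW(x)$, which is routine given that $x\mapsto K(t,x)$ is of bounded variation for each $t$.
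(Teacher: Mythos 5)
Your proposal is correct in spirit and reaches the same conclusion, but the per-level supremum bound is obtained by a genuinely different route. The paper first rescales: using the translation-invariance $K(y-k,x-k)=K(y,x)$ from (\ref{kernel_lower_bound_assump}), it partitions $[0,1]$ into $2^j$ blocks of length $2^{-j}$ and notes that on each block the normalized process has exactly the same law as a fixed process $\mathbb{T}_n(\cdot,1)$ on $[0,1]$ (independent of $j$). It then invokes a single ready-made supremum tail bound (Piterbarg's Theorem 8.1, using only the $j$-free Hölder condition in (\ref{kernel_upper_bound_assump})), and a union bound over the $2^j$ blocks yields $2^j \cdot Cj^{1/\tau_K-1}e^{-j\cvalconst/C}$, which is summable in $j$. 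You instead run Dudley/Borell--TIS directly on the resolution-$j$ process and track how the entropy integral scales with $j$. Both routes give a bound of the form $2^j e^{-cj}$ and are summable; the paper's rescaling trick is cleaner precisely because the obstacle you flag --- $j$-uniformity of the chaining constants --- is dissolved at the outset by reducing to a single fixed Gaussian process, whereas your argument must carry polynomial-in-$2^j$ factors through the entropy integral and then check they only contribute $O(\sqrt j)$. One small point worth making explicit in your route: the increment bound $\|Z_j(s)-Z_j(t)\|_{L^2}^2 = 2^j\int[K(2^js,u)-K(2^jt,u)]^2\,du$ needs the Hölder condition in (\ref{kernel_upper_bound_assump}) for arguments $2^js,2^jt$ ranging over $[0,2^j]$, not just $[0,1]$; you invoke periodicity for the variance but the same extension is needed for the increments, which is exactly what the paper's block decomposition handles automatically.
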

\begin{proof}
Let $\mathbb{T}_n(t,j)=\sigma_n^{-1} 2^{-j/2} \left[ \hat f(t,j)-K_jf(t)
\right]=\int 2^{j/2}K(2^jt,2^jx)\, dW(x)$.  Note that the distribution of the process
$t\mapsto \mathbb{T}_n(2^{-j}(t+k))$ is the same for all $j,k,n$, since
  $cov\left( \mathbb{T}_n(2^{-j}(s+k),j), \mathbb{T}_n(2^{-j}(t+k),j) \right)
  =\int 2^jK(s+k,2^jx)K(t+k,2^jx)\, dx
  =\int K(s,u)K(t,u)\, du$,
using change of variables $u=2^jx-k$ and the fact that $K(t+k,u+k)=K(t,u)$.
Thus,
\begin{align*}
  &P\left( \sup_{t\in[0,1]} |\mathbb{T}_n(t,j)|>\cvalconst \sqrt{j} \right)
\le \sum_{k=0}^{2^j-1}P\left( \sup_{s\in[0,1]}
  |\mathbb{T}_n(2^{-j}(s+k),j)|>\cvalconst \sqrt{j} \right)  \\
&=2^j P\left( \sup_{t\in[0,1]}
  |\mathbb{T}_n(t,1)|>\cvalconst \sqrt{j} \right).
\end{align*}
By (\ref{kernel_upper_bound_assump}), we can apply
Theorem 8.1 in \cite{piterbarg_asymptotic_1996} to the process
$\mathbb{T}_n(t,1)$, which, along with
the tail bound $\Phi(-x)\le (x\sqrt{2\pi})^{-1}\exp\left( -x^2/2 \right)$
where $\Phi$ is the standard normal cdf,
gives the bound
$P\left( \sup_{t\in[0,1]} |\mathbb{T}_n(t,1)|>\cvalconst \sqrt{j} \right)\le
Cj^{1/\tau_K-1} \exp(-j\cvalconst/C)$
for some constant $C$ that depends only on the kernel $K$.
Thus,
\begin{align*}
&1-P\left( |\hat f(t,j)-K_jf(t)|\le c(j)\text{ all }t\in[0,1], j\ge \underline\ell_n \right)  \\
&\le \sum_{j=\underline\ell_n}^\infty 2^j P\left( \sup_{t\in[0,1]}
  |\mathbb{T}_n(t,1)|>\cvalconst \sqrt{j} \right)  \\
&\le \sum_{j=\underline\ell_n}^\infty 2^j C j^{1/\tau_K-1}\exp(-j\cvalconst/C)
= \sum_{j=\underline\ell_n}^\infty C j^{1/\tau_K-1}\exp(-j(\cvalconst/C-\log 2).
\end{align*}
For $\cvalconst>C\log 2$, this converges to $0$ as $n\to\infty$.

\end{proof}

\subsection{Confidence Interval for $\gamma$}\label{gamma_ci_sec_append}

We construct a confidence interval $[\hat\gamma_\ell,\hat\gamma_u]$ for $\gamma$,
which can be used in the confidence band described in Section
\ref{achieving_bounds_sec_main}.  The confidence interval covers $\gamma$ on the
event in (\ref{delta_coverage_eq}), so that the resulting cofidence band for $f$
contains $f$ on the event that (\ref{fxh_coverage_eq}) and
(\ref{delta_coverage_eq}) both hold.

Let
$\underline G(j_1,j_2)=\underline G(\varepsilon,\underline B,\overline
B,\underline\gamma,\overline\gamma,j_1,j_2)=\min_{B\in[\underline B,\overline
  B], \gamma\in [\underline\gamma,\overline\gamma]}B(\varepsilon -2^{-(j_2-j_1)\gamma})$
and
$\overline G(j_1,j_2)=\overline G(\underline B,\overline
B,\underline\gamma,\overline\gamma,j_1,j_2)=\max_{B\in[\underline B,\overline
  B], \gamma\in [\underline\gamma,\overline\gamma]}B(1 + 2^{-(j_2-j_1)\gamma})$.
Let
\begin{align*}
  \tilde\gamma_{\ell}(j_1,j_2)
  =\frac{\log_2 \underline G(j_1,j_2) - \log_2\left[ \hat\Delta(j_2,j_2)+\tilde c(j_1,j_2) \right]}{j_1}
\end{align*}
with the convention that $\tilde\gamma_{\ell}(j_1,j_2)=\underline\gamma$ when
$\underline G(j_1,j_2)\le 0$.
Let
\begin{align*}
  \tilde\gamma_u(j_1,j_2)
  =\frac{\log_2 \overline G(j_1,j_2) - \log_2\left[ \hat\Delta(j_2,j_2)-\tilde c(j_1,j_2) \right]}{j_1} 
\end{align*}
with the convention that $\tilde\gamma_u(j_1,j_2)=\overline\gamma$ when $\log_2\left[ \hat\Delta(j_2,j_2)-\tilde c(j_1,j_2) \right]\le 0$.
Let
\begin{align*}
  \hat\gamma_\ell=\max_{j\in\mathcal{J}_n}\tilde\gamma_\ell(j_1,j_2)
\text{ and }
\hat\gamma_u=\min_{j\in\mathcal{J}_n}\tilde\gamma_u(j_1,j_2).
\end{align*}
Then $\gamma\in[\hat\gamma_\ell,\hat\gamma_u]$ on the event in (\ref{delta_coverage_eq}). 
To see this, note that, 
by (\ref{delta_bound_eq}), we have, for all $j_1,j_2\in\mathcal{J}_n$
\begin{equation}\label{gamma_est_lower_bound_eq}
\begin{aligned}
  &2^{-j_1\gamma}\underline G(j_1,j_2)\le 2^{-j_1\gamma}B(\varepsilon -2^{-(j_2-j_1)\gamma})  
    \le \Delta(j_{1},j_{2};f)  \le \hat\Delta(j_1,j_2)+\tilde c(j_1,j_2),
\end{aligned}
\end{equation}
and
\begin{align*}
\hat\Delta(j_2,j_2)-\tilde c(j_1,j_2)
  \le \Delta(j_{1},j_{2};f)  
  \le 2^{-j_1\gamma}B(1 +2^{-(j_2-j_1)\gamma})  
  \le 2^{-j_1\gamma}\overline G(j_1,j_2).
\end{align*}
Taking logs and rearranging gives $\gamma\in [\tilde\gamma_\ell(j_1,j_2),\tilde\gamma_u(j_1,j_2)]$.
Note also that
\begin{align*}
  &\tilde\gamma_u(j_1,j_2)-\tilde\gamma_\ell(j_1,j_2)
  \le \frac{\log_2 \overline G(j_1,j_2)-\log_2 \underline G(j_1,j_2)}{j_1}
    + \frac{2\tilde c(j_1,j_2)}{j_1(\hat \Delta(j_1,j_2)-\tilde c(j_1,j_2))\log 2}  \\
  &\le \frac{\log_2 \overline G(j_1,j_2)-\log_2 \underline G(j_1,j_2)}{j_1}
    + \frac{2\tilde c(j_1,j_2)}{j_1(2^{-j_1\overline \gamma}\underline G(j_1,j_2)-2\tilde c(j_1,j_2))\log 2}
\end{align*}
where the first inequality uses $|\log a - \log b|\le |a-b|/\min\{a,b\}$ and the
second inequality uses (\ref{gamma_est_lower_bound_eq}).

Let $\tilde c(j_1,j_2)=\cvalconst \sigma_n 2^{j_1/2}\sqrt{j_1}+\cvalconst
\sigma_n 2^{j_2/2}\sqrt{j_2}$, so that Lemma \ref{cval_lemma} applies.
Let $j_1,j_2$ satisfy $j_1,j_2\to\infty$, $j_2-j_1\to\infty$, and $j_2/\log n\to
0$.  Then the above display is bounded by a constant times $j_1^{-1}$.  To see
this, note that 
$\underline G(j_1,j_2)$ and $\overline G(j_1,j_2)$ converge to positive
constants, and
$2^{j_1\overline\gamma}\tilde c(j_1,j_2)\to 0$ by the conditions on $j_1$ and $j_2$.

We collect these results in a theorem.

\begin{theorem}\label{gamma_ci_thm}
  Let $\hat\gamma_\ell$ and $\hat\gamma_u$ be given above.  Then, on the event
  in (\ref{delta_coverage_eq}), we have
  $\gamma\in[\hat\gamma_\ell,\hat\gamma_u]$ for
  $f\in\Fselfsim(\gamma,B,\varepsilon)$ with $B\in[\underline B,\overline B]$
  and $\gamma\in[\underline\gamma,\overline\gamma]$.
  Furthermore, if we take $\tilde c(j_1,j_2)=\cvalconst \sigma_n
  2^{j_1/2}\sqrt{j_1}+\cvalconst \sigma_n 2^{j_2/2}\sqrt{j_2}$ and
  $\mathcal{J}_n$ contains sequences
$j_1=j_{1,n}$ and $j_2=j_{2,n}$ which satisfy $j_1,j_2\to\infty$, $j_2-j_1\to\infty$, and $j_2/\log n\to
0$, then, for any sequence $r_n$ with $r_n\to 0$ and $r_n/j_1\to\infty$, we have
\begin{align*}
  \gamma-r_n\le \hat\gamma_\ell\le \gamma\le \hat\gamma_u
  \le \gamma+r_n
\end{align*}
with probability approaching one uniformly over
$\cup_{\gamma\in[\underline\gamma,\overline\gamma],B\in [\underline B,\overline
  B]}\FGN(\varepsilon,\varepsilon B,B)$.
\end{theorem}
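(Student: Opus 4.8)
The plan is to assemble the facts already established in the discussion preceding the theorem; the statement is essentially a corollary of those computations.

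\textbf{Coverage of $\gamma$.} First I would verify, for an arbitrary fixed pair $j_1,j_2\in\mathcal{J}_n$, that $\tilde\gamma_\ell(j_1,j_2)\le\gamma\le\tilde\gamma_u(j_1,j_2)$ on the event (\ref{delta_coverage_eq}). For the lower side, chain $2^{-j_1\gamma}\underline G(j_1,j_2)\le\Delta(j_1,j_2;f)\le\hat\Delta(j_1,j_2)+\tilde c(j_1,j_2)$: the first inequality combines the lower bound in (\ref{delta_bound_eq}), which applies since $f\in\FGN(\gamma,\varepsilon B,B)$ with $\gamma\in[\underline\gamma,\overline\gamma]$ and $B\in[\underline B,\overline B]$, with the definition of $\underline G$, while the second is exactly (\ref{delta_coverage_eq}); taking $\log_2$ and dividing by $j_1$ gives $\gamma\ge\tilde\gamma_\ell(j_1,j_2)$ when $\underline G(j_1,j_2)>0$, and the convention $\tilde\gamma_\ell(j_1,j_2)=\underline\gamma$ handles the case $\underline G(j_1,j_2)\le 0$ since $\gamma\ge\underline\gamma$. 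For the upper side, chain $\hat\Delta(j_1,j_2)-\tilde c(j_1,j_2)\le\Delta(j_1,j_2;f)\le 2^{-j_1\gamma}\overline G(j_1,j_2)$ analogously, using the upper bound in (\ref{delta_bound_eq}) and the convention $\tilde\gamma_u(j_1,j_2)=\overline\gamma\ge\gamma$ in the degenerate case. Since these hold for every pair, $\gamma$ dominates all the $\tilde\gamma_\ell(j_1,j_2)$ and is dominated by all the $\tilde\gamma_u(j_1,j_2)$, so $\hat\gamma_\ell\le\gamma\le\hat\gamma_u$, which is the first assertion.

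\textbf{Rate.} For the second assertion I would invoke Lemma \ref{cval_lemma}: with $c(j)=\cvalconst\sigma_n 2^{j/2}\sqrt{j}$, $\tilde c(j_1,j_2)=c(j_1)+c(j_2)$, and $\cvalconst$ large enough, the event (\ref{delta_coverage_eq}) holds with probability tending to one, and the bound in that lemma does not depend on $f$, so this statement is uniform over $\cup_{\gamma,B}\FGN(\varepsilon,\varepsilon B,B)$. On that event the first part gives $\hat\gamma_\ell\le\gamma\le\hat\gamma_u$, so it remains to control $\hat\gamma_u-\hat\gamma_\ell$. Since $\hat\gamma_u$ is a minimum over pairs and $\hat\gamma_\ell$ a maximum over pairs, $\hat\gamma_u-\hat\gamma_\ell\le\tilde\gamma_u(j_{1,n},j_{2,n})-\tilde\gamma_\ell(j_{1,n},j_{2,n})$ for the specified sequences, and the two-line estimate derived above the theorem bounds this by a constant times $j_{1,n}^{-1}$: the term $j_{1,n}^{-1}[\log_2\overline G(j_{1,n},j_{2,n})-\log_2\underline G(j_{1,n},j_{2,n})]$ is $O(j_{1,n}^{-1})$ because $j_{2,n}-j_{1,n}\to\infty$ forces $\underline G$ and $\overline G$ to positive limits, and the remaining term is $o(j_{1,n}^{-1})$ because $j_{2,n}/\log n\to 0$ forces $2^{j_{1,n}\overline\gamma}\tilde c(j_{1,n},j_{2,n})\to 0$. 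Hence, on the event (\ref{delta_coverage_eq}), $\hat\gamma_u-\hat\gamma_\ell\le C/j_{1,n}<r_n$ for $n$ large (the hypothesis on $r_n$ yields $1/j_{1,n}=o(r_n)$), and combining with $\hat\gamma_\ell\le\gamma\le\hat\gamma_u$ gives $\gamma-r_n<\hat\gamma_\ell\le\gamma\le\hat\gamma_u<\gamma+r_n$; since (\ref{delta_coverage_eq}) holds with probability approaching one uniformly, so does this chain.

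\textbf{Main obstacle.} There is no genuine obstacle here, since the content was carried out in the preceding text; two minor points warrant care in the writeup: (a) confirming that the conventions in the definitions of $\tilde\gamma_\ell$ and $\tilde\gamma_u$ (for nonpositive $\underline G$ or nonpositive log-arguments) never break the coverage claim, which holds because in each such case the corresponding one-sided bound is vacuous given $\gamma\in[\underline\gamma,\overline\gamma]$, and (b) checking that the probability bound in Lemma \ref{cval_lemma} is truly uniform over the class, which it is because the Gaussian tail bound used in its proof makes no reference to $f$.
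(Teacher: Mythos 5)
Your proposal is correct and follows essentially the same route as the paper: verify $\tilde\gamma_\ell(j_1,j_2)\le\gamma\le\tilde\gamma_u(j_1,j_2)$ via (\ref{delta_bound_eq}) and (\ref{delta_coverage_eq}), then specialize to the sequences $j_{1,n},j_{2,n}$ and use the displayed estimate for $\tilde\gamma_u-\tilde\gamma_\ell$ together with Lemma \ref{cval_lemma} to get the $O(1/j_{1,n})$ width bound with high probability uniformly over the class. Your extra remarks about the conventions in the degenerate cases and about the uniformity of Lemma \ref{cval_lemma} are correct and make explicit what the paper leaves implicit.
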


\subsection{Length of the Confidence Band}\label{confidence_band_length_sec_append}

We now bound the length of this confidence band.
From (\ref{confidence_band_width_eq}), it follows that,
on the event
$\gamma-r_n\le \hat\gamma_\ell\le\gamma\le\hat\gamma_u\le \gamma+r_n$, the
length of the confidence band is bounded by
\begin{align*}
  \sup_{\gamma_u,\gamma_\ell\in [\gamma-r_n,\gamma+r_n]}
  \min_{j,j_1,j_2\in\mathcal{J}_n}  \left[
c(j) +\frac{B(2^{-j_1\gamma}+2^{-j_2\gamma})+2c(j_1)+2c(j_2)}{a(\varepsilon,j_1,j_2,j,\gamma_\ell,\gamma_u)} \right]
\end{align*}
where $c(j)=\cvalconst \sigma 2^{j/2}\sqrt{j/n}$.

It turns out that it will suffice to get an upper bound for the minimum in the
above display by taking
$j=j_{n,\gamma}=\lfloor \rho_\gamma + (2\gamma+1)^{-1} (\log_2 (n/\log_2 n))
\rfloor$,
$j_1=j_{1,n,\gamma}=j_{n,\gamma}-m_{1,n}$ and
$j_2=j_{2,n,\gamma}=j_{n,\gamma}-m_{2,n}$
where $m_{1,n}$ and $m_{2,n}$ are sequences such that
$m_{2,n}\to\infty$,
$m_{1,n}-m_{2,n}\to\infty$,
$r_n m_{1,n}\to 0$ and, for all $\gamma\in[\underline\gamma,\overline\gamma]$,
$j_{1,n,\gamma}\to \infty$
and $j_{2,n,\gamma}\to \infty$.
Applying the lemmas below gives the bound
\begin{align*}
  \left[ \frac{\overline c_K\sigma 2^{\rho_\gamma/2}}{(2\gamma+1)^{1/2}} 
  +B\varepsilon^{-1}2^{\gamma(1-\rho_\gamma)}  \right] (n/\log
n)^{-\gamma/(2\gamma+1)}[1+o(1)]
\end{align*}
where the $o(1)$ term is over
$\gamma\in[\underline\gamma,\overline\gamma]$, $B\in[\underline B,\overline B]$.
Setting
$\rho_\gamma = \log_2 \left(\sigma^{-1}
  B\varepsilon^{-1}\right)^{2/(2\gamma+1)}$
so that
$2^{\rho_\gamma/2}=\left(\sigma^{-1} B\varepsilon^{-1}\right)^{1/(2\gamma+1)}=\sigma^{2\gamma/(2\gamma+1)-1}\left(B\varepsilon^{-1}\right)^{1/(2\gamma+1)}$
gives
\begin{align*}
  \left[ \frac{\overline c_K}{(2\gamma+1)^{1/2}} 
    +2^{\gamma} \right]\sigma^{2\gamma/(2\gamma+1)}\left(B\varepsilon^{-1}\right)^{1/(2\gamma+1)}(n/\log
n)^{-\gamma/(2\gamma+1)}[1+o(1)].
\end{align*}
Since $\sigma_n^2\log (1/\sigma_n)=(\sigma^2/ n)\left( (1/2)\log n- \log \sigma
\right)=(1+o(1))(\sigma^2/2)(\log n)/n$, this gives
a bound of $(\sigma_n^2 \log (1/\sigma_n))^{\gamma/(2\gamma+1)}$ times a
constant that is bounded uniformly over $\gamma\le\overline\gamma$, as required.

\begin{lemma}
  \begin{align*}
    \sup_{\gamma\in[\underline\gamma,\overline\gamma]} \sup_{\gamma_\ell,\gamma_u\in[\gamma-r_n,\gamma+r_n]}
    \left| \frac{a(\varepsilon,j_{1,n,\gamma},j_{2,n,\gamma},j_{n,\gamma},\gamma_\ell,\gamma_u)}{a(\varepsilon,j_{1,n,\gamma},j_{2,n,\gamma},j_{n,\gamma},\gamma,\gamma)} - 1 \right|
    \to 0.
  \end{align*}
\end{lemma}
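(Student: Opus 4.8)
The plan is to evaluate $a(\varepsilon,j_{1,n,\gamma},j_{2,n,\gamma},j_{n,\gamma},\cdot,\cdot)$ in closed form and then reduce the claim to the elementary fact that $2^{m_{1,n}(\gamma_\ell-\gamma)}\to 1$ uniformly, which holds because $|\gamma_\ell-\gamma|\le r_n$ and $r_n m_{1,n}\to 0$. Since $j_{1,n,\gamma}-j_{n,\gamma}=-m_{1,n}$ and $j_{2,n,\gamma}-j_{n,\gamma}=-m_{2,n}$ with $m_{1,n},m_{2,n}>0$ for $n$ large, for $\gamma_\ell\le\gamma_u$ the inner maximum in the definition of $a$ equals $-m_{1,n}\gamma_\ell$ and the inner minimum equals $-m_{2,n}\gamma_u$, so that
\[
a(\varepsilon,j_{1,n,\gamma},j_{2,n,\gamma},j_{n,\gamma},\gamma_\ell,\gamma_u)=\max\{\varepsilon 2^{m_{1,n}\gamma_\ell}-2^{m_{2,n}\gamma_u},\,0\},
\]
and in particular the denominator equals $\max\{\varepsilon 2^{m_{1,n}\gamma}-2^{m_{2,n}\gamma},\,0\}$.

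First I would show that both of these are eventually strictly positive, uniformly over $\gamma\in[\underline\gamma,\overline\gamma]$ and $\gamma_\ell,\gamma_u\in[\gamma-r_n,\gamma+r_n]$. Writing $2^{m_{2,n}\gamma_u}/(\varepsilon 2^{m_{1,n}\gamma_\ell})=\varepsilon^{-1}2^{m_{2,n}(\gamma_u-\gamma_\ell)-(m_{1,n}-m_{2,n})\gamma_\ell}$ and using $\gamma_u-\gamma_\ell\le 2r_n$, $m_{2,n}\le m_{1,n}$, $r_n m_{1,n}\to 0$, and $\gamma_\ell\ge\underline\gamma-r_n$, the exponent is bounded above by $2r_n m_{1,n}-(m_{1,n}-m_{2,n})(\underline\gamma-r_n)$, which tends to $-\infty$ uniformly; hence this ratio tends to $0$ uniformly, and the same bound applies a fortiori when $\gamma_\ell=\gamma_u=\gamma$. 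Consequently, for $n$ large the denominator is nonzero and
\[
\frac{a(\varepsilon,j_{1,n,\gamma},j_{2,n,\gamma},j_{n,\gamma},\gamma_\ell,\gamma_u)}{a(\varepsilon,j_{1,n,\gamma},j_{2,n,\gamma},j_{n,\gamma},\gamma,\gamma)}=\frac{\varepsilon 2^{m_{1,n}\gamma_\ell}(1+o(1))}{\varepsilon 2^{m_{1,n}\gamma}(1+o(1))}=2^{m_{1,n}(\gamma_\ell-\gamma)}(1+o(1)),
\]
with the $o(1)$ terms uniform. Then I would finish by observing that $|m_{1,n}(\gamma_\ell-\gamma)|\le m_{1,n}r_n\to 0$, so $2^{m_{1,n}(\gamma_\ell-\gamma)}\to 1$ uniformly by continuity of $x\mapsto 2^x$ at $x=0$, whence the displayed double supremum of the absolute value tends to $0$.

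The algebraic simplification of $a$ and the ratio computation are routine; the only point requiring care is that every estimate be genuinely uniform over $\gamma$ and over the shrinking window $[\gamma-r_n,\gamma+r_n]$. The key such estimate is that $m_{2,n}(\gamma_u-\gamma_\ell)-(m_{1,n}-m_{2,n})\gamma_\ell\to-\infty$ uniformly, which rests on $r_n m_{1,n}\to 0$ (hence $r_n m_{2,n}\to 0$), $m_{1,n}-m_{2,n}\to\infty$, and $\gamma_\ell$ being bounded away from zero for large $n$; once this is established, the rest follows mechanically.
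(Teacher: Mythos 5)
Your proof is correct and follows essentially the same route as the paper's: both reduce $a(\varepsilon,j_{1,n,\gamma},j_{2,n,\gamma},j_{n,\gamma},\gamma_\ell,\gamma_u)$ to the closed form $\max\{\varepsilon 2^{m_{1,n}\gamma_\ell}-2^{m_{2,n}\gamma_u},0\}$, exploit the dominance of the first term via $m_{1,n}-m_{2,n}\to\infty$, and use $r_n m_{1,n}\to 0$ to control the ratio uniformly. The paper phrases the argument by sandwiching $a(\cdots,\gamma_\ell,\gamma_u)$ between the values at $\gamma\pm r_n$ and then dividing, whereas you isolate the $(1+o(1))$ structure of numerator and denominator directly and reduce to $2^{m_{1,n}(\gamma_\ell-\gamma)}\to 1$; these are the same idea written slightly differently, and your version makes the role of each hypothesis a bit more transparent.
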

\begin{proof}
For $n$ large enough, we have, for any
$\gamma\in[\underline\gamma,\overline\gamma]$ and $\gamma_\ell,\gamma_u$ with
$\gamma-r_n\le \gamma_\ell\le\gamma_u\le \gamma+r_n$,
\begin{align*}
  &\varepsilon 2^{m_{1,n}(\gamma-r_n)} - 2^{m_{2,n}(\gamma+r_n)}
  \le a(\varepsilon,j_{1,n,\gamma},j_{2,n,\gamma},j_{n,\gamma},\gamma_\ell,\gamma_u)
  \le \varepsilon 2^{m_{1,n}(\gamma+r_n)} - 2^{m_{2,n}(\gamma-r_n)}
\end{align*}
and
$a(\varepsilon,j_{1,n,\gamma},j_{2,n,\gamma},j_{n,\gamma},\gamma,\gamma)=\varepsilon
2^{m_{1,n}\gamma} - 2^{m_{2,n}\gamma}$.  Thus,
\begin{align*} &\frac{a(\varepsilon,j_{1,n,\gamma},j_{2,n,\gamma},j_{n,\gamma},\gamma_\ell,\gamma_u)}{a(\varepsilon,j_{1,n,\gamma},j_{2,n,\gamma},j_{n,\gamma},\gamma,\gamma)}
  \le \frac{\varepsilon 2^{m_{1,n}(\gamma+r_n)} - 2^{m_{2,n}(\gamma-r_n)}}{\varepsilon
2^{m_{1,n}\gamma} - 2^{m_{2,n}\gamma}}  \\
  &= \frac{2^{m_{1,n}r_n} - \varepsilon^{-1} 2^{-m_{2,n}r_n+(m_{2,n}-m_{1,n})\gamma}}{1 - \varepsilon^{-1} 2^{(m_{2,n}-m_{1,n})\gamma}}
\end{align*}
which converges to one uniformly over
$\gamma\in[\underline\gamma,\overline\gamma]$ by the conditions on $m_{1,n}$ and
$m_{2,n}$.  The result follows from this and a similar argument with the lower bound.
\end{proof}

\begin{lemma}
  \begin{align*}
    \frac{2^{-\gamma j_{1,n,\gamma}} + 2^{-\gamma j_{2,n,\gamma}}}{a(\varepsilon,j_{1,n,\gamma},j_{2,n,\gamma},j_{n,\gamma},\gamma,\gamma)}
    = 2^{-\gamma j_{n,\gamma}}\varepsilon^{-1}(1+o(1))
  \end{align*}
  where the $o(1)$ term is uniform over all $\gamma\in[\underline\gamma,\overline\gamma]$.
\end{lemma}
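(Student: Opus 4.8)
The plan is to substitute the explicit choices $j_{1,n,\gamma}=j_{n,\gamma}-m_{1,n}$ and $j_{2,n,\gamma}=j_{n,\gamma}-m_{2,n}$ into both the numerator and the denominator, factor out the dominant power of two, and then exploit that $m_{1,n}-m_{2,n}\to\infty$ with $\gamma$ bounded away from zero. First I would simplify the denominator: since $j_{1,n,\gamma}-j_{n,\gamma}=-m_{1,n}$ and $j_{2,n,\gamma}-j_{n,\gamma}=-m_{2,n}$, the definition of $a$ with $\gamma_\ell=\gamma_u=\gamma$ collapses to
\begin{align*}
  a(\varepsilon,j_{1,n,\gamma},j_{2,n,\gamma},j_{n,\gamma},\gamma,\gamma)
  =\max\left\{ \varepsilon 2^{m_{1,n}\gamma}-2^{m_{2,n}\gamma},\, 0 \right\}
  =\varepsilon 2^{m_{1,n}\gamma}\left( 1-\varepsilon^{-1}2^{-(m_{1,n}-m_{2,n})\gamma} \right),
\end{align*}
where the last equality holds once the bracket is positive.

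The key point is that $2^{-(m_{1,n}-m_{2,n})\gamma}\le 2^{-(m_{1,n}-m_{2,n})\underline\gamma}\to 0$, so for $n$ large the bracket is bounded away from $0$ uniformly over $\gamma\in[\underline\gamma,\overline\gamma]$ (in particular $a>0$, so the ``$+\infty$'' convention never enters), which gives $a=\varepsilon 2^{m_{1,n}\gamma}(1+o(1))$ with the $o(1)$ term uniform in $\gamma$; here I also use that $\varepsilon$ is a fixed positive constant. For the numerator, factoring out $2^{-\gamma j_{n,\gamma}}2^{m_{1,n}\gamma}$ gives
\begin{align*}
  2^{-\gamma j_{1,n,\gamma}}+2^{-\gamma j_{2,n,\gamma}}
  =2^{-\gamma j_{n,\gamma}}2^{m_{1,n}\gamma}\left( 1+2^{-(m_{1,n}-m_{2,n})\gamma} \right),
\end{align*}
and the same bound shows the final factor is $1+o(1)$ uniformly. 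Dividing the two displays, the factors $2^{m_{1,n}\gamma}$ cancel and one is left with $2^{-\gamma j_{n,\gamma}}\varepsilon^{-1}(1+o(1))$, which is the claim.

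There is essentially no obstacle here: the lemma is a routine asymptotic simplification, and the only thing needing attention is the uniformity of the $o(1)$ terms over $\gamma\in[\underline\gamma,\overline\gamma]$, which I would handle by bounding every vanishing quantity of the form $2^{-(m_{1,n}-m_{2,n})\gamma}$ by its value at $\gamma=\underline\gamma$. (The hypotheses $j_{1,n,\gamma}\to\infty$ and $j_{2,n,\gamma}\to\infty$ play no role in this particular lemma; they are carried along only for the companion estimates on $c(j)$ via Lemma \ref{cval_lemma}.)
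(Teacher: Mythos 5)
Your proof is correct and follows essentially the same route as the paper's: substitute $j_{1,n,\gamma}=j_{n,\gamma}-m_{1,n}$, $j_{2,n,\gamma}=j_{n,\gamma}-m_{2,n}$, factor out $2^{m_{1,n}\gamma}$ from both numerator and denominator, and use that $2^{-(m_{1,n}-m_{2,n})\gamma}\le 2^{-(m_{1,n}-m_{2,n})\underline\gamma}\to 0$ to obtain uniformity in $\gamma$. The paper merely packages the same algebra as a single ratio that is shown to tend to one, while you treat numerator and denominator separately; the side remarks (the $\max$/convention never binding, the irrelevance of $j_{1,n,\gamma},j_{2,n,\gamma}\to\infty$ here) are accurate but inessential.
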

\begin{proof}
We have 
\begin{align*}
  &\frac{2^{-\gamma j_{1,n,\gamma}} + 2^{-\gamma j_{2,n,\gamma}}}{2^{-\gamma j_{n,\gamma}}\varepsilon^{-1}a(\varepsilon,j_{1,n,\gamma},j_{2,n,\gamma},j_{n,\gamma},\gamma,\gamma)}
  =\frac{2^{-\gamma (j_{1,n,\gamma}-j_{n,\gamma})} + 2^{-\gamma (j_{2,n,\gamma}-j_{n,\gamma})}}{2^{m_{1,n}\gamma} - \varepsilon^{-1}2^{m_{2,n}\gamma}}  \\
  &=\frac{1 + 2^{-(m_{1,n}-m_{2,n})\gamma}}{1 - \varepsilon^{-1}2^{-(m_{1,n}-m_{2,n})\gamma}}
\end{align*}
which converges to one uniformly over
$\gamma\in[\underline\gamma,\overline\gamma]$ by the conditions on $m_{1,n}$ and $m_{2,n}$.
\end{proof}

\begin{lemma}
   If $\rho_{\gamma}$ is bounded over
  $\gamma\in[\underline\gamma,\overline\gamma]$, then 
  $c(j_{1,n,\gamma})/2^{-\gamma j_{1,n,\gamma}}\to 0$
  and $c(j_{2,n,\gamma})/2^{-\gamma j_{2,n,\gamma}}\to 0$ uniformly over $\gamma\in[\underline\gamma,\overline\gamma]$. 
  Furthermore, $c(j_{n,\gamma})\le \overline c_K\sigma
  2^{\rho_\gamma/2}(2\gamma+1)^{-1/2}(n/\log n)^{-\gamma/(2\gamma+1)}$ and
   $2^{-\gamma j_{n,\gamma}}\le 2^{\gamma(1-\rho_\gamma)}(n/\log_2 n)^{-\gamma/(2\gamma+1)}$.
\end{lemma}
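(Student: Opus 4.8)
The plan is purely computational: substitute $c(j)=\overline c_K\,\sigma\,2^{j/2}\sqrt{j/n}$ and the definitions $j_{n,\gamma}=\lfloor\rho_\gamma+(2\gamma+1)^{-1}\log_2(n/\log_2 n)\rfloor$, $j_{1,n,\gamma}=j_{n,\gamma}-m_{1,n}$, $j_{2,n,\gamma}=j_{n,\gamma}-m_{2,n}$, and then estimate everything with the elementary bounds $x-1\le\lfloor x\rfloor\le x$ (all estimates carried out for $n$ large enough, where $j_{1,n,\gamma},j_{2,n,\gamma}\ge 0$). The whole point of the standing hypothesis $\sup_{\gamma\in[\underline\gamma,\overline\gamma]}\rho_\gamma<\infty$, together with $\underline\gamma>0$, is that every constant produced along the way can be bounded uniformly in $\gamma$, so ``uniformly over $\gamma$'' comes for free; no earlier result in the paper is needed.

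For the two ratios, write $c(j)/2^{-\gamma j}=\overline c_K\,\sigma\,2^{(2\gamma+1)j/2}\sqrt{j/n}$. Plugging in $j=j_{1,n,\gamma}$ and using $j_{n,\gamma}\le\rho_\gamma+(2\gamma+1)^{-1}\log_2(n/\log_2 n)$ gives
\begin{align*}
  \frac{c(j_{1,n,\gamma})}{2^{-\gamma j_{1,n,\gamma}}}
  \le \overline c_K\,\sigma\,2^{(2\gamma+1)\rho_\gamma/2}\,
      \sqrt{\frac{j_{1,n,\gamma}}{\log_2 n}}\;
      2^{-(2\gamma+1)m_{1,n}/2}.
\end{align*}
Since $j_{1,n,\gamma}\le j_{n,\gamma}\le\sup_\gamma\rho_\gamma+(2\underline\gamma+1)^{-1}\log_2 n$, the factor $\sqrt{j_{1,n,\gamma}/\log_2 n}$ is bounded by a constant independent of $\gamma$ and $n$; boundedness of $\rho_\gamma$ bounds $2^{(2\gamma+1)\rho_\gamma/2}$; and $2^{-(2\gamma+1)m_{1,n}/2}\le 2^{-(2\underline\gamma+1)m_{1,n}/2}\to0$, because $m_{1,n}\ge m_{1,n}-m_{2,n}\to\infty$ forces $m_{1,n}\to\infty$. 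Hence the ratio tends to $0$ uniformly in $\gamma$. The bound for $c(j_{2,n,\gamma})/2^{-\gamma j_{2,n,\gamma}}$ is word for word the same with $m_{2,n}$ in place of $m_{1,n}$, now using $m_{2,n}\to\infty$.

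For the last two displays, split $c(j_{n,\gamma})=\overline c_K\,\sigma\cdot 2^{j_{n,\gamma}/2}\cdot\sqrt{j_{n,\gamma}/n}$. From $j_{n,\gamma}\le\rho_\gamma+(2\gamma+1)^{-1}\log_2(n/\log_2 n)$ one gets $2^{j_{n,\gamma}/2}\le 2^{\rho_\gamma/2}(n/\log_2 n)^{1/(2(2\gamma+1))}$, and from the same bound together with $\sup_\gamma\rho_\gamma<\infty$ one gets $\sqrt{j_{n,\gamma}/n}\le(1+o(1))(2\gamma+1)^{-1/2}\sqrt{(\log_2 n)/n}$ uniformly in $\gamma$; multiplying and using $\tfrac1{2(2\gamma+1)}-\tfrac12=-\tfrac{\gamma}{2\gamma+1}$ yields $c(j_{n,\gamma})\le(1+o(1))\,\overline c_K\,\sigma\,2^{\rho_\gamma/2}(2\gamma+1)^{-1/2}(n/\log_2 n)^{-\gamma/(2\gamma+1)}$, which is the asserted bound up to a $1+o(1)$ factor (uniform in $\gamma$) and the harmless replacement of $\log_2$ by $\log$ inside the rate. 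Finally, $2^{-\gamma j_{n,\gamma}}$ is bounded exactly: $\lfloor x\rfloor\ge x-1$ gives $j_{n,\gamma}\ge\rho_\gamma+(2\gamma+1)^{-1}\log_2(n/\log_2 n)-1$, hence $2^{-\gamma j_{n,\gamma}}\le 2^{\gamma(1-\rho_\gamma)}(n/\log_2 n)^{-\gamma/(2\gamma+1)}$, which is the displayed inequality. The only step needing any attention is verifying that each intermediate constant is genuinely independent of $\gamma\in[\underline\gamma,\overline\gamma]$; this is immediate from compactness of the interval, $\underline\gamma>0$, and the boundedness of $\rho_\gamma$, so there is no real obstacle.
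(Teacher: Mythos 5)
Your proof is correct and takes essentially the same route as the paper: substitute $c(j)=\cvalconst\sigma\,2^{j/2}\sqrt{j/n}$ and the floor-defined $j_{n,\gamma}$, use $x-1\le\lfloor x\rfloor\le x$, and bound everything uniformly via the boundedness of $\rho_\gamma$ and $\underline\gamma>0$. The only cosmetic differences are that the paper works with the square $c(j_{n,\gamma})^2/(\cvalconst\sigma)^2=2^{j_{n,\gamma}}j_{n,\gamma}/n$ rather than factoring $c(j_{n,\gamma})$ as you do, and that you make explicit the $(1+o(1))$ slack and the $\log$ vs.\ $\log_2$ discrepancy in the stated bound for $c(j_{n,\gamma})$ --- both of which are indeed present but harmless (the paper's own proof also only yields the bound with $\log_2$ and only for $n$ large enough that $\rho_\gamma\le(2\gamma+1)^{-1}\log_2\log_2 n$, and the downstream application carries a $[1+o(1)]$ factor).
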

\begin{proof}
We have
\begin{align*}
  &c(j_{n,\gamma})^2/(\overline c_K\sigma)^2= 2^{j_{n,\gamma}}j_{n,\gamma}/n  \\
   &= 2^{\lfloor \rho_\gamma + (2\gamma+1)^{-1} (\log_2 (n/\log_2 n))
 \rfloor} \lfloor (2\gamma+1)^{-1} (\log_2 n - \log_2 \log_2 n)
 \rfloor/n  \\
  &\le 2^{\rho_\gamma} 2^{(2\gamma+1)^{-1} (\log_2 (n/\log_2 n))} (2\gamma+1)^{-1} (\log_2 n)/n  
   =2^{\rho_\gamma}(2\gamma+1)^{-1}(n/\log_2 n)^{-2\gamma/(2\gamma+1)}.
\end{align*}
and
\begin{align*}
  &2^{-\gamma j_{n,\gamma}}
  = 2^{-\gamma \lfloor \rho_\gamma + (2\gamma+1)^{-1}\log _2 (n/\log_2 n) \rfloor}
  \le 2^{\gamma(1-\rho_\gamma)-\gamma (2\gamma+1)^{-1}\log _2 (n/\log_2 n)}  \\
  &= 2^{\gamma (1-\rho_\gamma)}(n/\log_2 n)^{\gamma/(2\gamma+1)}.
\end{align*} 
For any $m\ge \rho_\gamma$, we have
\begin{align*}
&c(j_{n,\gamma}-m)^2/(2^{-\gamma(j_{n,\gamma}-m)}\overline c_K\sigma)^2
  = 2^{(2\gamma+1)(j_{n,\gamma}-m)}(j_{n,\gamma}-m)/n  \\
  &\le 2^{\log_2(n/\log_2 n)-(m-\rho_{\gamma})(2\gamma+1)} (2\gamma+1)^{-1}(\log_2 n)/n
  = 2^{-(m-\rho_{\gamma})(2\gamma+1)} (2\gamma+1)^{-1}
\end{align*}
Setting $m=m_{1,n}\to\infty$ it follows that $c(j_{1,n,\gamma})/2^{-\gamma
  j_{1,n,\gamma}}\to 0$ uniformly over
$\gamma\in[\underline\gamma,\overline\gamma]$ and similarly for $j_{2,n,\gamma}$.

\end{proof}

\bibliography{../../../../../library}

\end{document}